\newcommand{\nc}{\newcommand}
\newcommand{\delete}[1]{}
\nc{\mlabel}[1]{\label{#1}}  
\nc{\mcite}[1]{\cite{#1}}  
\nc{\mref}[1]{\ref{#1}}  
\nc{\mbibitem}[1]{\bibitem{#1}} 
\nc{\mlabel}[1]{\label{#1}  
{\hfill \hspace{1cm}{\small\tt{{\ }\hfill(#1)}}}}
\nc{\mcite}[1]{\cite{#1}{\small{\tt{{\ }(#1)}}}}  
\nc{\mref}[1]{\ref{#1}{{\tt{{\ }(#1)}}}}  
\nc{\mbibitem}[1]{\bibitem[\bf #1]{#1}} 
\newtheorem{theorem}{Theorem}[section]
\newtheorem{prop}[theorem]{Proposition}
\newtheorem{lemma}[theorem]{Lemma}
\newtheorem{coro}[theorem]{Corollary}
\theoremstyle{definition}
\newtheorem{defn}[theorem]{Definition}
\newtheorem{prop-def}{Proposition-Definition}[section]
\newtheorem{remark}[theorem]{Remark}
\newtheorem{tempex}[theorem]{Example}
\newtheorem{tempexs}[theorem]{Examples}
\newtheorem{temprmk}[theorem]{Remark}
\newtheorem{tempexer}{Exercise}[section]
\nc{\vsa}{\vspace{-.1cm}} \nc{\vsb}{\vspace{-.2cm}}
\nc{\vsc}{\vspace{-.3cm}} \nc{\vsd}{\vspace{-.4cm}}
\nc{\vse}{\vspace{-.5cm}}
\nc{\NS}{U_{NS}}
\nc{\FN}{F_{\mathrm Nij}}
\nc{\dfgen}{V} \nc{\dfrel}{R}
\nc{\dfgenb}{\vec{v}} \nc{\dfrelb}{\vec{r}}
\nc{\dfgene}{v} \nc{\dfrele}{r}
\nc{\dfop}{\odot}
\nc{\dfoa}{\dfop^{(1)}} \nc{\dfob}{\dfop^{(2)}}
\nc{\dfoc}{\dfop^{(3)}} \nc{\dfod}{\dfop^{(4)}}
\nc{\mapm}[1]{\lfloor\!|{#1}|\!\rfloor}
\nc{\cmapm}[1]{\frakC(#1)}
\nc{\red}{\mathrm{Red}}
\nc{\cm}{C}
\nc{\supp}{\mathrm{Supp}}
\nc{\lex}{\mathrm{lex}}
\nc{\disp}[1]{\displaystyle{#1}}
\nc{\bin}[2]{ (_{\stackrel{\scs{#1}}{\scs{#2}}})}  
\nc{\binc}[2]{ \left (\!\! \begin{array}{c} \scs{#1}\\
    \scs{#2} \end{array}\!\! \right )}  
\nc{\bincc}[2]{  \left ( {\scs{#1} \atop
    \vspace{-.5cm}\scs{#2}} \right )}  
\nc{\sarray}[2]{\begin{array}{c}#1 \vspace{.1cm}\\ \hline
    \vspace{-.35cm} \\ #2 \end{array}}
\nc{\bs}{\bar{S}} \nc{\ep}{\epsilon}
\nc{\dbigcup}{\stackrel{\bullet}{\bigcup}}
\nc{\la}{\longrightarrow} \nc{\cprod}{\ast} \nc{\rar}{\rightarrow}
\nc{\dar}{\downarrow} \nc{\labeq}[1]{\stackrel{#1}{=}}
\nc{\dap}[1]{\downarrow \rlap{$\scriptstyle{#1}$}}
\nc{\uap}[1]{\uparrow \rlap{$\scriptstyle{#1}$}}
\nc{\defeq}{\stackrel{\rm def}{=}} \nc{\dis}[1]{\displaystyle{#1}}
\nc{\dotcup}{\ \displaystyle{\bigcup^\bullet}\ }
\nc{\sdotcup}{\tiny{ \displaystyle{\bigcup^\bullet}\ }}
\nc{\fe}{\'{e}}
\nc{\hcm}{\ \hat{,}\ } \nc{\hcirc}{\hat{\circ}}
\nc{\hts}{\hat{\shpr}} \nc{\lts}{\stackrel{\leftarrow}{\shpr}}
\nc{\denshpr}{\den{\shpr}}
\nc{\rts}{\stackrel{\rightarrow}{\shpr}} \nc{\lleft}{[}
\nc{\lright}{]} \nc{\uni}[1]{\tilde{#1}} \nc{\free}[1]{\bar{#1}}
\nc{\freea}[1]{\tilde{#1}} \nc{\freev}[1]{\hat{#1}}
\nc{\dt}[1]{\hat{#1}}
\nc{\wor}[1]{\check{#1}}
\nc{\intg}[1]{F_C(#1)}
\nc{\den}[1]{\check{#1}} \nc{\lrpa}{\wr} \nc{\mprod}{\pm}
\nc{\dprod}{\ast_P} \nc{\curlyl}{\left \{ \begin{array}{c} {} \\
{} \end{array}
    \right .  \!\!\!\!\!\!\!}
\nc{\curlyr}{ \!\!\!\!\!\!\!
    \left . \begin{array}{c} {} \\ {} \end{array}
    \right \} }
\nc{\longmid}{\left | \begin{array}{c} {} \\ {} \end{array}
    \right . \!\!\!\!\!\!\!}
\nc{\lin}{\call} \nc{\ot}{\otimes}
\nc{\ora}[1]{\stackrel{#1}{\rar}}
\nc{\ola}[1]{\stackrel{#1}{\la}}
\nc{\scs}[1]{\scriptstyle{#1}} \nc{\mrm}[1]{{\rm #1}}
\nc{\margin}[1]{\marginpar{\rm #1}}   
\nc{\dirlim}{\displaystyle{\lim_{\longrightarrow}}\,}
\nc{\invlim}{\displaystyle{\lim_{\longleftarrow}}\,}
\nc{\mvp}{\vspace{0.5cm}}
\nc{\mult}{m}       
\nc{\svp}{\vspace{2cm}} \nc{\vp}{\vspace{8cm}}
\nc{\proofbegin}{\noindent{\bf Proof: }}
\nc{\proofend}{$\blacksquare$ \vspace{0.5cm}}
\nc{\sha}{{\mbox{\cyr X}}}  
\nc{\ncsha}{{\mbox{\cyr X}^{\mathrm NC}}}
\newfont{\scyr}{wncyr10 scaled 550}
\nc{\ssha}{\mbox{\bf \scyr X}}
\nc{\ncshao}{{\mbox{\cyr
X}^{\mathrm NC,\,0}}}
\nc{\shpr}{\diamond}    
\nc{\shprc}{\shpr_c}
\nc{\shpro}{\diamond^0}    
\nc{\shpru}{\check{\diamond}} \nc{\spr}{\cdot}
\nc{\catpr}{\diamond_l} \nc{\rcatpr}{\diamond_r}
\nc{\lapr}{\diamond_a} \nc{\lepr}{\diamond_e} \nc{\sprod}{\bullet}
\nc{\un}{u}                 
\nc{\vep}{\varepsilon} \nc{\labs}{\mid\!} \nc{\rabs}{\!\mid}
\nc{\hsha}{\widehat{\sha}} \nc{\lsha}{\stackrel{\leftarrow}{\sha}}
\nc{\rsha}{\stackrel{\rightarrow}{\sha}} \nc{\lc}{\lfloor}
\nc{\rc}{\rfloor} \nc{\sqmon}[1]{\langle #1\rangle}
\nc{\altx}{\Lambda} \nc{\vecT}{\vec{T}} \nc{\piword}{{\mathfrak P}}
\nc{\lbar}[1]{\overline{#1}}
\nc{\dep}{\mathrm{dep}}
\nc{\mmbox}[1]{\mbox{\ #1\ }}
\nc{\ayb}{\mrm{AYB}} \nc{\mayb}{\mrm{mAYB}} \nc{\cyb}{\mrm{cyb}}
\nc{\ann}{\mrm{ann}} \nc{\Aut}{\mrm{Aut}} \nc{\cabqr}{\mrm{CABQR
}} \nc{\can}{\mrm{can}} \nc{\colim}{\mrm{colim}}
\nc{\Cont}{\mrm{Cont}} \nc{\rchar}{\mrm{char}}
\nc{\cok}{\mrm{coker}} \nc{\dtf}{{R-{\rm tf}}} \nc{\dtor}{{R-{\rm
tor}}}
\nc{\Div}{{\mrm Div}} \nc{\End}{\mrm{End}} \nc{\Ext}{\mrm{Ext}}
\nc{\FG}{\mrm{FG}} \nc{\Fil}{\mrm{Fil}} \nc{\Frob}{\mrm{Frob}}
\nc{\Gal}{\mrm{Gal}} \nc{\GL}{\mrm{GL}} \nc{\Hom}{\mrm{Hom}}
\nc{\hsr}{\mrm{H}} \nc{\hpol}{\mrm{HP}} \nc{\id}{\mrm{id}} \nc{\Id}{\mathrm{Id}}
\nc{\im}{\mrm{im}} \nc{\incl}{\mrm{incl}} \nc{\Loday}{\mrm{ABQR}\
} \nc{\length}{\mrm{length}} \nc{\LR}{\mrm{LR}} \nc{\mchar}{\rm
char} \nc{\pmchar}{\partial\mchar} \nc{\map}{\mrm{Map}}
\nc{\MS}{\mrm{MS}} \nc{\OS}{\mrm{OS}} \nc{\NC}{\mrm{NC}}
\nc{\rba}{\rm{Rota-Baxter algebra}\xspace}
\nc{\rbas}{\rm{Rota-Baxter algebras}\xspace}
\nc{\rbw}{\rm{RBW}\xspace}
\nc{\rbws}{\rm{RBWs}\xspace}
\nc{\rbadj}{\rm{RB}\xspace}
\nc{\mpart}{\mrm{part}} \nc{\ql}{{\QQ_\ell}} \nc{\qp}{{\QQ_p}}
\nc{\rank}{\mrm{rank}} \nc{\rcot}{\mrm{cot}} \nc{\rdef}{\mrm{def}}
\nc{\rdiv}{{\rm div}} \nc{\rtf}{{\rm tf}} \nc{\rtor}{{\rm tor}}
\nc{\res}{\mrm{res}} \nc{\SL}{\mrm{SL}} \nc{\Spec}{\mrm{Spec}}
\nc{\tor}{\mrm{tor}} \nc{\Tr}{\mrm{Tr}}
\nc{\mtr}{\mrm{tr}}
\nc{\ab}{\mathbf{Ab}} \nc{\Alg}{\mathbf{Alg}}
\nc{\Bax}{\mathbf{CRB}} \nc{\Algo}{\mathbf{Alg}^0}
\nc{\cRB}{\mathbf{CRB}} \nc{\cRBo}{\mathbf{CRB}^0}
\nc{\RBo}{\mathbf{RB}^0} \nc{\BRB}{\mathbf{RB}}
\nc{\Dend}{\mathbf{DD}} \nc{\bfk}{{\bf k}} \nc{\bfone}{{\bf 1}}
\nc{\base}[1]{{a_{#1}}} \nc{\Cat}{\mathbf{Cat}}
 \nc{\DN}{\mathbf{DN}}
\nc{\NA}{\mathbf{NA}}
\nc{\SDN}{\mathbf{SDN}}
\nc{\Diff}{\mathbf{Diff}} \nc{\gap}{\marginpar{\bf
Incomplete}\noindent{\bf Incomplete!!}
    \svp}
\nc{\FMod}{\mathbf{FMod}} \nc{\Int}{\mathbf{Int}}
\nc{\Mon}{\mathbf{Mon}}
\nc{\RB}{\mathbf{RB}} \nc{\remarks}{\noindent{\bf Remarks: }}
\nc{\Rep}{\mathbf{Rep}} \nc{\Rings}{\mathbf{Rings}}
\nc{\Sets}{\mathbf{Sets}} \nc{\bfx}{\mathbf{x}}
\nc{\BA}{{\Bbb A}} \nc{\CC}{{\Bbb C}} \nc{\DD}{{\Bbb D}}
\nc{\EE}{{\Bbb E}} \nc{\FF}{{\Bbb F}} \nc{\GG}{{\Bbb G}}
\nc{\HH}{{\Bbb H}} \nc{\LL}{{\Bbb L}} \nc{\NN}{{\Bbb N}}
\nc{\QQ}{{\Bbb Q}} \nc{\RR}{{\Bbb R}} \nc{\TT}{{\Bbb T}}
\nc{\VV}{{\Bbb V}} \nc{\ZZ}{{\Bbb Z}}
\nc{\cala}{{\mathcal A}} \nc{\calb}{{\mathcal B}}
\nc{\calc}{{\mathcal C}}
\nc{\cald}{{\mathcal D}} \nc{\cale}{{\mathcal E}}
\nc{\calf}{{\mathcal F}} \nc{\calg}{{\mathcal G}}
\nc{\calh}{{\mathcal H}} \nc{\cali}{{\mathcal I}}
\nc{\calj}{{\mathcal J}} \nc{\call}{{\mathcal L}}
\nc{\calm}{{\mathcal M}} \nc{\caln}{{\mathcal N}}
\nc{\calo}{{\mathcal O}} \nc{\calp}{{\mathcal P}}
\nc{\calr}{{\mathcal R}} \nc{\cals}{{\mathcal S}} \nc{\calt}{{\mathcal T}}
\nc{\calw}{{\mathcal W}} \nc{\calx}{{\mathcal X}}
\nc{\CA}{\mathcal{A}}
\nc{\frakA}{{\mathfrak A}}
\nc{\fraka}{{\mathfrak a}}
\nc{\frakB}{{\mathfrak B}}
\nc{\frakb}{{\mathfrak b}}
\nc{\frakC}{{\mathfrak C}}
\nc{\frakd}{{\mathfrak d}}
\nc{\frakF}{{\mathfrak F}}
\nc{\frakg}{{\mathfrak g}}
\nc{\frakm}{{\mathfrak m}}
\nc{\frakM}{{\mathfrak M}}
\nc{\frakMo}{{\mathfrak M}^0}
\nc{\frakP}{{\mathfrak P}}
\nc{\frakp}{{\mathfrak p}}
\nc{\frakS}{{\mathfrak S}}
\nc{\frakSo}{{\mathfrak S}^0}
\nc{\fraks}{{\mathfrak s}}
\nc{\os}{\overline{\fraks}}
\nc{\frakT}{{\mathfrak T}}
\nc{\frakTo}{{\mathfrak T}^0}
\nc{\oT}{\overline{T}}
\nc{\frakX}{{\mathfrak X}}
\nc{\frakXo}{{\mathfrak X}^0}
\nc{\frakx}{{\mathbf x}}
\nc{\frakTx}{\frakT}      
\nc{\frakTa}{\frakT^a}        
\nc{\frakTxo}{\frakTx^0}   
\nc{\caltao}{\calt^{a,0}}   
\nc{\ox}{\overline{\frakx}} \nc{\fraky}{{\mathfrak y}}
\nc{\frakz}{{\mathfrak z}} \nc{\oX}{\overline{X}} \font\cyr=wncyr10
\nc{\tred}[1]{\textcolor{red}{#1}}
\nc{\tblue}[1]{\textcolor{blue}{#1}} \nc{\li}[1]{\tred{Li:#1 }}
\nc{\xing}[1]{\tblue{Xing:#1 }}
\begin{document}

\title[Free commutative integro-differential algebras and Gr\"obner-Shirshov bases]{Construction of free commutative integro-differential algebras by the method of Gr\"{o}bner-Shirshov bases}

\author{Xing Gao}
\address{Department of Mathematics,
    Lanzhou University,
    Lanzhou, Gansu 730000, China}
\email{gaoxing@lzu.edu.cn}
\author{Li Guo}
\address{
Department of Mathematics and Computer Science,
Rutgers University,
Newark, NJ 07102, USA}
\email{liguo@rutgers.edu}
\author{Shanghua Zheng}
\address{Department of Mathematics,
    Lanzhou University,
    Lanzhou, Gansu 730000, China}
\email{zheng2712801@163.com}

\date{\today}

\begin{abstract}
In this paper, we construct a canonical linear basis for free commutative integro-differential algebras by applying the method of Gr\"obner-Shirshov bases. We establish the Composition-Diamond Lemma for free commutative differential Rota-Baxter algebras of order $n$. We also obtain a weakly monomial order on these algebras, allowing us to obtain Gr\"{o}bner-Shirshov bases for free commutative integro-differential algebras on a set.
We finally generalize the concept of functional derivations to free differential algebras with arbitrary weight and generating sets from which to construct a canonical linear basis for free commutative integro-differential algebras.
\end{abstract}

\subjclass[2010]{
16S15, 
13P10, 
16W99, 
17A50, 
12H05, 
47G20 
.}

\keywords{Differential algebra, Rota-Baxter algebra, integro-differential algebra, Gr\"obner-Shirshov basis, free algebra, shuffle product, mixable shuffle product.}

\maketitle

\tableofcontents

\setcounter{section}{0}

\section{Introduction}

\subsection{Integro-differential algebras}
The algebraic study in analysis has a long history. The first monograph~\mcite{Ri1} of Ritt on algebraic study of differential equations appeared almost one hundred years ago. The concept of a {\bf differential algebra} was abstracted from the Leibniz formula
\begin{equation}
 d(uv)=d(u)v+ud(v)
\mlabel{eq:diff}
\end{equation}
in calculus. After the fundamental works of Ritt~\mcite{Ri2} and Kolchin~\mcite{Ko}, the theory of differential algebra has been expanded to a vast area of pure and applied mathematical study~\mcite{CGKS,SP}.
The algebraic study of the integral analysis began with the concept of a Baxter algebra~\mcite{Ba}, later called a {\bf Rota-Baxter algebra}. Here the basis of abstraction is the integration by parts formula,
\begin{equation}
P(u)P(v)=P(uP(v))+P(P(u)v)+\lambda P(uv),
\mlabel{eq:rb0}
\end{equation}
rewritten in a form that only involves the integral operator $P$, defined by $P(u)(x):=\int_a^x u(t)\,dt$. The extra term parameterized by a constant $\lambda$ allows both the integral operator (when $\lambda=0$) and the summation operator (when $\lambda=1$), as well as quite a few other operators, to be encoded into one equation. Since then, Rota-Baxter algebra has found broad applications from combinatorics and number theory to classical Yang-Baxter equation and quantum field theory~\mcite{Bai,EGK,Guw,Gub,GZ,Ro1,Ro2,STS}.

Motivated by the close relationship between the differential and integral analysis as shown in the First Fundamental Theorem of Calculus, coordinated studied of differential algebra and Rota-Baxter algebra have emerged recently, beginning with the two simultaneously introduced concepts of a differential Rota-Baxter algebra and an integro-differential algebra.

The concept of a differential Rota-Baxter algebra~\mcite{GK3} is a simple coupling of a {\bf differential operator $d$ of weight $\lambda$}:
\begin{equation}
d(uv)=d(u)v+ud(v)+\lambda d(u)d(v), \ d(1)=0,
\mlabel{eq:diffl0}
\end{equation}
with a Rota-Baxter operator $P$ of the same weight by the abstraction of the First Fundamental Theorem of Calculus
\begin{equation}
d\circ P=\id,
\mlabel{eq:fft0}
\end{equation}
where $\id$ is the identity map. On the other hand, the concept of an {\bf integro-differential algebra}, first considered in the weight $0$ case in~\mcite{RR} and in the general weight case in~\mcite{GRR}, also takes into account the intertwining relationship of the two operators in the original definition of the integration by parts formula
\begin{equation}
P(d(u)P(v))=uP(v)-P(uv) - \lambda P(d(u)v).
\mlabel{eq:ibpl0}
\end{equation}
We note that Eq.~(\mref{eq:ibpl0}) implies Eq.~(\mref{eq:rb0}) at the presence of Eq.~(\mref{eq:fft0}) when $u$ is substituted by $P(u)$. Thus the variety of integro-differential algebras is the variety of differential Rota-Baxter algebras modulo extra conditions. See~\mcite{GRR} for further details.

As in the case of studying any algebraic structures, the free objects play an important role in the study of previous algebras. While the construction of free differential algebras is straightforward in terms of differential monomials, the construction of free Rota-Baxter algebras is more involved. In fact, there are three constructions in the commutative case, with the first one given by Rota~\mcite{Ro1} through an internal construction, and an external one given by Cartier~\mcite{Ca}. In~\mcite{GK1}, a construction is given by a generalization of the shuffle product, called the mixable shuffle product which is closely related to the quasi-shuffle product~\mcite{Ho} in the study of multiple zeta values.

By composing the construction of free differential algebras followed by that of the free Rota-Baxter algebras, free differential Rota-Baxter algebras were obtained in~\mcite{GK3}. Because of the more intimate relationship of the differential and Rota-Baxter operators in an integro-differential algebra, it is more challenging to construct free objects in the corresponding category even by the previous remark on the variety of integro-differential algebras, free integro-differential algebras are quotients of free differential Rota-Baxter algebras modulo the relation given by Eq.~(\mref{eq:ibpl0}). The first construction of free commutative integro-differential algebras was obtained in the recent paper~\mcite{GRR}. There the construction makes essential use of an equivalent formulation of the condition in Eq.~(\mref{eq:ibpl0}) for the integro-differential algebra.

\subsection{Gr\"obner-Shirshov bases}

In this paper, we apply the method of Gr\"obner-Shirshov bases to give another construction of the free commutative integro-differential algebras on a set.

The method of Gr\"obner bases or Gr\"obner-Shirshov bases originated from the work of Buchburger~\mcite{Bu} (for commutative polynomial algebras), Hironaka~\mcite{Hi} (for infinite series algebras) and Shirshov~\mcite{Sh} (for Lie algebras). It has since become a fundamental method in commutative algebra, algebraic geometry and computational algebra, and has been extended to many other algebraic structures, notably associative algebras~\mcite{Be,Bo}. In recent years, the method of Gr\"obner-Shirshov bases has been applied to a large number of algebraic structures to study problems on normal forms, word problems, rewriting systems, embedding theorems, extensions, growth functions and Hilbert series. See~\mcite{BCC,BCL,BLSZ} for further details.

This method also derives free objects in various categories, including the alternative constructions of free Rota-Baxter algebras and free differential Rota-Baxter algebras~\mcite{BCD,BCQ}. The basic idea is to prove a composition-diamond lemma that achieves a rewriting procedure to reduce any element to certain ``standard form". Then the set of elements in standard form is a basis of the free object.

We apply this method to construct a free commutative integro-differential algebra as the quotient of a free commutative differential Rota-Baxter algebra modulo the ``hybrid" integral by part formula in Eq.~(\mref{eq:ibpl0}). In order to do so, we would expect to first establish a Composition-Diamond Lemma for the free commutative differential Rota-Baxter algebra constructed in~\mcite{GK3}. We should then prove that the ideal generated by the defining relation of integro-differential algebras in Eq.~(\mref{eq:ibpl0}) has a Gr\"obner-Shirshov basis, thereby identifying a basis of a free commutative integro-differential algebra as a canonical subset of the known basis of the free commutative differential Rota-Baxter algebra. All these depend on the choice of a suitable monomial order on the set of the basis elements of the free commutative differential Rota-Baxter algebra. However a moment's thought reveals that such a monomial order does not exist for this algebra. To overcome this difficulty, we consider this algebra as a filtered algebra with respect to the order of derivation and study the filtration pieces first. Even there, we have to get along with a weakly monomial order which fortunately suffices for our applications. So we are able to adapt the above process of Gr\"obner-Shirshov bases and obtain a canonical basis for each of the filtration pieces. We then check that this process is compatible with the filtration structure, allowing us to put these canonical bases for the filtration pieces together to form a canonical basis for the entire free commutative integro-differential algebra. The following is our main theorem

\begin{theorem} $($=\text{Theorem~\mref{thm:gsb}}$)$
Let $X$ be a nonempty well-ordered set and $A:=\bfk\{X\}$. Let $\sha(\bfk\{X\})=\sha(\bfk[\Delta X])$, with the derivation $d$ and Rota-Baxter operator $P$, be the free commutative differential Rota-Baxter algebra of weight $\lambda$ on $X$. Let $I_{ID}$ be the differential Rota-Baxter ideal of $\sha(\bfk\{X\})$ generated by
$$S  := \{P(d(u) P(v))- uP(v)+ P(uv) + \lambda P(d(u) v) \mid u, v \in \sha(\bfk\{X\}) \}.
$$
Let $A_f$ be the submodule of $A=\bfk\{X\}$ spanned by functional monomials.
Then the composition
$$\sha(A)_f:=A \oplus \left(\bigoplus_{k\geq 0} A\otimes A_{f}^{\otimes k} \otimes A \right) \hookrightarrow  \sha(A) \to \sha(A) / I_{ID}$$
of the inclusion and the quotient map is a linear bijection.
Thus $\sha(A)_f$ gives an explicit construction of the free integro-differential algebra $\sha(A)/I_{ID}.$
\mlabel{thm:gsb0}
\end{theorem}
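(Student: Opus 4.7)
The plan is to realize $\sha(A)_f$ as the set of $S$-irreducible ``normal form'' monomials in $\sha(A)$, and then apply a Composition-Diamond Lemma to deduce that these normal forms descend to a $\bfk$-linear basis of the quotient $\sha(A)/I_{ID}$. Following the strategy outlined in the introduction, I first recall the filtration $\sha(A) = \bigcup_{n\geq 0}\sha(A^{(n)})$, where $A^{(n)}$ is the free commutative differential algebra of order $n$ on $X$ and $\sha(A^{(n)})$ is the corresponding free commutative differential Rota-Baxter algebra of order $n$. No global monomial order exists on $\sha(A)$ compatible with the Leibniz rule, but on each filtration piece $\sha(A^{(n)})$ the Composition-Diamond Lemma established earlier in the paper supplies a weakly monomial order on the mixable shuffle basis that suffices for Gr\"obner-Shirshov rewriting.

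Next I consider the restriction $S^{(n)}\subseteq \sha(A^{(n)})$ of the relation set $S$ to pairs $u,v\in\sha(A^{(n)})$. Under the chosen weakly monomial order, the leading monomial of each relation $P(d(u)P(v))-uP(v)+P(uv)+\lambda P(d(u)v)$ is the longest summand $P(d(u)P(v))$, which carries one more layer of Rota-Baxter nesting than the remaining three summands. The central technical step is to verify that $S^{(n)}$, possibly augmented by finitely many compositions, is a Gr\"obner-Shirshov basis in $\sha(A^{(n)})$; i.e., every inclusion and intersection ambiguity among the leading monomials $P(d(u)P(v))$ reduces to zero modulo $S^{(n)}$. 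Using the Leibniz rule for $d$, the Rota-Baxter identity for $P$, and the mixable shuffle product relations, these compositions should unfold into long but tractable cancellations.

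Once $S^{(n)}$ is known to be a Gr\"obner-Shirshov basis, the Composition-Diamond Lemma identifies its irreducible monomials as precisely those mixable shuffle tensors $a_0\otimes a_1\otimes\cdots\otimes a_m$ in $\sha(A^{(n)})$ with each interior factor $a_i$ ($1\leq i\leq m-1$) lying in the functional submodule $A^{(n)}_f$; equivalently, no $a_i$ contains a derivative factor that could be peeled off via integration by parts. This exhibits $\sha(A^{(n)})_f$ as a $\bfk$-linear complement of $I_{ID}\cap \sha(A^{(n)})$ inside $\sha(A^{(n)})$. Because the entire construction respects the inclusions $\sha(A^{(n)})\hookrightarrow \sha(A^{(n+1)})$, passing to the direct limit then yields the desired $\bfk$-linear bijection from $\sha(A)_f$ to $\sha(A)/I_{ID}$.

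The hardest step I anticipate is verifying the Gr\"obner-Shirshov basis property for $S^{(n)}$. Both the variety of ambiguities (where the commutative product, the differential $d$, and the Rota-Baxter operator $P$ interact simultaneously) and the fact that the order is only \emph{weakly} monomial (so leading terms cannot be tracked naively across subtractions) demand careful bookkeeping; in particular, when reducing a composition one must control the reappearance of the pattern $P(d(u)P(v))$ arising from the weight-$\lambda$ Leibniz expansion $d(u_1u_2)=d(u_1)u_2+u_1 d(u_2)+\lambda d(u_1)d(u_2)$. The filtration by order of derivation is exactly what makes this bookkeeping terminate inside each $\sha(A^{(n)})$, after which compatibility with the inclusions makes the passage to the full algebra routine.
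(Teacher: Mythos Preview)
Your proposal follows essentially the same strategy as the paper: establish a Composition-Diamond Lemma on each order-$n$ filtration piece $\sha(\bfk[\Delta_n X])$ with respect to a weakly monomial order, verify that the restricted relation set $S_n$ is already a Gr\"obner-Shirshov basis (no augmentation turns out to be needed), identify $\mathrm{Irr}(S_n)$ with the tensors whose interior factors are functional, and then pass to the direct limit over $n$. Two steps that the paper treats more carefully than your outline suggests are the ideal compatibility $I_{ID,n}=I_{ID}\cap\sha(\bfk[\Delta_n X])$ (this is not automatic from ``respects the inclusions'' and is proved as a separate lemma using the complements $J_n$ and the modular law) and, for infinite well-ordered $X$, a further reduction to finite subsets $Y\subseteq X$; both are genuine, if short, arguments rather than routine checks.
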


It is interesting to note that our approach of Gr\"obner-Shirshov bases gives a different construction of free commutative integro-differential algebras than those in~\mcite{GRR}. While the construction in~\mcite{GRR} has a transparent product formula, the construction here has a simple description as a submodule of the free differential Rota-Baxter algebra. By the uniqueness of the free objects, the two constructions yield isomorphic integro-differential algebras. Thus it would be interesting to compare the two constructions to reveal further the structure and properties of these free objects.

\subsection{Outline of the paper}
In Section~\mref{sec:prel}, we first introduce the algebraic structures that lead up to $\lambda$-integro-differential algebras and then recall the construction of free objects for these algebraic structures, in particular the free commutative Rota-Baxter algebras and the free commutative differential Rota-Baxter algebras. In Section~\mref{sec:mon}, we first give definitions related to differential Rota-Baxter monomials and then define a weakly monomial order on differential
Rota-Baxter monomials of order $n$. In Section~\mref{sec:cd}, we start with defining various kinds of
compositions and then establish the Composition-Diamond Lemma for the
$n$-th order free commutative differential Rota-Baxter algebra. In Section~\mref{sec:gs}, we consider a finite set $X$ and obtain a Gr\"obner-Shirshov basis for the defining ideal of a free commutative order $n$ integro-differential algebra on $X$ and thus obtain an explicitly defined basis for this free object. Then as mentioned above, we put the order $n$ pieces together as a direct system to obtain a basis for the free commutative integro-differential algebra on $X$. We then use a finiteness argument to treat the case when $X$ is any well-ordered set.

\section{Free commutative integro-differential algebras}
\mlabel{sec:prel}
We recall the definitions of algebras with various differential and integral operators and the constructions of the free objects in the corresponding categories.

\subsection{The definitions}
We recall the algebraic structures considered in this paper. We also introduce variations with bounded derivation order that will be needed later.
\begin{defn}
{\rm Let $\bfk$ be a unitary commutative ring. Let $\lambda\in \bfk$ be fixed.
\begin{enumerate}
\item A {\bf differential $\bfk$-algebra of weight $\lambda$} (also
    called a {\bf $\lambda$-differential $\bfk$-algebra}) is a unitary
    associative $\bfk$-algebra $R$ together with a linear operator
    $d \colon R\to R$ such that
\begin{equation}
d(1)=0,\ d(uv)=d(u)v+ud(v)+\lambda d(u)d(v) \text{ for all } u, v\in R.
\mlabel{eq:diffl}
\end{equation}
Such an algebra $(R,d)$ is said {\bf of order $n$}, where $n\geq 1$, if $d^n=0$.
\item A {\bf Rota-Baxter $\bfk$-algebra of weight $\lambda$} is an
    associative $\bfk$-algebra $R$ together with a linear operator
    $P\colon R\to R$ such that
\begin{equation}
P(u)P(v)=P(uP(v))+P(P(u)v)+\lambda P(uv) \text{ for all } u, v\in R.\mlabel{eq:rb}
\end{equation}

\item A {\bf differential Rota-Baxter k-algebra of weight $\lambda$} (also called a {\bf $\lambda$-differential
      Rota-Baxter $\bfk$-algebra}) is a differential $\bfk$-algebra
    $(R,d)$ of weight $\lambda$ and a Rota-Baxter operator $P$ of
    weight $\lambda$ such that
\begin{equation} d\circ P=\id.
\mlabel{eq:fft}
\end{equation}
\item An {\bf integro-differential $\bfk$-algebra of weight
      $\lambda$} (also called a {\bf $\lambda$-integro-differential
      $\bfk$-algebra}) is a differential $\bfk$-algebra $(R,d)$ of
    weight $\lambda$ with a linear operator $P\colon R \to R$ that satisfies Eq.~(\mref{eq:fft}) and such that
\begin{equation}
P(d(u)P(v))=uP(v)-P(uv) - \lambda P(d(u)v) \text{ for all } u, v\in R.
\mlabel{eq:ibpl}
\end{equation}
\end{enumerate}
}
\end{defn}

\subsection{Free differential Rota-Baxter algebras}

We first recall the construction of free commutative differential algebras and introduce their order $n$ variations.

\begin{theorem} Let $X$ be a set.
\begin{enumerate}
\item
Let $ \Delta X = \{ x^{(n)} \mid
x\in X, n\geq 0\}$ and let $\bfk\{X\}=\bfk[\Delta X]$ be the free commutative
algebra on the set $\Delta X$.  Define $d_X \colon
    \bfk[\Delta X]  \to \bfk[\Delta X]$ as follows. Let $w=u_1\cdots u_k,
    u_i\in \Delta X$, $1\leq i\leq k$, be a commutative word from the
    alphabet set $\Delta X $.  If $k=1$, so that $w=x^{(n)}\in
    \Delta X $, define $d_X(w)=x^{(n+1)}$. If $k>1$, recursively
    define
    \begin{equation}
      d_X(w)=d_X(u_1)u_2 \cdots u_k + u_1 d_X (u_2 \cdots u_k) + \lambda
      d_X(u_1)d_X(u_2 \cdots u_k).
      \mlabel{eq:prodind}
    \end{equation}
    Further define $d_X(1)=0$ and then extend $d_X$ to
    $\bfk[\Delta(X)]$ by linearity.  Then $(\bfk[\Delta X], d_X)$ is the
    free commutative differential algebra of weight $\lambda$ on the
    set $X$.  \mlabel{it:commfreediff}
\item
For a given $n\geq 1$, let $\Delta X^{(n+1)}:=\left\{x^{(k)}\,\big|\, x\in X, k\geq n+1\right\}$. Then $\bfk\{X\}\Delta X^{(n+1)}$ is the differential ideal $I_n$ of $\bfk\{X\}$ generated by the set $\{ x^{(n+1)}\,|\,x\in X\}$. The quotient $\bfk\{X\}/I_n$ has a canonical basis given by $\Delta_n X:=\{x^{(k)}\,|\, k\leq n\}$.
\mlabel{it:diffordn}
\end{enumerate}
\mlabel{thm:diff}
\end{theorem}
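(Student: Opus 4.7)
The plan is to handle the two parts of Theorem~\mref{thm:diff} separately. For part (\mref{it:commfreediff}), I would first verify that the recursion (\mref{eq:prodind}) defines a well-formed linear operator $d_X$ on $\bfk[\Delta X]$ satisfying the weight-$\lambda$ Leibniz rule, and then establish the universal property. The recursion distinguishes a first factor $u_1$, so the consistency issue is to show $d_X(u_1 \cdots u_k)$ is independent of the ordering of the $u_i \in \Delta X$. The base case $k=2$ is immediate from the manifest symmetry of the formula $d_X(u_1 u_2) = d_X(u_1) u_2 + u_1 d_X(u_2) + \lambda d_X(u_1) d_X(u_2)$. For $k \geq 3$ I would induct on $k$ and prove the stronger statement that for \emph{any} splitting $w = uv$ of a monomial into two subproducts,
$$d_X(uv) = d_X(u)\, v + u\, d_X(v) + \lambda\, d_X(u)\, d_X(v).$$
The symmetry of the right side in $u$ and $v$ then yields well-definedness, and linear extension together with $d_X(1)=0$ produces a weight-$\lambda$ differential algebra on $\bfk[\Delta X]$.

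For the universal property, given a commutative weight-$\lambda$ differential $\bfk$-algebra $(R,d)$ and a set map $f: X \to R$, define $\tilde f(x^{(n)}) := d^n(f(x))$ on generators. By the universal property of the commutative polynomial algebra on $\Delta X$, the map $\tilde f$ extends uniquely to a $\bfk$-algebra homomorphism $\bar f \colon \bfk[\Delta X] \to R$. Uniqueness of $\bar f$ as a differential homomorphism is forced by the requirements $\bar f|_X = f$ and $\bar f \circ d_X = d \circ \bar f$, since then $\bar f(x^{(n)}) = d^n(f(x))$. For existence I would verify $\bar f \circ d_X = d \circ \bar f$ on monomials by induction on length using (\mref{eq:prodind}) and the weight-$\lambda$ Leibniz rule satisfied by $d$.

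For part (\mref{it:diffordn}), write $I_n := \bfk\{X\} \Delta X^{(n+1)}$ for the ordinary ideal of $\bfk\{X\}$ generated by $\Delta X^{(n+1)}$, and let $J_n$ be the differential ideal generated by $\{x^{(n+1)} \mid x\in X\}$. To see $I_n$ is already a differential ideal, observe that for $v \in \Delta X^{(n+1)}$ and $a \in \bfk\{X\}$,
$$d_X(av) = d_X(a)\, v + a\, d_X(v) + \lambda\, d_X(a)\, d_X(v),$$
and both $v$ and $d_X(v)$ lie in $\Delta X^{(n+1)}$ since $d_X$ raises the derivation order. Thus $J_n \subseteq I_n$. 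The reverse inclusion follows from $x^{(k)} = d_X^{k-n-1}(x^{(n+1)}) \in J_n$ for every $k \geq n+1$, which forces $\Delta X^{(n+1)} \subseteq J_n$. Finally, the canonical algebra homomorphism $\bfk[\Delta X] \to \bfk[\Delta_n X]$ that kills $\Delta X^{(n+1)}$ has kernel exactly $I_n$, exhibiting $\bfk\{X\}/I_n$ as the polynomial algebra on $\Delta_n X$ with the indicated basis of commutative monomials.

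The main obstacle is the consistency check in part (\mref{it:commfreediff}): once $d_X$ is shown to satisfy the weight-$\lambda$ Leibniz rule for every splitting of a monomial (not only the distinguished one singled out by the recursion), all remaining steps -- the universal property in part (\mref{it:commfreediff}) and the order-$n$ quotient description in part (\mref{it:diffordn}) -- reduce to standard inductions on monomial length combined with the universal property of the commutative polynomial algebra $\bfk[\Delta X]$.
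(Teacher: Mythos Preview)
Your proposal is correct and provides a complete, self-contained argument. The paper, by contrast, does not prove this theorem at all: it simply attributes part~(\mref{it:commfreediff}) to~\mcite{GK3} and asserts that part~(\mref{it:diffordn}) is a direct consequence. Your write-up therefore supplies the details the paper omits; the consistency check for the recursion, the induction for the universal property, and the two-sided inclusion $I_n = J_n$ are exactly what one would expect such a proof to contain.
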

\begin{proof} Item~\mref{it:commfreediff} is from~\mcite{GK3} and Item~\mref{it:diffordn} is a direct consequence.
\end{proof}

For a set $Y$, let $C(Y)$ denote the free commutative monoid on $Y$. Thus elements in $\cm(Y)$ are commutative words, plus the identity $1$, from the alphabet set $Y$. Then $C(\Delta X)$ (resp. $C(\Delta_n X)$) is a linear basis of $\bfk[\Delta X]$ (resp. $\bfk[\Delta_n X]$).

We next recall the construction of free commutative Rota-Baxter
algebras in terms of mixable shuffles~\mcite{GK1,GK2}. The mixable shuffle product is shown to be the same as the quasi-shuffle product
of Hoffman~\mcite{EG,GZ,Ho}.  Let $A$ be a commutative
$\bfk$-algebra. Define

\begin{equation*}
  \sha (A)= \bigoplus_{k\geq 0} A^{\otimes (k+1)} = A\oplus A^{\otimes
    2}\oplus \cdots.
\end{equation*}
Let $\fraka =a_0\ot \cdots \ot a_m\in A^{\ot (m+1)}$ and
$\frakb=b_0\ot \cdots \ot b_n\in A^{\ot (n+1)}$. If $m=0$ or $n=0$,
define
\begin{equation}
  \fraka \shpr \frakb =\left \{\begin{array}{ll}
      (a_0b_0)\ot b_1\ot \cdots \ot b_n, & m=0, n>0,\\
      (a_0b_0)\ot a_1\ot \cdots \ot a_m, & m>0, n=0,\\
      a_0b_0, & m=n=0.
    \end{array} \right .
\end{equation}
If $m>0$ and $n>0$, inductively (on $m+n$) define
\begin{eqnarray}
  \fraka \shpr \frakb & = &
  (a_0b_0)\ot \Big(
  (a_1\ot a_2\ot \cdots \ot a_m) \shpr (1\ot b_1\ot \cdots \ot b_n) \notag \\
  &&
  \qquad \qquad +
  \; (1\ot a_1\ot \cdots \ot a_m) \shpr (b_1\ot \cdots \ot b_n) \mlabel{eq:shpr}\\
  && \qquad \qquad +
  \lambda\, (a_1\ot \cdots \ot a_m) \shpr (b_1\ot \cdots \ot b_n)\Big).
  \notag
\end{eqnarray}
Extending by additivity, we obtain a $\bfk$-bilinear map
\begin{equation*}
  \shpr: \sha (A) \times \sha (A) \rar \sha (A).
\end{equation*}
Alternatively,
\begin{equation*}
  \fraka\shpr \frakb=(a_0b_0)\otimes (\lbar{\fraka}
  \ssha_\lambda
  \lbar{\frakb}),
\end{equation*}
where~$\bar{\fraka} = a_1 \otimes \cdots \otimes a_m$, $\bar{\frakb}
= b_1 \otimes \cdots \otimes b_n$ and~$\ssha_\lambda$ is the mixable
shuffle (quasi-shuffle) product of weight
$\lambda$~\mcite{Gub,GK1,Ho}, which specializes to the shuffle product
${\tiny \ssha}$ when $\lambda=0$.

Define a $\bfk$-linear endomorphism $P_A$ on $\sha (A)$ by assigning
\[ P_A( x_0\otimes x_1\otimes \cdots \otimes x_n) =\bfone_A\otimes
x_0\otimes x_1\otimes \cdots\otimes x_n, \] for all $x_0\otimes
x_1\otimes \cdots\otimes x_n\in A^{\otimes (n+1)}$ and extending by
additivity.  Let $j_A\colon A\rar \sha (A)$ be the canonical
inclusion map.

\begin{theorem} $($\cite{GK1,GK2}$)$
\begin{enumerate}
\item
The pair $(\sha(A),P_A)$, together with the
  natural embedding $j_A\colon A\rightarrow \sha (A)$, is the free
  commutative Rota-Baxter $\bfk$-algebra on $A$ of weight $\lambda$.
  In other words, for any Rota-Baxter $\bfk$-algebra $(R,P)$ and any
  $\bfk$-algebra map $\varphi\colon A\rar R$, there exists a unique
  Rota-Baxter $\bfk$-algebra homomorphism $\tilde{\varphi}\colon (\sha
  (A),P_A)\rar (R,P)$ such that $\varphi = \tilde{\varphi} \circ
  j_A$ as $\bfk$-algebra homomorphisms.
\item Let $Y$ be a set and let $\bfk[Y]$ be the free
    commutative algebra on $Y$. The pair $(\sha(Y),P_Y):=(\sha (\bfk[Y]),
    P_{\bfk[Y]})$, together with the
  natural embedding $j_Y\colon Y\rightarrow \bfk[Y]\rightarrow \sha (\bfk[Y])$, is the free
  commutative Rota-Baxter $\bfk$-algebra of weight $\lambda$ on $Y$.
\end{enumerate}
  \mlabel{thm:shua}
\end{theorem}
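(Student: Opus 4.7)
The plan is to prove Part (a) in three substeps, then deduce Part (b) by a formal two-step universal-property argument. For Part (a), I would first verify that $(\sha(A),\shpr)$ is a unital commutative $\bfk$-algebra with unit $1_A\in A=A^{\otimes 1}$; second, that $P_A$ satisfies the Rota-Baxter identity of weight $\lambda$; and third, the universal property of $(\sha(A),P_A,j_A)$ among commutative Rota-Baxter $\bfk$-algebras receiving $A$.

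Commutativity of $\shpr$ is a direct induction on $m+n$: the base cases with $m=0$ or $n=0$ are manifestly symmetric, and the inductive step uses the symmetry of the three summands on the right of (eq:shpr) upon swapping $\fraka\leftrightarrow\frakb$. Associativity is the technical core of the whole theorem. I would prove $(\fraka\shpr\frakb)\shpr\frakc=\fraka\shpr(\frakb\shpr\frakc)$ by induction on $m+n+p$, expanding both sides via (eq:shpr) and matching the resulting $2^3=8$ terms; an equivalent route is to identify $\shpr$ with the mixable (quasi-)shuffle product on $\bar\fraka\ot\bar\frakb\ot\bar\frakc$ and invoke the standard associativity of that product.

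For the Rota-Baxter axiom, writing $P_A(\fraka)=1\ot\fraka$ and $P_A(\frakb)=1\ot\frakb$ and applying (eq:shpr), the leading factor is $1\cdot 1=1$ and the three summands produced are exactly $\fraka\shpr P_A(\frakb)$, $P_A(\fraka)\shpr\frakb$, and $\lambda(\fraka\shpr\frakb)$; prepending a leading $1\ot$ to each is precisely the application of $P_A$, so (eq:rb) drops out of the definition. For the universal property, uniqueness is forced by the identity
$$a_0\ot a_1\ot\cdots\ot a_n \;=\; a_0\shpr P_A\!\left(a_1\shpr P_A\!\left(a_2\shpr\cdots\shpr P_A(a_n)\cdots\right)\right),$$
which one verifies by induction on $n$ using the $m=0$ case of (eq:shpr). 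Any Rota-Baxter extension $\tilde\varphi$ of $\varphi\colon A\to R$ is therefore obliged to send the above tensor to $\varphi(a_0)\,P\bigl(\varphi(a_1)\,P(\cdots P(\varphi(a_n))\cdots)\bigr)$. For existence, I would define $\tilde\varphi$ by this formula and check multiplicativity with respect to $\shpr$ by induction on the total tensor degree, using (eq:shpr) on the $\sha(A)$-side and the Rota-Baxter axiom (eq:rb) in $R$ on the target side; the intertwining $\tilde\varphi\circ P_A=P\circ\tilde\varphi$ is immediate from the formula.

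Finally, Part (b) follows by composing universal properties: given a commutative Rota-Baxter $\bfk$-algebra $(R,P)$ and a set map $f\colon Y\to R$, freeness of $\bfk[Y]$ on $Y$ yields a unique $\bfk$-algebra homomorphism $\bfk[Y]\to R$ extending $f$, and Part (a) then yields a unique Rota-Baxter homomorphism $\sha(\bfk[Y])\to(R,P)$ extending this. The principal obstacle throughout is the associativity of $\shpr$; the commutativity, the Rota-Baxter identity, and the universal property all reduce to routine inductions directly driven by the recursion (eq:shpr) once associativity is in hand.
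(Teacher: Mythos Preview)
The paper does not prove this theorem at all; it is stated with attribution to \cite{GK1,GK2} and used as background. Your outline is correct and is essentially the argument carried out in those references: associativity and commutativity of $\shpr$ by induction on total tensor length, the Rota-Baxter identity for $P_A$ read off directly from the recursion (\ref{eq:shpr}) applied to $(1\ot\fraka)\shpr(1\ot\frakb)$, uniqueness of $\tilde\varphi$ forced by the nested-$P$ presentation of pure tensors, and existence checked by induction using (\ref{eq:rb}) in the target. Part~(b) is indeed just the composition of the universal properties of $\bfk[Y]$ and of $\sha(A)$.
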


Since $\shpr$ is compatible with the multiplication in $A$, we will often suppress the symbol $\shpr$ and simply denote $x y$ for $x\shpr y$ in $\sha (A)$, unless there is a danger of confusion.

A linear basis of $\sha(\bfk[Y])$ is given by
\begin{equation}
\frakB(Y):= \left\{ x_0\ot \cdots \ot x_k\,\big|\, x_i\in C(Y), 1\leq i\leq k, k\geq 0\right\},
\mlabel{eq:rbm}
\end{equation}
called the set of {\bf Rota-Baxter monomials} in $Y$.
The integer $\dep(x_0\ot \cdots \ot x_k):=k+1$ is called the {\bf depth} of $x_0\ot \cdots \ot x_k$.
To simplify notations, we also let $P$ denote $P_{\bfk[Y]}$. Then $1\ot u$ and $P(u)$ stand for the same element and will be be used as convenience in this paper.

We now put the differential and Rota-Baxter algebra structures together.
Let $(A, d_0)$ be a commutative differential $\bfk$-algebra of weight $\lambda$. Extend $d_0$ to $\sha(A)$ by
\begin{eqnarray*}
  \lefteqn{ d_A(x_0\otimes x_1\otimes\ldots\otimes x_k)}\\
  &=&
  d_0(x_0)\otimes x_1\otimes \ldots \otimes x_k + x_0x_1\otimes
  x_2 \otimes \ldots \otimes x_k +\lambda d_0(x_0) x_1\otimes x_2\otimes
  \ldots \otimes x_k, \quad k\geq 0.
\end{eqnarray*}

\begin{theorem}$($\cite{GK3}$)$
Let $X$ be a set and let $\bfk[\Delta X]$ be the free commutative differential algebra of weight $\lambda$ on $X$ in Theorem~\mref{thm:diff}.\mref{it:commfreediff}. The triple $(\sha (\bfk[\Delta X]), d_{\bfk[\Delta X]},
    P_{\bfk[\Delta X]})$, together with $j_X:X\to \Delta X\to \sha(\bfk[\Delta X])$, is the free commutative differential Rota-Baxter
    $\bfk$-algebra of weight $\lambda$ on $X$.
\delete{, as described by the
    following universal property: For any commutative differential
    Rota-Baxter $\bfk$-algebra $(R, d, P)$ of weight $\lambda$ and
    any set map $\varphi\colon X \to R$, there exists a unique
    $\lambda$-differential Rota-Baxter $\bfk$-algebra homomorphism
    $\tilde{\varphi}\colon (\sha (\bfk[\Delta X]), d_{\bfk[\Delta X]},
    P_{\bfk[\Delta X]})\rar (R, d, P)$ such that
    $\tilde{\varphi}\circ j_X = \varphi$.
    }
\mlabel{thm:freediffrb}
\end{theorem}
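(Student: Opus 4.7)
The plan is to verify two things: first, that the triple $(\sha(\bfk[\Delta X]), d_{\bfk[\Delta X]}, P_{\bfk[\Delta X]})$ is a commutative differential Rota-Baxter algebra of weight $\lambda$; and second, that it enjoys the appropriate universal property. The strategy for the universal property is to bootstrap the two universal properties already in hand, Theorem~\ref{thm:diff}.\ref{it:commfreediff} (free commutative differential algebras) and Theorem~\ref{thm:shua} (free commutative Rota-Baxter algebras).

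For the first step, write $A = \bfk[\Delta X]$. The operator $P_A$ is a Rota-Baxter operator of weight $\lambda$ by Theorem~\ref{thm:shua}. The relation $d_A \circ P_A = \id$ is immediate from the defining formulas: for $w = x_0 \otimes \cdots \otimes x_k$, one has $P_A(w) = 1 \otimes w$, and the three-term formula for $d_A$ collapses to $w$ because $d_A(1) = 0$. The substantive calculation is the Leibniz identity $d_A(\fraka \shpr \frakb) = d_A(\fraka)\shpr \frakb + \fraka \shpr d_A(\frakb) + \lambda\, d_A(\fraka)\shpr d_A(\frakb)$, which I would prove by induction on $m+n$, where $\fraka \in A^{\otimes(m+1)}$ and $\frakb \in A^{\otimes(n+1)}$. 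The base case $m=n=0$ is the Leibniz rule of weight $\lambda$ already holding on $A$ itself. The inductive step uses the recursive formula~(\ref{eq:shpr}) to split $\fraka\shpr\frakb$ into three pieces, applies $d_A$ term-by-term, and then reassembles the result using the induction hypothesis and the $\bfk$-algebra structure of $A$.

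For the second step, let $(R,d,P)$ be any commutative differential Rota-Baxter algebra of weight $\lambda$ and $\varphi: X \to R$ a set map. Theorem~\ref{thm:diff}.\ref{it:commfreediff} produces a unique differential algebra homomorphism $\bar\varphi\colon (\bfk[\Delta X], d_X) \to (R,d)$ extending $\varphi$, and Theorem~\ref{thm:shua} produces a unique Rota-Baxter algebra homomorphism $\tilde\varphi\colon (\sha(\bfk[\Delta X]), P_A) \to (R,P)$ extending $\bar\varphi$. The only nontrivial point is that $\tilde\varphi$ automatically intertwines the two differentials, that is, $\tilde\varphi \circ d_A = d \circ \tilde\varphi$. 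Here I would induct on the depth of a Rota-Baxter monomial. Any basis element factors as $x_0 \otimes w = x_0 \shpr P_A(w)$ with $w$ of strictly smaller depth; by the Leibniz rule from Step 1 and by $d_A\circ P_A=\id$, $d_A(x_0\otimes w) = d_X(x_0)\shpr P_A(w) + x_0 \shpr w + \lambda\, d_X(x_0)\shpr w$. Applying $\tilde\varphi$ and using that $\bar\varphi$ already commutes with differentials yields exactly the same expression as expanding $d(\tilde\varphi(x_0)\,P(\tilde\varphi(w)))$ in $R$ via the Leibniz rule of weight $\lambda$ and $d\circ P = \id$. Uniqueness of $\tilde\varphi$ is inherited from uniqueness at each of the two bootstrapping stages.

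The main obstacle is the Leibniz identity on $\sha(A)$ in Step~1: the induction is straightforward in structure but involves bookkeeping across the three summands of both~(\ref{eq:shpr}) and of the formula defining $d_A$, and one must organize the recursion so that no term is double-counted. The verification in Step~2 is the conceptual heart: it is precisely the interplay of the weighted Leibniz rule, the Rota-Baxter axiom, and the section identity $d\circ P = \id$ that guarantees compatibility with the inductively constructed Rota-Baxter extension, and this is where all three axioms defining a differential Rota-Baxter algebra are genuinely used together.
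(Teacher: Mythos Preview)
The paper does not supply its own proof of this theorem; it is quoted from~\cite{GK3} and stated without argument. Your proposal is correct and is precisely the strategy used in~\cite{GK3}: verify that $d_A$ satisfies the weighted Leibniz rule on $\sha(A)$ by induction on total tensor degree, check $d_A\circ P_A=\id$ directly, and then compose the universal properties of Theorems~\ref{thm:diff} and~\ref{thm:shua} to produce $\tilde\varphi$, afterwards confirming that $\tilde\varphi$ respects the differential via the factorization $x_0\otimes w = x_0\shpr P_A(w)$ together with the Leibniz rule and $d\circ P=\id$ on the target.

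One small remark: in your Step~2 the induction on depth is not strictly needed for the differential compatibility of $\tilde\varphi$, since the calculation you sketch for $x_0\otimes w$ already closes using only that $\bar\varphi$ commutes with differentials on $A$, that $\tilde\varphi$ is a Rota-Baxter algebra map, and that $d\circ P=\id$ holds in $R$; no appeal to $\tilde\varphi\circ d_A = d\circ\tilde\varphi$ on the shorter tensor $w$ is required. But this is a matter of presentation, not correctness.
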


Apply the notations in Eq.~(\mref{eq:rbm}) to $Y:=\Delta X$. The set
\begin{equation}
\frakB(\Delta X):=\left\{ u_0\ot \cdots \ot u_k\,\big|\, u_i\in \cm(\Delta X), 0\leq i\leq k, k\geq 0\right\}
\mlabel{eq:drbm}
\end{equation}
is a $\bfk$-basis of the free commutative differential Rota-Baxter algebra $\sha(\Delta X)$, called the set of {\bf differential Rota-Baxter (DRB) monomials} on $X$.

Similarly  with $Y:=\Delta_n X, n\geq 1,$ $\calb(\Delta_n X)$ is a basis of $\sha(\Delta_n X)$ and is called the set of {\bf DRB monomials of order $n$} on $X$.
We note that in $\sha(\bfk[\Delta_n X])$, the property $d^{n+1}(u)=0$ only applies to $u\in X$, but not to tensors of length greater than two. For example, taking $n=1$, then
$d^2(x)=0$, but $d(1\ot x)= x$ and hence $d^2(1\ot x)=d(x)=x^{(1)}\neq 0$.

\subsection{Free commutative operated algebras}
We now construct the free commutative operated algebra on a set $X$ that has the free commutative (differential) Rota-Baxter algebra as a quotient. At the same time, the explicit construction $\sha(X)$ of free commutative Rota-Baxter algebra in Theorem~\mref{thm:shua} can be realized on a submodule of the free commutative operated algebra spanned by reduced words under a rewriting rule defined by the Rota-Baxter axiom.

This construction is parallel to that of the free (noncommutative) operated algebra on a set in~\mcite{BCQ,Gop,Gub,GSZ}. See~\mcite{Qiu} for the non-unitary case.

\begin{defn}
{\rm A {\bf commutative operated monoid with operator set $\Omega$} is a commutative monoid $G$ together with maps $\alpha_\omega:G\to G,\omega\in \Omega$. A homomorphism between commutative operated monoids $(G,\{\alpha_\omega\}_\omega)$ and $(H,\{\beta_\omega\}_\omega)$ is a monoid homomorphism $f:G\to H$ such that $f\circ \alpha_\omega=\beta_\omega\circ f$ for $\omega\in \Omega$.
}
\end{defn}
We next construct the free objects in the category of commutative operated monoids.

Fix a set $Y$. We define monoids $\frakC_n:=\frakC_n(Y)$ for $n\geq 0$ by a recursion.
First denote $\frakC_0:=\cm(Y)$.
Let $\lc \cm(Y)\rc_\omega:=\{\lc u\rc_\omega\,|\, u\in \cm(Y)\}, \omega\in \Omega,$ be disjoint sets in bijection with and disjoint from $\cm(Y)$.
Then define
$$\frakC_1:= \cm(Y\sqcup (\sqcup_{\omega\in \Omega} \lc \cm(Y)\rc_\omega)).$$
Note that elements in $\lc \cm(Y)\rc_\omega$ are only symbols indexed by elements in $\cm(Y)$. For example,
$\lc 1\rc_\omega$ is not the identity, but a new symbol.
The inclusion $Y\hookrightarrow Y\sqcup (\sqcup_{\omega\in \Omega}\lc\frakC_0\rc_\omega)$ induces a monomorphism
$i_{0,1}: \frakC_0=\cm(Y)\hookrightarrow \frakC_1=\cm(Y\sqcup (\sqcup_\omega\, \lc\frakC_0\rc_\omega))$ of free commutative monoids through which we identify $\frakC_0$ with its image in $\frakC_1$.
Inductively assume that $\frakC_{n-1}$ have been defined for $n\geq 2$ and that the embedding
$$i_{n-2,n-1}: \frakC_{n-2} \to \frakC_{n-1}$$
has been obtained. We then define
\begin{equation}
 \frakC_n:=\cm(Y\sqcup (\sqcup_\omega \lc\frakC_{n-1}\rc_\omega) ).
 \mlabel{eq:frakm}
 \end{equation}
We also have the injection
$$  \lc\frakC_{n-2}\rc_\omega \hookrightarrow
    \lc \frakC_{n-1} \rc_\omega, \ \omega\in \Omega.$$
Thus by the freeness of
$\frakC_{n-1}=\cm(Y\sqcup (\sqcup_\omega \lc\frakC_{n-2}\rc_\omega))$ as a free commutative monoid, we have
\begin{eqnarray*}
\frakC_{n-1} &=& \cm(Y\sqcup (\sqcup_\omega \lc\frakC_{n-2}\rc_\omega))\hookrightarrow
    \cm(Y\sqcup (\sqcup_\omega \lc \frakC_{n-1}\rc_\omega)) =\frakC_{n}.
\end{eqnarray*}
We finally define the commutative monoid
$$ \frakC(Y):=\bigcup_{n\geq 0}\frakC_n=\dirlim \frakC_n.$$
Elements in $\cmapm{Y}$ are called {\bf bracketed monomials} in $Y$. Defining
\begin{equation}
\lc\ \rc_\omega: \cmapm{Y}\to \cmapm{Y}, u\mapsto \lc u\rc_\omega, \ \omega\in \Omega,
\mlabel{eq:mapp}
\end{equation}
$(\cmapm{Y}, \{\lc\ \rc_\omega\}_\omega)$ is a commutative operated monoid and its linear span $(\bfk\cmapm{Y}, \lc\ \rc_\omega)$ is a commutative (unitary) operated $\bfk$-algebra.
\begin{prop}
Let $j_Y:Y \to \cmapm{Y}$ be the natural embedding.
\begin{enumerate}
\item
The triple $(\cmapm{Y},\{\lc\ \rc_\omega\}_\omega, j_Y)$ is the free commutative operated monoid on $Y$. More precisely, for any commutative operated monoid $G$ and set map $f:Y\to G$, there is a unique extension of $f$ to a homomorphism $\free{f}:\cmapm{Y}\to G$ of operated monoids.
\item
The triple $(\bfk\cmapm{Y},\{\lc\ \rc_\omega\}_\omega, j_Y)$ is the free commutative operated unitary $\bfk$-algebra on $Y$. More precisely, for any commutative $\bfk$-algebra $R$ and set map $f:Y\to R$, there is a unique extension of $f$ to a homomorphism $\free{f}:\bfk\cmapm{Y}\to R$ of operated $\bfk$-algebras.
\end{enumerate}
\mlabel{pp:freetm}
\end{prop}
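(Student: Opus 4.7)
The plan is to prove part (1) by induction on the level $n$ of the recursive construction $\frakC_n(Y)$, using the universal property of the ordinary free commutative monoid at each stage, and then obtain part (2) by linearization.

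For part (1), given a commutative operated monoid $(G,\{\alpha_\omega\}_\omega)$ and a set map $f\colon Y\to G$, I will construct a compatible family of monoid homomorphisms $\free{f}_n\colon \frakC_n\to G$ and let $\free{f}:=\dirlim \free{f}_n$. At level $n=0$, the set $\frakC_0=\cm(Y)$ is the free commutative monoid on $Y$, so $f$ extends uniquely to a monoid homomorphism $\free{f}_0\colon \cm(Y)\to G$. Assuming $\free{f}_{n-1}$ has been constructed, define a set map
\[
g_n\colon Y\sqcup \Big(\bigsqcup_{\omega\in\Omega}\lc \frakC_{n-1}\rc_\omega\Big)\to G
\]
by $g_n(y)=f(y)$ for $y\in Y$ and $g_n(\lc u\rc_\omega)=\alpha_\omega(\free{f}_{n-1}(u))$ for $u\in \frakC_{n-1}$. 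Since $\frakC_n=\cm(Y\sqcup(\sqcup_\omega \lc \frakC_{n-1}\rc_\omega))$ is a free commutative monoid, $g_n$ extends uniquely to a monoid homomorphism $\free{f}_n\colon \frakC_n\to G$.

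Next I verify the inductive compatibility $\free{f}_n\circ i_{n-1,n}=\free{f}_{n-1}$. Both sides are monoid homomorphisms on $\frakC_{n-1}=\cm(Y\sqcup(\sqcup_\omega \lc \frakC_{n-2}\rc_\omega))$, so it suffices to check agreement on generators: on $y\in Y$ both give $f(y)$, and on $\lc u\rc_\omega$ with $u\in \frakC_{n-2}$ both give $\alpha_\omega(\free{f}_{n-2}(u))$ by the inductive hypothesis. Passing to the direct limit yields a monoid homomorphism $\free{f}\colon \cmapm{Y}\to G$. To see that $\free{f}$ is a homomorphism of operated monoids, take any $u\in \cmapm{Y}$; then $u\in \frakC_{n-1}$ for some $n$, and by construction $\free{f}(\lc u\rc_\omega)=\free{f}_n(\lc u\rc_\omega)=\alpha_\omega(\free{f}_{n-1}(u))=\alpha_\omega(\free{f}(u))$. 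Uniqueness is immediate: any operated monoid homomorphism extending $f$ must agree with $\free{f}$ on $Y$ and, by the identity $\free{f}\circ\lc\ \rc_\omega=\alpha_\omega\circ\free{f}$, must also agree on every $\lc u\rc_\omega$, so by multiplicativity it agrees on the generating set of each $\frakC_n$, hence everywhere.

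For part (2), I would observe that $\bfk\cmapm{Y}$ is by construction the monoid algebra of $\cmapm{Y}$, with the operators $\lc\ \rc_\omega$ extended $\bfk$-linearly. Given a commutative operated $\bfk$-algebra $(R,\{\beta_\omega\}_\omega)$ and a set map $f\colon Y\to R$, apply part (1) to the underlying commutative operated monoid of $R$ (viewing $R$ multiplicatively with operators $\beta_\omega$) to get $\free{f}\colon \cmapm{Y}\to R$, and then extend $\bfk$-linearly to $\bfk\cmapm{Y}\to R$. Multiplicativity and $\bfk$-linearity of the extension, together with $\bfk$-linearity of each $\beta_\omega$, show that the extension is a homomorphism of operated $\bfk$-algebras, and uniqueness follows from uniqueness in part (1) together with the fact that $\cmapm{Y}$ spans $\bfk\cmapm{Y}$.

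The only mildly delicate point is the bookkeeping in the direct limit, namely confirming that the inclusions $i_{n-1,n}$ really do identify $\frakC_{n-1}$ with a submonoid of $\frakC_n$ and that the operator $\lc\ \rc_\omega$ of $\cmapm{Y}$ agrees with the generator-level bracket used in the recursion; both follow from the freeness of $\frakC_n$ as a commutative monoid on its displayed generating set, so I expect no substantive obstacle.
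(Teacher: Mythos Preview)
Your proposal is correct and follows essentially the same approach as the paper's proof: an inductive construction of $\free{f}_n\colon \frakC_n\to G$ using the universal property of the free commutative monoid at each stage, followed by passage to the direct limit, with part (2) obtained by linearization. The paper gives only a sketch (referring to the noncommutative case for details), whereas you have spelled out the compatibility check, the operated-monoid condition, and the uniqueness argument more fully.
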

\begin{proof}
We only need to show that $\cmapm{Y}$ is a free commutative operated monoid. The proof is similar to the noncommutative case~\mcite{Gop,Gub}, so we just give a sketch.

Let a commutative operated monoid $(G,\{\alpha_\omega\}_\omega)$ and a map $f:Y\to G$ be given. Then by the universal property of $\frakC_0:=C(Y)$, there is a unique monoid homomorphism $f_0:\frakC_0\to G$ extending $f$. Then $f_0$ extends uniquely to
$$f_1: \lc \frakC_0\rc_\omega \to G, \quad \lc u\rc_\omega \mapsto \alpha_\omega(f_0(u)), u\in \frakC_0,$$
such that $(f_1\circ \lc\ \rc_\omega)(u)= (\alpha_\omega\circ f_1)(u), \omega\in \Omega,$ when defined. We then further get a monoid homomorphism
$$ f_1: \frakC_1:=\cm (Y\sqcup (\sqcup_\omega \lc\frakC_0\rc_\omega)) \to G.$$
By induction on $n\geq 0$ we obtain a unique
$f_n: \frakC_n\to G, n\geq 0,$ compatible with the direct system, yielding the unique homomorphism
$\free{f}: \cmapm{Y}\to G$ of operated monoids.
\end{proof}

By the universal property of $\bfk\cmapm{Y}$, we obtain the following conclusion from general principles of universal algebra~\mcite{BN,Co}.
\begin{prop}
Let $\Omega=\{d, P\}$ and denote $d(u):=\lc u\rc_d, P(u):=\lc u\rc_P$\,.
Let $I_{DRB}$ be the operated ideal of $\bfk\cmapm{Y}$ generated by the set
$$\left\{\left . \begin{array}{l}
d(uv)-d(u)v-ud(v)-\lambda d(u)d(v),\\
 P(u)P(v)-P(uP(v)) -P(P(u) v) -\lambda P(uv),\\
 (d\circ P)(u)=u \end{array} \,\right|\, u, v\in \frak\cm(Y)\right\}.$$
Then the quotient operated algebra
$\bfk\cmapm{Y}/I_{DRB}$, with the quotient of the operator $d$ and $P$, is the free commutative differential Rota-Baxter algebra.
\mlabel{pp:freerb}
\end{prop}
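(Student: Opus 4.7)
The plan is to invoke the standard universal-algebra principle that a variety defined by equational axioms has its free objects presented as quotients of the free object in a larger category by the operated ideal generated by those axioms. The ambient object here is $\bfk\cmapm{Y}$ with its universal property from Proposition~\mref{pp:freetm}, and the axioms are precisely Eqs.~(\mref{eq:diffl}), (\mref{eq:rb}) and (\mref{eq:fft}), which by design coincide with the three chosen generators of $I_{DRB}$.

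First I would fix an arbitrary commutative differential Rota-Baxter $\bfk$-algebra $(R, d_R, P_R)$ of weight $\lambda$ and a set map $f \colon Y \to R$. Viewing $R$ as a commutative operated $\bfk$-algebra with operator set $\Omega = \{d,P\}$ acting by $d_R$ and $P_R$, Proposition~\mref{pp:freetm} supplies a unique homomorphism $\bar f \colon \bfk\cmapm{Y} \to R$ of commutative operated $\bfk$-algebras extending $f$.

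Next I would check that $I_{DRB} \subseteq \ker \bar f$. Applied to each of the three generator types listed in the statement, $\bar f$ produces the corresponding expressions in $R$, all of which vanish because $(R, d_R, P_R)$ is assumed to be a differential Rota-Baxter algebra. Since $\bar f$ is an operated algebra homomorphism and $I_{DRB}$ is the operated ideal generated by those elements, $\bar f$ descends uniquely to a homomorphism $\tilde f \colon \bfk\cmapm{Y}/I_{DRB} \to R$. The quotient inherits the operators $d$ and $P$ from $\bfk\cmapm{Y}$, and by the very definition of $I_{DRB}$ the three defining relations of a commutative differential Rota-Baxter algebra are now satisfied on the nose; thus $\tilde f$ is a morphism in the intended category.

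For uniqueness, I would observe that by the recursive construction in Eq.~(\mref{eq:frakm}) the set $Y$ generates $\cmapm{Y}$ as a commutative operated monoid, hence its image generates $\bfk\cmapm{Y}/I_{DRB}$ as a commutative differential Rota-Baxter algebra; any two extensions of $f$ must therefore agree. There is no genuine obstacle here, as the entire argument is Birkhoff-style bookkeeping, with all substantive content already packaged in Proposition~\mref{pp:freetm} and the construction of $\cmapm{Y}$; this is exactly the allusion to~\mcite{BN,Co} in the surrounding text.
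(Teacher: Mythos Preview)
Your proposal is correct and follows exactly the approach the paper indicates: the paper does not give a detailed proof but simply states that the result follows from the universal property of $\bfk\cmapm{Y}$ in Proposition~\mref{pp:freetm} together with general principles of universal algebra~\mcite{BN,Co}. You have spelled out precisely this Birkhoff-style argument.
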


Combining Proposition~\mref{pp:freerb} with Theorem~\mref{thm:shua}, we have

\begin{prop}\mlabel{pp:comp}
The natural embedding
$$ \sha(\bfk[\Delta X])\to \bfk\,\cmapm{\Delta X}, \quad
x_0\ot x_1\ot \cdots \ot x_k \mapsto x_0P(x_1P( \cdots P(x_k) \cdots ))$$
composed with the quotient map
$\rho: \bfk\,\frakC(\Delta X) \to \bfk\,\cmapm{\Delta X}/I_{DRB}$ gives a linear bijection (in fact, an isomorphism of differential Rota-Baxter algebras)
$$ \theta: \sha(\bfk[\Delta X])\to \bfk\,\cmapm{\Delta X}/I_{DRB}.$$
\end{prop}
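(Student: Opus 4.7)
The plan is to exhibit a two-sided inverse $\bar\varphi$ for $\theta$ using the universal property of $\bfk\cmapm{\Delta X}$, and then check that both compositions are the identity. First I would construct $\bar\varphi$. By Proposition~\mref{pp:freetm}, $\bfk\cmapm{\Delta X}$ is the free commutative operated $\bfk$-algebra on $\Delta X$; applying its universal property to the natural inclusion $\Delta X \hookrightarrow \bfk[\Delta X] \hookrightarrow \sha(\bfk[\Delta X])$ produces a unique homomorphism $\bar\varphi \colon \bfk\cmapm{\Delta X} \to \sha(\bfk[\Delta X])$ of commutative operated algebras (with operators $d$ and $P$). Since $\sha(\bfk[\Delta X])$ is a differential Rota-Baxter algebra by Theorem~\mref{thm:freediffrb}, the three types of generators of $I_{DRB}$ are all sent to zero under $\bar\varphi$. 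Consequently $\bar\varphi$ descends to a well-defined differential Rota-Baxter algebra homomorphism $\bar\varphi \colon \bfk\cmapm{\Delta X}/I_{DRB} \to \sha(\bfk[\Delta X])$.

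Next I would verify that $\bar\varphi$ and $\theta$ are mutual inverses. The identity $\bar\varphi \circ \theta = \id$ can be checked directly on the basis $\frakB(\Delta X)$ of $\sha(\bfk[\Delta X])$: for a pure tensor $x_0 \otimes x_1 \otimes \cdots \otimes x_k$, we have $\theta(x_0 \otimes \cdots \otimes x_k) = x_0 P(x_1 P(\cdots P(x_k)\cdots))$ in $\bfk\cmapm{\Delta X}/I_{DRB}$, and $\bar\varphi$ sends this nested expression back to $x_0 \otimes \cdots \otimes x_k$ via the defining formula for $P_{\bfk[\Delta X]}$. For the reverse composition $\theta \circ \bar\varphi = \id$, I would observe that both $\theta \circ \bar\varphi$ and the identity are endomorphisms of $\bfk\cmapm{\Delta X}/I_{DRB}$ as a commutative differential Rota-Baxter algebra whose restrictions to the generating set $\Delta X$ agree; by the universal property in Proposition~\mref{pp:freerb}, they must coincide.

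The main obstacle I anticipate is the verification that $\theta$ itself is a homomorphism of commutative differential Rota-Baxter algebras, since this is exactly what enables the universal-property argument for $\theta \circ \bar\varphi = \id$. Compatibility with $P$ is built into the definition of $\theta$: because $P_{\bfk[\Delta X]}(x_0 \otimes \cdots \otimes x_k) = 1 \otimes x_0 \otimes \cdots \otimes x_k$, one has $\theta(P_{\bfk[\Delta X]}(x_0 \otimes \cdots \otimes x_k)) = P(x_0 P(\cdots P(x_k)\cdots)) = P(\theta(x_0 \otimes \cdots \otimes x_k))$. Multiplicativity reduces to checking that the mixable shuffle recursion~(\mref{eq:shpr}) on $\sha(\bfk[\Delta X])$ matches, after applying $\theta$, the expansion produced by repeatedly invoking the Rota-Baxter identity $P(u)P(v) = P(uP(v)) + P(P(u)v) + \lambda P(uv)$ on products of nested $P$-expressions in $\bfk\cmapm{\Delta X}/I_{DRB}$; this follows by a straightforward induction on the sum of the tensor depths of the two factors. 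Compatibility of $\theta$ with $d$ follows from the extension formula defining $d_{\bfk[\Delta X]}$ on $\sha(\bfk[\Delta X])$ combined with the Leibniz rule and the identity $d\circ P = \id$, both of which hold modulo $I_{DRB}$.
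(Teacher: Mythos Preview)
Your strategy---construct an inverse via the universal property of the free operated algebra and then check both compositions---is exactly how one unpacks the paper's one-line justification (``Combining Proposition~\mref{pp:freerb} with Theorem~\mref{thm:shua}'').

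There is, however, a real gap in your verification that $\theta$ commutes with $d$. On a generator $x\in X\subseteq\Delta X$ one has $d_{\bfk[\Delta X]}(x)=x^{(1)}$ in $\sha(\bfk[\Delta X])$, so $\theta(d_{\bfk[\Delta X]}(x))$ is the \emph{generator} $x^{(1)}\in\Delta X\subseteq\cmapm{\Delta X}$, whereas $d(\theta(x))=\lc x\rc_d$. The ideal $I_{DRB}$ of Proposition~\mref{pp:freerb} contains only the Leibniz, Rota--Baxter, and section relations; it does \emph{not} identify the free generator $x^{(1)}$ with the bracketed word $\lc x\rc_d$. Hence these two elements are distinct in $\bfk\cmapm{\Delta X}/I_{DRB}$, the map $\theta$ fails to intertwine the derivations, and your universal-property argument for $\theta\circ\bar\varphi=\id$ collapses. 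In fact $\theta$ cannot be surjective as literally stated: by Proposition~\mref{pp:freerb} the target is the free commutative differential Rota--Baxter algebra on the set $\Delta X$, which by Theorem~\mref{thm:freediffrb} is $\sha(\bfk[\Delta(\Delta X)])$, strictly larger than $\sha(\bfk[\Delta X])$.

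This reflects an imprecision in the paper's statement rather than a flaw in your plan. The intended target is evidently $\bfk\cmapm{X}/I_{DRB}$ (compare the formulation in Corollary~\mref{co:comp}), with the symbol $x^{(k)}$ read as $d^k(x)$ under the embedding. With that reading both the paper's citation of the two freeness results and your explicit inverse construction go through, once you replace $\Delta X$ by $X$ as the generating set throughout.
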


Through $\theta$, we can identify the basis $\frakB(\Delta X)$ of $\sha(\bfk\Delta X)$ with its image in $\bfk\frakC(\Delta X)$:
\begin{equation}
u_0\ot u_1 \ot \cdots \ot u_k \leftrightarrow u_0\lc u_1\lc \cdots \lc u_k\rc\cdots \rc \rc \leftrightarrow u_0P (u_1 P(\cdots P(u_k)\cdots )).
\mlabel{eq:rbid}
\end{equation}
Thus we also use $P$ for $P_{\Delta X}$ on $\sha(\bfk[\Delta X])$ and $d^{\ell}(x)=x^{(\ell)}$ for $x\in X$ and $\ell\geq 0$.

As a consequence of Proposition~\mref{pp:comp}, we have
\begin{coro}\mlabel{co:comp}
Let $n\geq 1$. Let $I_{DRB,n}$ be the operated ideal of $\frakC(X)$ generated by $I_{DRB}$ together with the set $\{x^{(n+1)}=d^{n+1}(x)\, |\, x\in X\}$. The natural embedding
$$ \sha(\bfk[\Delta_n X])\to \bfk\cmapm{X}, \quad
x_0\ot x_1\ot \cdots \ot x_k \mapsto x_0P(x_1P( \cdots P(x_k) \cdots ))$$
composed with the quotient map
$\rho: \bfk\,\frakC(Y) \to \bfk\cmapm{Y}/I_{DRB,n}$ gives a linear bijection
$$ \theta_n: \sha(\bfk[\Delta_n X])\to \bfk\cmapm{X}/I_{DRB,n}.$$
\end{coro}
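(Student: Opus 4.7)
The plan is to derive Corollary~\mref{co:comp} from Proposition~\mref{pp:comp} by descending the isomorphism $\theta$ to the corresponding order-$n$ quotients on each side. The additional relations $d^{n+1}(x)=0$, $x\in X$, cut out matching operated ideals under $\theta$, so quotienting both sides by these ideals will yield the claimed bijection $\theta_n$.

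First, by the very definition of $I_{DRB,n}$ as the operated ideal generated by $I_{DRB}$ together with $\{d^{n+1}(x)\mid x\in X\}$, we have
$$ \bfk\cmapm{X}/I_{DRB,n}\;\cong\;(\bfk\cmapm{X}/I_{DRB})\big/K_n, $$
where $K_n$ is the differential Rota--Baxter ideal of $\bfk\cmapm{X}/I_{DRB}$ generated by the classes of $d^{n+1}(x)$. On the other side, let $J_n\subseteq\sha(\bfk[\Delta X])$ be the differential Rota--Baxter ideal generated by $\{x^{(n+1)}\mid x\in X\}$, and I will show $\sha(\bfk[\Delta X])/J_n\cong\sha(\bfk[\Delta_n X])$ as differential Rota--Baxter algebras. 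The most robust route is through universal properties: equipping $\sha(\bfk[\Delta_n X])$ with the differential obtained by extending the canonical one on $\bfk[\Delta_n X]$ (Theorem~\mref{thm:diff}.\mref{it:diffordn}) to tensors via the formula of Theorem~\mref{thm:freediffrb}, and with Rota--Baxter operator $P_{\bfk[\Delta_n X]}$ from Theorem~\mref{thm:shua}, one verifies that it satisfies the universal property of the free commutative differential Rota--Baxter algebra of weight $\lambda$ and order $n$ on $X$. This is a direct adaptation of the proof of Theorem~\mref{thm:freediffrb}. The quotient $\sha(\bfk[\Delta X])/J_n$ satisfies the same universal property, since any differential Rota--Baxter homomorphism $\tilde f$ from the free commutative differential Rota--Baxter algebra $\sha(\bfk[\Delta X])$ to an order-$n$ target automatically sends $x^{(n+1)}$ to $d^{n+1}(f(x))=0$.

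Finally, since $\theta$ of Proposition~\mref{pp:comp} is a differential Rota--Baxter algebra isomorphism, it sends each $x^{(n+1)}=d^{n+1}(x)$ on the left to $d^{n+1}(x)$ on the right, and therefore carries $J_n$ bijectively onto $K_n$. Passing to the quotients yields the desired linear bijection
$$\theta_n\colon\sha(\bfk[\Delta_n X])\longrightarrow\bfk\cmapm{X}/I_{DRB,n},$$
with the explicit formula on differential Rota--Baxter monomials inherited from that of $\theta$. The main obstacle is the identification $\sha(\bfk[\Delta X])/J_n\cong\sha(\bfk[\Delta_n X])$: a direct computation inside $\sha$ is awkward because elements of $J_n$ involve arbitrarily complicated nested tensors of products containing the $x^{(n+1)}$, and this is precisely why the universal-property argument sketched above is preferable to a hands-on quotient calculation.
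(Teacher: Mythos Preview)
Your approach is essentially the paper's: start from the differential Rota--Baxter isomorphism $\theta$ of Proposition~\mref{pp:comp}, observe that $\theta$ restricts to the identity on $X$ and hence matches the operated ideals generated by $\{d^{n+1}(x)\mid x\in X\}$ on the two sides, and pass to quotients. The paper's proof is a two-sentence version of exactly this.

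One point to tighten in your universal-property step: $\sha(\bfk[\Delta_n X])$ is \emph{not} an ``order-$n$'' differential Rota--Baxter algebra in the sense that $d^{n+1}$ vanishes identically---the paper explicitly warns (just after Theorem~\mref{thm:freediffrb}) that $d^{n+1}(u)=0$ holds only for $u\in X$, not for longer tensors or for products. Hence neither $\sha(\bfk[\Delta_n X])$ nor $\sha(\bfk[\Delta X])/J_n$ can be the free object in the category of order-$n$ differential Rota--Baxter algebras, and your phrase ``order-$n$ target'' does not quite characterize the right class. The universal property both objects \emph{do} share is: universal among commutative differential Rota--Baxter algebras $(R,d,P)$ equipped with a set map $f\colon X\to R$ satisfying $d^{n+1}(f(x))=0$ for each $x\in X$ (a condition on the images of the generators only). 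With this corrected formulation your argument goes through; the paper sidesteps the issue by simply asserting that the quotients correspond.
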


\begin{proof}
The map $\theta_n$ is obtained by starting from the isomorphism $\theta: \sha(\bfk[\Delta X]) \cong \bfk\frakC(X)/I_{DRB}$ and then taking the quotients of both the domain and range by the operated ideal generated by $d^{n+1}(x), x\in X$. Since $\theta$ restricted to the identity on $X$. The corollary follows.
\end{proof}

Define the {\bf reduction map}
\begin{equation}
\red:=\red_n:=\theta_n^{-1}\circ \rho: \bfk\,\cmapm{X} \to \bfk\cmapm{Y}/I_{DRB,n} \to \sha(\bfk[\Delta_n X]).
\mlabel{eq:red}
\end{equation}
It reduces any bracketed monomial to a DRB monomial. For example, if $u, v\in \cm(X)$, then
$$\red(\lc u\rc \lc v\rc)=1\ot u\ot v + 1\ot v\ot u +\lambda \ot uv.$$

\section{Weakly monomial order}
\mlabel{sec:mon}
In this section, we will give a weak form of the monomial order on filtered pieces of the set of differential Rota-Baxter monomials. It will be sufficient for us to establish the composition-diamond lemma for integro-differential algebras.

Let $Y$ be a set with well order $\leq_Y$. Define the {\bf length-lexicographic order} $\leq_{Y,\lex}^*$ on the free monoid $M(Y)$ by
\begin{equation}
u<_{Y,\lex}^* v \Leftrightarrow \left\{\begin{array}{l} \ell< m, \\
\text{or } \ell=m \text{ and } \exists 1\leq i_0\leq \ell \text{ such that } u_i=v_i \text{ for } 1\leq i<i_0 \text{ and } u_{i_0}<v_{i_0}, \end{array}\right.
\mlabel{eq:lex}
\end{equation}
where $u=u_1\cdots u_\ell$ and $v=v_1\cdots v_m$ with $u_i\in Y, 1\leq i\leq \ell, v_j\in Y, 1\leq j\leq m, m, n\geq 1$.
It is well-known~\mcite{BN} that $\leq_{Y,\lex}^*$ is still a well order.
An element $1\neq u$ of the free commutative monoid $\cm(Y)$ can be uniquely expressed as
\begin{equation}
u = u_0^{j_0} \cdots u_k^{j_k}, \text{ where } u_0,\cdots,u_k\in
Y, j_0,\cdots, j_k \in \mathbb{Z}_{\geq 1} \text{ and } u_0
> \cdots > u_k. \mlabel{eq30}
\end{equation}
This expression is called the {\bf standard form} of $u$. If $k= -1$, we take $u\in \bfk$ by convention.

Any $1\neq u\in \cm(Y)$ can also be expressed uniquely as
$$u=u_1\cdots u_\ell,\ u_1\geq u_2\geq \cdots \geq u_\ell \in Y.$$
With this notation, $\cm(Y)$ can be identified with a subset of the free monoid $M(Y)$ on $Y$. Then the well order $<_{Y,\lex}^*$ on $M(Y)$ restricts to a well order on $\cm(Y)$.

\begin{lemma}\label{mul diff}
Let $(Y,\leq_Y)$ is a well-ordered set and $u,v\in \cm(Y)$. If $u<v$, then $uw \leq_{Y,\lex}^* vw$ for $w\in \cm(Y)$. \mlabel{lemma:mul diff}
\end{lemma}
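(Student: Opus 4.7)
The plan is to prove the stronger statement $uw <_{Y,\lex}^* vw$, which is equivalent to the stated $\leq_{Y,\lex}^*$ because cancellation in the free commutative monoid forces $uw \neq vw$ whenever $u \neq v$. I would split according to the two clauses in the definition~(\mref{eq:lex}) of the length-lexicographic order. If $|u| < |v|$, then $|uw| = |u| + |w| < |v| + |w| = |vw|$, and the length clause immediately yields $uw <_{Y,\lex}^* vw$.

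The content lies in the case $|u| = |v| =: n$. Writing $u = u_1 \cdots u_n$ and $v = v_1 \cdots v_n$ in the decreasing standard form of~(\mref{eq30}), choose the first index $i_0$ with $u_{i_0} \neq v_{i_0}$ and set $b := v_{i_0} >_Y u_{i_0}$. The key multiset observation I would establish is: because $u$ is non-increasing, $u_j \leq u_{i_0} < b$ for every $j \geq i_0$; combining this with $u_j = v_j$ for $j < i_0$ shows that the multiset of entries of $u$ strictly greater than $b$ equals the corresponding multiset for $v$, whereas $v$ contains strictly more copies of $b$ than $u$ does (at least the extra one at position $i_0$, since $u$ has no $b$'s in positions $\geq i_0$).

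Next I would pass to $uw$ and $vw$, which amounts to taking multiset union with $w$ and re-sorting into decreasing order. The multisets of entries $> b$ in $uw$ and $vw$ remain equal; let $k$ be their common cardinality. Then the first $k$ letters of the standard forms of $uw$ and $vw$ agree. At position $k+1$ the standard form of $vw$ is $b$, while that of $uw$ is some letter $\leq b$. If it is strictly less than $b$, we are done. Otherwise both are $b$, and we iterate: since $vw$ has strictly more copies of $b$ than $uw$ (the $w$-contribution being identical on both sides), the copies of $b$ in the standard form of $uw$ run out strictly before those in $vw$, yielding a position at which $uw$ has a letter $<_Y b$ while $vw$ still has $b$, and hence $uw <_{Y,\lex}^* vw$.

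The main obstacle is really only this bookkeeping: keeping careful track of the multiplicities of entries equal to $b$ on each side so as to locate the decisive position. Everything else is a direct application of the definitions, and no deeper structural input is required.
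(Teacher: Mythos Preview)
Your argument is correct. The length case is immediate, and in the equal-length case your multiset bookkeeping around the pivot $b=v_{i_0}$ is sound: from $u_j=v_j$ for $j<i_0$ and $u_j\le u_{i_0}<b$ for $j\ge i_0$ one indeed gets that the multisets of entries $>b$ in $u$ and $v$ agree, while $v$ carries strictly more copies of $b$; both facts survive multiset union with $w$, and the sorted-word comparison then terminates exactly where you say.

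The paper, however, proceeds quite differently. It identifies $\cm(Y)$ with the set $\calf$ of finitely supported functions $f\colon Y\to\ZZ_{\ge 0}$ via the exponent map $u\mapsto f_u$, so that multiplication in $\cm(Y)$ becomes pointwise addition in $\calf$. Under this identification the length-lexicographic order becomes a degree-then-pointwise order, and since $f_{uw}=f_u+f_w$, both the degree and every pointwise value $f_u(y)-f_v(y)$ are unchanged upon adding $f_w$; translation invariance of the order is then a one-line observation, yielding in fact the two-sided equivalence $u<v\Leftrightarrow uw<vw$. Your approach trades this abstraction for an explicit combinatorial pivot argument: it is longer and requires the position-tracking you describe, but it stays entirely within the word model and avoids setting up (and verifying) the correspondence between the two descriptions of the order. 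Either route is perfectly adequate for the lemma.
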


\begin{proof}
Such a result is well-known for free noncommutative monoid. The proof for the commutative case is different and we sketch a proof for completeness.

From the standard decomposition of $u\in \cm(Y)$ in Eq.~(\mref{eq30}), $u$ can be expressed uniquely as a function
\begin{equation} f:=f_u: Y\to \ZZ_{\geq 0}, f_u(y)=\left\{\begin{array}{ll} j_i, & y=u_i, 1\leq i\leq k, \\ 0, & \text{otherwise}. \end{array}\right .
\mlabel{eq:cmf}
\end{equation}
Thus $\cm(Y)$ can be identified with
$$ \calf:=\{f: Y\to \ZZ_{\geq 0}\,|\, \supp (f):=Y\backslash f^{-1}(0) \text{ is finite } \}$$
with $1\in \cm(Y)$ corresponding to $f_1\equiv 0$. Denote $\deg(f):=\sum_{y\in Y} f(y)$. Under this identification, the order $\leq_{Y,
\lex}^*$ on $\cm(Y)$ is identified with the order $\leq$ on $\calf$ defined by
\begin{equation} \label{eq:cmford}
f< g \Leftrightarrow \left\{\begin{array}{l} \deg(f)<\deg(g)\\
\text{or } \deg(f)=\deg(g) \text{ and } \exists y_0\in Y \text{ such that } f(y)=g(y) \text{ for } y<y_0 \text{ and } f(y_0)< g(y_0).
\end{array} \right .
\end{equation}

Let $u, v, w\in \cm(Y)$ be given. We apply the identification of $u, v, w$ with $f_u, f_v, f_w\in \calf$ given in Eq~(\mref{eq:cmf}). We note that $f_{uw}=f_u +f_w$ and $f_{vw}=f_v+f_w$. Thus we have
$$ \deg(f_{uw})=\deg(f_u)+\deg(f_w), \deg(f_{vw})=\deg(f_v)+\deg(f_w), \text{ and } f_u(y)<f_v(y) \Leftrightarrow f_{uw}(y)<f_{vw}(y).$$
Then it follows that $f_u<f_v$ if and only if $f_{uw}<f_{vw}$. This proves the lemma.
\end{proof}

For a set $X$, recall that $\Delta X =\{x^{(k)}
\mid x\in X, k\geq 0\}$ and $\Delta_n X :=\{x^{(k)} \mid x\in X, 0\leq k\leq n\}$
for $n\geq 0$. Then $\cm(\Delta_n X), n\geq 0,$ define an increasing filtration on $\cm(\Delta X)$ and hence give a filtration $\calb(\Delta_n X)\subseteq \calb(\Delta X)$.
Elements of $\calb(\Delta_n X)$ are called
{\bf DRB monomials of order $n$}.

\begin{defn}
{\rm Let $X$ be a set, $\star$ a symbol not in $X$ and $\Delta_n
X^\star := \Delta_n (X\cup \{\star\})$.
\begin{enumerate}
\item
By a {\bf
$\star$-DRB monomial on $\Delta_n X$},
we mean any expression in $\calb(\Delta_n X^\star)$ with exactly one
occurrence of $\star$. The set of all $\star$-DRB monomials on $\Delta_n X$ is denoted by
$\calb^\star(\Delta_n X)$.
\item
For $q\in \calb^\star(\Delta_n X)$ and
$u\in \calb(\Delta_n X)$, we define
$$q|_u := q|_{\star \mapsto u}$$
to be the bracketed monomial in $\cmapm{\Delta_n X}$ obtained by replacing the letter $\star$ in $q$ by
$u$, and call $q|_u$ a {\bf $u$-monomial on $\Delta_n X$}.
\item
Further, for $s=\sum_i c_i u_i \in \bfk
\calb(\Delta_n X)$, where $c_i\in \bfk$, $u_i\in \calb(\Delta_n X)$
and $q\in \calb^\star(\Delta_n X)$, we define
$$q|_s := \sum_i c_i q|_{u_i},$$
which is in $\bfk\,\cmapm{\Delta_n X}$.
\end{enumerate}
}
\end{defn}
We note that a $\star$-DRB monomial $q$ is a DRB monomial in $\Delta_n X^\star$ while its substitution $q|_u$ might not be a DRB monomials. For example, for $q=P(x_1)\star\in \calb(\Delta_n X^\star)$ and $u=P(x_2)\in \calb(\Delta_n X)$ where $x_1, x_2\in X$, the $u$-monomial $q|_u=P(x_1)P(x_2)$ is no longer in $\calb(\Delta_n X)$.

\begin{lemma} \label{operator ideal}\mlabel{lemma:operator ideal}
Let $S$ be a subset of $\bfk\frakC(\Delta_n(X))$ and $\mathrm{Id(S)}$
be the operated ideal of $\bfk \frakC(\Delta_n(X))$ generated by $S$.
Then
$$\mathrm{Id(S)} = \left\{ \sum_{i=1}^{k} c_i q_i | _{s_i} \,\Big|\,
c_i\in \bfk,  q_i\in \frakC^{\star}(\Delta_n X), s_i\in S, 1\leq
i\leq k, k\geq 1 \right\}.$$
\end{lemma}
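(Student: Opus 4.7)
The plan is to prove this by two separate inclusions, both of which are essentially formal verifications using the structure of the free commutative operated algebra $\bfk\frakC(\Delta_n X)$. Denote the right-hand side by $I$.

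For the inclusion $I \subseteq \mathrm{Id}(S)$, I would argue that each generator $q|_s$ on the right-hand side already sits in the operated ideal $\mathrm{Id}(S)$. Since $s \in S \subseteq \mathrm{Id}(S)$ and operated ideals are closed under the operators $\lc\cdot\rc_d$, $\lc\cdot\rc_P$, and multiplication by elements of $\bfk\frakC(\Delta_n X)$, I would induct on the depth of the $\star$-DRB monomial $q$ viewed as an element of $\frakC(\Delta_n X^\star)$. At depth zero, $q = w \star$ for some $w \in \cm(\Delta_n X)$, so $q|_s = ws \in \mathrm{Id}(S)$. At higher depth, decompose $q$ as a commutative product whose unique $\star$-carrying factor is either $\star$ itself (reducing to the base case) or has the form $\lc q' \rc_\omega$ for some $\omega \in \{d, P\}$ and some $q' \in \frakC^\star(\Delta_n X)$ of strictly smaller depth, in which case $q|_s = w \lc q'|_s\rc_\omega$ with $q'|_s \in \mathrm{Id}(S)$ by induction.

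For the reverse inclusion $\mathrm{Id}(S) \subseteq I$, I would show that $I$ is itself an operated ideal containing $S$. Taking $q = \star$ (which is a $\star$-DRB monomial) shows $S \subseteq I$. Closure under $\bfk$-linear combinations is built into the definition of $I$. For closure under multiplication, given $u \in \frakC(\Delta_n X)$ and a generator $q|_s$ with $q \in \frakC^\star(\Delta_n X)$, the commutative product $u \cdot q$ still lies in $\frakC^\star(\Delta_n X)$ and satisfies $u \cdot (q|_s) = (u \cdot q)|_s \in I$; extend by $\bfk$-linearity. For closure under the operators, simply observe $\lc q|_s \rc_\omega = \lc q \rc_\omega \big|_s$, where $\lc q \rc_\omega \in \frakC^\star(\Delta_n X)$ for $\omega \in \{d, P\}$.

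The argument is essentially a book-keeping exercise exploiting the freeness of $\frakC(\Delta_n X)$ as an operated monoid established in Proposition~\mref{pp:freetm}, so there is no real obstacle. The only point requiring mild care is ensuring that the substitution $q|_s$ commutes with the operated algebra operations in the sense used above, which is immediate from the fact that $\star$ is a formal variable and the operations on $\frakC(\Delta_n X^\star)$ treat it as such.
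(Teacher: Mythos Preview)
Your proposal is correct and follows the same two-inclusion strategy as the paper's proof, which simply states that the right-hand side is contained in $\mathrm{Id}(S)$ and that the right-hand side is itself an operated ideal containing $S$. You have merely supplied the routine details (the induction on depth for one inclusion, and the verification of closure under multiplication and the operators for the other) that the paper leaves implicit.
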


\begin{proof}
It is easy to see that the right hand side is contained in the left
side. On the other hand, the right hand side is already an operated
ideal of $\bfk \frakC(\Delta_n(X))$ containing $S$.
\end{proof}

\begin{defn}
{\rm If $q = p|_{d^\ell(\star)}$ for some $p\in \calb^\star(\Delta_n
X)$ and $\ell\in \mathbb{Z}_{\geq 1}$, then we call $q$ a {\bf
type I $\star$-DRB monomial}. Let $\calb_{I}^\star(\Delta_n X)$ denote the set of type I $\star$-DRB monomials on
$\Delta_n X$ and call
$$\calb_{II}^\star(\Delta_n X) :=
\calb^\star(\Delta_n X) \setminus \calb_{I}^\star(\Delta_n X)$$
the set of {\bf type II $\star$-DRB monomials}. }
\end{defn}

\begin{lemma}\mlabel{threecases}
Any element $q\in \calb^\star(\Delta_n X)$ is one of the following three forms
\begin{enumerate}
 \item  $q\in \calb_{I}^\star(\Delta_n X)$, or
 \item $q = s\star t$ with $s\in \cm(\Delta_n X)$ and $t\in
\calb(\Delta_n X)$, or
\item $q = sP(p)$ for some $s\in
\cm(\Delta_n X)$ and $p \in \calb^\star_{II}(\Delta_n X)$.
\end{enumerate}
\end{lemma}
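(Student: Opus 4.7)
The plan is to exploit the explicit form of a $\star$-DRB monomial given by the identification in Eq.~(\mref{eq:rbid}) and then do a bookkeeping case analysis on where the unique $\star$ sits and in what derivative-order it appears. Concretely, any $q\in\calb^\star(\Delta_n X)$ may be written uniquely as
$$q = u_0\, P\!\bigl(u_1\, P\bigl(u_2\, P(\cdots P(u_k)\cdots)\bigr)\bigr), \qquad u_i\in\cm(\Delta_n X^\star),\ 0\le i\le k,$$
and by hypothesis exactly one letter of $\Delta_n\{\star\}=\{\star^{(\ell)}\mid 0\le\ell\le n\}$, say $\star^{(\ell)}$, occurs among the $u_i$; let $i_0$ be the index of the unique $u_{i_0}$ that contains it.

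The first case is $i_0=0$: factor $u_0 = s\cdot\star^{(\ell)}$ with $s\in\cm(\Delta_n X)$. If $\ell\ge 1$, set $p := s\cdot\star\cdot P(u_1 P(\cdots P(u_k)\cdots))\in\calb^\star(\Delta_n X)$; then $q = p|_{d^\ell(\star)}$, so $q\in\calb^\star_I(\Delta_n X)$ and form~(a) holds. If $\ell=0$, put $t := P(u_1 P(\cdots P(u_k)\cdots))\in\calb(\Delta_n X)$ when $k\ge 1$ and $t:=1$ when $k=0$; then $q = s\,\star\,t$, which is form~(b).

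The second case is $i_0\ge 1$: take $s := u_0\in\cm(\Delta_n X)$ (which is $\star$-free) and $p := u_1 P(u_2 P(\cdots P(u_k)\cdots))\in\calb^\star(\Delta_n X)$, so that $q = s\,P(p)$. If $p\in\calb^\star_{II}(\Delta_n X)$ we get form~(c) directly. The only remaining step is to rule out the leftover possibility $p\in\calb^\star_I(\Delta_n X)$: writing $p = p'|_{d^\ell(\star)}$ with $\ell\ge 1$, the identity
$$q = s\,P\!\bigl(p'|_{d^\ell(\star)}\bigr) = \bigl(s\,P(p')\bigr)\big|_{d^\ell(\star)}$$
shows that $q$ itself is type I, placing it in form~(a). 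Hence every $q$ falls into (a), (b), or (c).

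I do not expect any real obstacle here; the argument is a routine structural induction read off from the tensor-product description of $\calb(\Delta_n X^\star)$. The only care required is the handling of the degenerate cases ($k=0$, yielding $t=1$; and the subtle substitution commutation in the second case that lets a type I $p$ be absorbed into a type I $q$). Once the identification~(\mref{eq:rbid}) is invoked, the lemma is essentially combinatorial.
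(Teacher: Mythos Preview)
Your proof is correct and follows essentially the same approach as the paper: both write $q$ in its tensor (nested-$P$) form, locate the unique factor $u_{i_0}$ containing $\star^{(\ell)}$, and split into the three forms according to the position $i_0$ and the derivative order $\ell$. The only cosmetic difference is that the paper branches first on $\ell\ge 1$ versus $\ell=0$ and then on $i=0$ versus $i\ge 1$, whereas you branch on $i_0$ first and then absorb the residual case ``$i_0\ge 1$, $p$ type~I'' back into form~(a) via the substitution identity $sP(p'|_{d^\ell(\star)})=(sP(p'))|_{d^\ell(\star)}$; the content is the same.
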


\begin{proof}
Any element $q\in \calb^{\star}(\Delta_n X)$ is of the form $u_0\ot u_1\ot \cdots \ot u_k$ with $u_i\in \cm(\Delta_n X), 1\leq i\leq k,$ except a unique $u_i$ which is in $\cm(\Delta_n X^\star)$ with exactly one occurrence of $\star$. In turn, this unique $u_i\in \cm(\Delta_n X^\star)$ is of the form $u_{i1}\cdots u_{im}$ with $u_{ij}\in \Delta_n X, 1\leq j\leq m,$ except a unique $u_{ij}$ which is in $\Delta_nX^\star$ with exactly one occurrence of $\star$. Thus this unique $u_{ij}\in \Delta_nX^\star$ is of the for $d^\ell(\star)$ for some $\ell\geq 0$. If $\ell\geq 1$, then $q$ is of type I. If $\ell=0$, then $d^{\ell}(\star)=\star$. So if $i=0$, namely this $\star$ is in $u_0$, then $q=(u_{01}\cdots u_{0(j-1)}\star u_{0(j+1)} \cdots u_{0m})\ot u_1\ot \cdots u_k$ is of the form $s\star t$ with $s=u_{01}\cdots u_{0(j-1)}u_{0(j+1)}\cdots u_{0m}$ and $t=1\ot u_2\cdots u_k$. If $i\geq 1$, then $q=sP(p)$, where $p:=u_2\ot\cdots u_k \in
\calb^\star_{II}(\Delta_n X)$. This proves the lemma.
\end{proof}

\begin{defn}{\rm
Let $X$ be a set, $\star_1$, $\star_2$ two distinct symbols not in
$X$ and $\Delta_n X^{\star_1, \star_2} := \Delta_n (X\cup
\{\star_1,\star_2\})$. We define a {\bf
$(\star_1,\star_2)$-DRB monomial on
$\Delta_n X$ } to be an expression in $\calb(\Delta_n
X^{\star_1,\star_2})$ with exactly one occurrence of $\star_1$ and
exactly one occurrence of $\star_2$. The set of all $(\star_1,
\star_2)$-DRB monomials on $\Delta_n X$
is denoted by $\calb^{\star_1, \star_2}(\Delta_n X)$. For $q\in
\calb^{\star_1, \star_2}(\Delta_n X)$ and $u_1, u_2\in \bfk
\calb(\Delta_n X)$, we define
$$q|_{u_1,u_2} := q|_{\star_1 \mapsto u_1, \star_2 \mapsto u_2}$$
to be the bracketed monomial obtained by
replacing the letter $\star_1$ (resp. $\star_2$) in $q$ by $u_1$
(resp. $u_2$) and call it a {\bf $(u_1,u_2)$-bracketed monomial on $\Delta_n X$ }. }
\end{defn}

A $(u_1,u_2)$-DRB monomial on $\Delta_n
X$ can also be recursively defined by
\begin{equation} \label{eq12}
q|_{u_1,u_2} := (q^{\star_1}|_{u_1})|_{u_2},
\end{equation}
where $q^{\star_1}$ is $q$ when $q$ is regarded as a
$\star_1$-DRB monomial on the set
$\Delta_n X^{\star_2}$. Then $q^{\star_1}|_{u_1}$ is in
$\calb^{\star_2}(\Delta_n X)$. Similarly, we have
\begin{equation} \label{eq13}
q|_{u_1,u_2} := (q^{\star_2}|_{u_2})|_{u_1}.
\end{equation}

Let $X$ be a well-ordered set and let $Y=\Delta X$. Let $n\geq 0$ be given. For $x_0^{(i_0)}, x_1^{(i_1)}\in \Delta X$ (resp. $\Delta_n X$) with $x_0, x_1\in X$, define
\begin{equation}
x_0^{(i_0)} \leq x_1^{(i_1)} \left(\text{resp.} x_0^{(i_0)}\leq_n x_1^{(i_1)}\right) \Leftrightarrow (x_0,-i_0) \leq (x_1, -i_1) \quad \text{
lexicographically}.
\mlabel{eq:difford}
\end{equation}
For example $x^{(2)} < x^{(1)}< x$. Also, $x_1<x_2$ implies
$x_1^{(2)} < x_2^{(2)}$. Then by~\mcite{BN}, the order $\leq_n$ is a well order on $\Delta_n X$ and hence is extended to a well order on $\cm(\Delta_n X)$ by Eq.~(\mref{eq:lex}) which we still denote by $\leq_n$.

We next extend the well order $\leq_n$ on $\cm(\Delta_n X)$ defined above to $\calb(\Delta_n X)$. Note that $$\calb(\Delta_n X)= \{u_0\ot u_1\ot \cdots u_k\,|\, u_i\in \cm(\Delta_n X), 1\leq i\leq k, k\geq 0\}=\sqcup_{k\geq 1} \cm(\Delta_nX)^{\ot k} $$
can be identified with the free semigroup on the set $\cm(\Delta_nX)$. Thus the well order $\leq_n$ on $\cm(\Delta_nX)$ extends to a well order $\leq_{n,\lex}^*$~\mcite{BN} which we will still denote by $\leq_n$ for simplicity. More precisely, for any $u=u_0\otimes \cdots \otimes u_k \in
\cm(\Delta_n X)^{\ot (k+1)}$ and $v = v_0 \otimes \cdots
\otimes v_\ell \in \cm(\Delta_n X)^{\ot (\ell+1)}$, define
\begin{equation}
u \leq_{n} v \text { if }(k+1, u_0, \cdots,u_k) \leq
(\ell+1, v_0, \cdots,v_\ell) \text{
lexicographically}. \mlabel{eq7}
\end{equation}

This is the order on $\frakB(\Delta_nX)$ that we will consider in this paper.

\begin{defn}
Let $\leq_n$ be the well order on $\frakB(\Delta_nX)$ defined in Eq.~(\mref{eq7}).
Let $q\in \calb^\star(\Delta_n X)$ and $s\in
\mathbf{k}\calb(\Delta_n X)$.
\begin{enumerate}
\item
For any $0\neq f\in \bfk \calb(\Delta_n X)$, let $\lbar{f}$ denote the leading term of $f$:
$f = c \overline{f} + \sum_{i} c_iu_i$, where $0\neq  c, c_i\in
\bfk$, $u_i\in \calb(\Delta_n X)$, $u_i< \overline{f}$. $f$ is called {\bf monic} if $c=1$.
\item
Denote
$$\overline{q |_s} := \overline{\red(q|_{s}}),$$
where $\red: \bfk\frakC(\Delta_nX)\to \sha(\Delta_nX)=\bfk \calb(\Delta_n X)$ is the reduction map in Eq.~(\mref{eq:red}).
\item
The element $q|_s\in \bfk\,\cmapm{\Delta_nX}$ is called {\bf normal} if
$q|_{\lbar{s}}$ is in $\calb(\Delta_n X)$. In other words, if $\red(q|_{\lbar{s}}) = q|_{\lbar{s}}$.
\end{enumerate}
\mlabel{normaldef}
\end{defn}

\begin{remark} \begin{enumerate}
\item By definition, $q|_s$ is normal if and only if
$q|_{\lbar{s}}$ is normal if and only if the $\lbar{s}$-DRB monomial $q|_{\lbar{s}}$ is already a DRB monomial, that is, no further reduction in $\sha(\Delta_n X)$ is possible.
\item
Examples of not normal (abnormal) $s$-DRB monomials are
\begin{enumerate}
\item $q=\star P(x)$ and $\bar{s}=P(x)$, giving $q|_{s}=P(x)P(x)$ which is reduced to $P(xP(y))+P(P(x)y)+\lambda P(xy)$ in $\sha(\Delta_n X)$;
\item $q=d(\star)$ and $\bar{s}=P(x)$, giving $q|_{\bar{s}}=d(P(x))$ which is reduced to $x$ in $\sha(\Delta_n X)$;
\item $q=d(\star)$ and $\bar{s}=x^2$, giving $q|_{\bar{s}}=d(x^2)$ which is reduced to $2xx^{(1)}+\lambda (x^{(1)})^2$ in $\sha(\Delta_n X)$;
\item $q=d^n(\star)$ and $\bar{s}=d(x)$, giving $q|_{\bar{s}}=d^{n+1}(s)$ which is reduced to $0$ in $\sha(\Delta_n X)$.
\end{enumerate}
\end{enumerate}
\end{remark}

\begin{defn}\label{defweakmonomial}
A {\bf weakly monomial order} on $\calb(\Delta_n X)$ is a well order
$\geq$ satisfying the following condition:
$$
\text{for } u, v\in \calb(\Delta_n X), u > v\, \Rightarrow
\overline{q|_u} > \overline{q|_v} \text{ if either } q \in \calb^{\star}_{II}(\Delta_n X), \text{ or } q \in
\calb^{\star}_{I}(\Delta_n X) \text{ and } q|_u  \text{ is normal}.
$$
\end{defn}

We shall prove that the order defined in Eq.~(\ref{eq7}) is a weakly
monomial order on $\calb(\Delta_n X)$. We need the following lemmas.

\begin{lemma}
Let $\ell\geq 1$ and $s\in
\calb(\Delta_n X)$. Then $d^\ell(\star)|_{s}$ is normal if and only if $s\in
\Delta_{n-\ell} X$. \mlabel{lemma:diffnormal}
\end{lemma}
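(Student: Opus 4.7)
The plan is to exploit the fact that, after substitution, $d^\ell(\star)|_s$ is the literal bracketed monomial $d^\ell(s) = \lc \lc \cdots \lc s \rc_d \cdots \rc_d \rc_d$ in $\bfk\cmapm{X}$, and to characterize when this expression already coincides with an embedded DRB monomial from $\calb(\Delta_n X)$, under the identification $x^{(k)} = d^k(x)$ from Eq.~(\mref{eq:rbid}).

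For the direction $(\Leftarrow)$, writing $s = x^{(j)}$ with $x\in X$ and $0 \leq j \leq n-\ell$, one directly computes $d^\ell(s) = d^\ell(d^j(x)) = d^{\ell+j}(x) = x^{(\ell+j)}$, which lies in $\Delta_n X \subset \cm(\Delta_n X) \subset \calb(\Delta_n X)$ since $\ell + j \leq n$. Hence $d^\ell(\star)|_s$ is normal by Definition~\mref{normaldef} (note $\bar s = s$ here since $s$ is a monomial).

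For the converse $(\Rightarrow)$, I would exploit the uniqueness of factorization of bracketed monomials in the free commutative operated algebra $\bfk\cmapm{X}$ guaranteed by Proposition~\mref{pp:freetm}: every element of $\cmapm{X}$ factors uniquely as a commutative product of indecomposable factors drawn from $X \sqcup \lc\cmapm{X}\rc_d \sqcup \lc\cmapm{X}\rc_P$. The element $d^\ell(s) = \lc d^{\ell-1}(s) \rc_d$ consists of a single $d$-factor. On the other side, an embedded DRB monomial $u_0 P(u_1 P(\cdots P(u_k) \cdots))$ decomposes into indecomposable factors of three types: letters $x \in X$ (from $x^{(0)}$ factors in $u_0$), $d$-factors $\lc d^{j-1}(x) \rc_d$ (from each $x^{(j)} \in \Delta_n X$ in $u_0$ with $j \geq 1$), and a $P$-factor $\lc u_1 P(\cdots) \rc_P$ whenever $k \geq 1$. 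Matching factorizations forces $k = 0$ and $u_0$ to be a single letter $x^{(m)} \in \Delta_n X$ with $m \geq 1$, so that $d^\ell(s) = d^m(x)$ with $m \leq n$. Peeling off the $\ell$ outer $d$-brackets by freeness once more yields $m \geq \ell$ and $s = d^{m-\ell}(x) = x^{(m-\ell)}$ with $m-\ell \leq n-\ell$, so $s \in \Delta_{n-\ell} X$.

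The main obstacle is making the case analysis of the embedded DRB monomial watertight: one must simultaneously exclude $s = 1$ (no top-level factors on the right), $s = u_0$ with $u_0$ a product of two or more elements of $\Delta_n X$ (multiple top-level factors), and $s$ having depth $\geq 2$ (a top-level $P$-factor appears). The unique factorization in the free commutative operated monoid $\cmapm{X}$ is precisely what dispatches all three cases at once, and this is the one step that requires care rather than direct calculation.
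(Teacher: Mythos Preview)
Your argument is correct and reaches the same conclusion, but the route differs from the paper's. The paper proves the converse by a direct case split on $\bar{s}$: either $\dep(\bar{s})\geq 2$, or $\dep(\bar{s})=1$ with $\deg_{\Delta_n X}(\bar{s})\geq 2$, or $\bar{s}\in \Delta_n X\setminus \Delta_{n-\ell}X$; in each case one sees immediately that $d^\ell(\bar{s})$ requires a reduction (via the formula for $d_A$, via Leibniz, or via $d^{n+1}(x)=0$ respectively), hence is not normal. Your approach instead invokes the free commutative operated monoid structure of $\cmapm{X}$ and the unique factorization into indecomposables from $X\sqcup\lc\cmapm{X}\rc_d\sqcup\lc\cmapm{X}\rc_P$, then matches the single $d$-factor $d^\ell(s)$ against the embedded image of a DRB monomial. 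This is sound: the construction of $\cmapm{X}$ as a directed union of free commutative monoids $\cm(X\sqcup(\sqcup_\omega\lc\frakC_{n-1}\rc_\omega))$ does yield unique factorization, and the peeling-off step is justified by the injectivity of $\lc\,\cdot\,\rc_d$.

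What each approach buys: the paper's case split is shorter and more elementary, staying entirely within $\calb(\Delta_n X)$ and the reduction rules, which matches how the lemma is used downstream. Your structural argument is cleaner conceptually and handles all cases (including $s=1$, which the paper's three-case enumeration technically omits, since $1$ has depth $1$, degree $0$, and is not in $\Delta_n X$) uniformly via factorization. Either is acceptable; for a lemma this small, the paper's direct analysis is the expected idiom.
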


\begin{proof}
If $\lbar{s} \in \Delta_{n-\ell} X$, then $d^{\ell}(\lbar{s})$ is in $\Delta_nX$ and hence
$d^\ell(\star)|_{s}$ is normal. Conversely, if $\lbar{s} \notin
\Delta_{n-\ell} X$, then either $\mathrm{dep}(\lbar{s}) \geq 2$, or
$\mathrm{dep}(\lbar{s}) =1$ and $\mathrm{deg}_{\Delta_n X}(u)\geq
2$, or $\lbar{s} \in \Delta_n X \setminus \Delta_{n-\ell} X$. In all
these cases, $d^\ell(\star)|_{s}$ is not normal.
\end{proof}

\begin{lemma}
\label{leading diffRB} Let $\leq_n$ be the order defined in Eq.~(\mref{eq7}). Let $u, v\in \calb(\Delta_n X)$ and $\ell\in \mathbb{Z}_{\geq 1}$. If $u >_n v$ and $d^\ell(\star)|_u$ is normal,
then $\lbar{d^\ell(u)} >_n \lbar{d^\ell(v)}$. \mlabel{lemma: leading
diffRB}
\end{lemma}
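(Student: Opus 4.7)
The plan is to turn the normality hypothesis into rigid structural information about $u$, then use the length-first structure of $\leq_n$ to force $v$ into a very narrow shape, and finally compare leading terms by direct inspection.

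First, I would apply Lemma~\mref{lemma:diffnormal} to the hypothesis that $d^\ell(\star)|_u$ is normal: since $u$ is a single basis monomial (so $\overline{u}=u$), this yields $u\in \Delta_{n-\ell}X$, i.e.\ $u=x^{(k)}$ for some $x\in X$ with $0\le k\le n-\ell$. Hence $d^\ell(u)=x^{(k+\ell)}\in \Delta_n X$ is already a DRB monomial, and $\overline{d^\ell(u)}=x^{(k+\ell)}$. Next, I would exploit the length-first comparison in Eq.~(\mref{eq7}): because $\dep(u)=1$, the inequality $v<_n u$ forces $\dep(v)\le 1$, so $v\in \cm(\Delta_n X)$; the analogous length-first clause of Eq.~(\mref{eq:lex}) then forces $v$ to have degree at most one as a commutative word. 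Thus $v=1$ or $v=y^{(j)}$ for some $y\in X$ with $0\le j\le n$.

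To finish, I would split into these two shapes. If $v=1$ then $d^\ell(v)=0$, and the assertion holds under the standard convention that the leading term of $0$ lies below every nonzero leading term. If $v=y^{(j)}$, then unwinding $y^{(j)}<_n x^{(k)}$ through Eq.~(\mref{eq:difford}) gives either $y<x$, or $y=x$ with $j>k$; and because $d$ acts on a single generator with no Leibniz contribution, $d^\ell(v)=y^{(j+\ell)}$ when $j+\ell\le n$ and $d^\ell(v)=0$ otherwise. In each subcase a short check using Eq.~(\mref{eq:difford}) verifies $\overline{d^\ell(v)}<_n x^{(k+\ell)}=\overline{d^\ell(u)}$: when $y<x$ the inequality is preserved at the leading letter, and when $y=x$ with $j>k$ one has $j+\ell>k+\ell$, so the reversed convention $x^{(a)}<x^{(b)} \Leftrightarrow a>b$ forces $x^{(j+\ell)}<_n x^{(k+\ell)}$.

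The main obstacle is conceptual rather than computational: one must keep straight the length-first extensions $\Delta_n X \hookrightarrow \cm(\Delta_n X) \hookrightarrow \calb(\Delta_n X)$ alongside the reversed order within $\Delta_n X$ itself, and convince oneself that once $v<_n u$ is granted, no higher-depth or higher-degree contribution can appear in $d^\ell(v)$ to spoil the leading-term comparison. Once these orders are cleanly untangled, the normality hypothesis collapses $u$ to a single generator of controlled derivation order and the proof reduces to the two-case inspection above.
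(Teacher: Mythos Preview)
Your proof is correct and follows essentially the same approach as the paper: both invoke Lemma~\mref{lemma:diffnormal} to pin down $u=x^{(k)}\in\Delta_{n-\ell}X$, then use the length-first structure of $\leq_n$ to force $v$ into $\Delta_n X$ (or $v=1$), and finish by a direct comparison via Eq.~(\mref{eq:difford}). The only notable difference is organizational: the paper wraps the argument in an induction on $\ell$, reducing $d^\ell$ to iterated applications of $d$, whereas you treat general $\ell$ in one stroke since $d^\ell(x^{(k)})=x^{(k+\ell)}$ is immediate on a single generator. Your version is slightly more explicit in handling the boundary case $v=1$ (and the possibility $d^\ell(v)=0$), which the paper's proof passes over in silence.
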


\begin{proof}
We prove the result by induction on $\ell$. We first consider $\ell
=1$ and prove $\lbar{d(u)} >_n \lbar{d(v)}$. Since $d(\star)|_u$ is normal, we have $u=x_1^{(i_1)}\in \Delta_{n-1} X$ by Lemma
\ref{lemma:diffnormal}. Since $u >_n v$, by the definition of $>_n$, we have we have $v=x_2^{(i_2)}\in \Delta_n X$ with either $x_1>x_2$ or $x_1=x_2$ and $i_1<i_2$. So $\lbar{d(u)} >_n \lbar{d(v)}$.

Next, suppose the result holds for $1\leq m <\ell$. Then by the induction hypothesis, we have
$$\lbar{d^{\ell}(u)} = \lbar{d(d^{\ell-1}(u))} = \lbar{d(\lbar{d^{\ell-1}(u)})} >_n \lbar{d(\lbar{d^{\ell-1}(v)})}=
\lbar{d(d^{\ell-1}(v))} = \lbar{d^{\ell}(v)}.$$
\end{proof}

\begin{prop}
The order $\leq_n$ defined in Eq.~(\ref{eq7}) is a weakly monomial order.
\mlabel{weakmonomial}
\end{prop}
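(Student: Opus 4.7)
The plan is to prove the type II case of Definition~\mref{defweakmonomial} first by induction on $\dep(q)$ via the trichotomy of Lemma~\mref{threecases}, and then deduce the type I case from it. By Lemma~\mref{threecases}, a type II $q$ takes one of two forms: either $q = s \star t$ with $s \in \cm(\Delta_n X)$ and $t \in \calb(\Delta_n X)$ (where $t = 1 \ot w_1 \ot \cdots \ot w_k$ when $\dep(q)=k+1>1$ and $t=1$ when $\dep(q)=1$), or $q = s P(p)$ with $s \in \cm(\Delta_n X)$ and $p \in \calb^{\star}_{II}(\Delta_n X)$ of strictly smaller depth. The second form is handled by the induction hypothesis $\overline{p|_u} >_n \overline{p|_v}$: the shuffle formula gives $\overline{P(f)} = 1 \ot \overline{f}$ for nonzero $f \in \bfk\calb(\Delta_n X)$ and $s \shpr (1 \ot g) = s \ot g$ for $g \in \calb(\Delta_n X)$, whence $\overline{q|_u} = s \ot \overline{p|_u}$ and $\overline{q|_v} = s \ot \overline{p|_v}$; these compare strictly under $\leq_n$ because the first entries tie and the tail comparison reduces to $\overline{p|_u} >_n \overline{p|_v}$.

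The main obstacle is the form $q = s \star t$, where $\red(q|_u) = \big((s u_0) \ot u_1 \ot \cdots \ot u_N\big) \shpr (1 \ot t_1 \ot \cdots \ot t_L)$ (with $u = u_0 \ot u_1 \ot \cdots \ot u_N$) involves a genuine mixable shuffle. The maximal-depth component, coming from pure shuffles, has tensor length $N + L + 1$ and dominates under $\leq_n$ by the depth clause, so $\overline{q|_u} = (s u_0) \ot \sigma_u$, where $\sigma_u$ is the lex-maximal interleaving of $(u_1, \ldots, u_N)$ with $(t_1, \ldots, t_L)$; similarly $\overline{q|_v} = (s v_0) \ot \sigma_v$. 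If $\dep(u) \neq \dep(v)$ or $u_0 \neq v_0$, the conclusion is immediate from the depth clause of $\leq_n$ combined with a strict form of Lemma~\mref{lemma:mul diff} (valid since $\cm(\Delta_n X)$ is cancellative). Otherwise $\dep(u) = \dep(v)$ and $u_0 = v_0$, so $u >_n v$ forces $(u_1, \ldots, u_N) >_n (v_1, \ldots, v_N)$ lexicographically, and a monotonicity argument---take $\sigma_v$, replace each $v_i$-entry by the corresponding $u_i$ to obtain a shuffle $\tau$ of $(u_1,\ldots,u_N)$ with $(t_1,\ldots,t_L)$; then $\tau >_n \sigma_v$ because the first differing entry carries $u_k > v_k$, while $\tau \leq_n \sigma_u$ by maximality---yields $\sigma_u >_n \sigma_v$ and hence $\overline{q|_u} >_n \overline{q|_v}$.

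For the type I case, let $q \in \calb^{\star}_{I}(\Delta_n X)$ with $q|_u$ normal, and choose $\ell \geq 1$ maximal with $q = p|_{d^{\ell}(\star)}$; maximality forces $p \in \calb^{\star}_{II}(\Delta_n X)$. By Lemma~\mref{lemma:diffnormal}, normality gives $u \in \Delta_{n-\ell} X$, so $d^{\ell}(u) \in \Delta_n X$ is already a DRB monomial, and Lemma~\mref{lemma: leading diffRB} yields $\overline{d^\ell(u)} >_n \overline{d^\ell(v)}$. Expanding $d^{\ell}(v) = \sum_i c_i w_i$ with $w_0 = \overline{d^{\ell}(v)}$ and $w_i <_n w_0$ for $i\geq 1$, the type II case for $p$ gives $\overline{p|_{w_i}} <_n \overline{p|_{w_0}}$, whence $\overline{p|_{d^{\ell}(v)}} = \overline{p|_{\overline{d^{\ell}(v)}}}$, and a final application of the type II case to $p$ produces $\overline{q|_u} = \overline{p|_{d^\ell(u)}} >_n \overline{p|_{\overline{d^\ell(v)}}} = \overline{q|_v}$.
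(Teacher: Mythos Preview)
Your proof is correct and follows the same architecture as the paper: you use the trichotomy of Lemma~\mref{threecases}, handle the type~II cases first (the $sP(p)$ form by induction on depth, the $s\star t$ form directly), and then reduce the type~I case to the type~II case via Lemmas~\mref{lemma:diffnormal} and~\mref{lemma: leading diffRB} exactly as the paper does.

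The one substantive difference is your treatment of $q=s\star t$. The paper splits this into four subcases according to whether $t$ and $u$ lie in $\cm(\Delta_n X)$ or in $\cm(\Delta_n X)P(\calb(\Delta_n X))$, and the hardest subcase~(1.4) is handled by a further three-way split on the relative order of $\tilde u,\tilde v,\tilde t$. You instead identify $\overline{q|_u}$ uniformly as $(su_0)\ot\sigma_u$ with $\sigma_u$ the lex-maximal pure shuffle of $(u_1,\dots,u_N)$ with $(t_1,\dots,t_L)$, and compare $\sigma_u$ with $\sigma_v$ via the swapping trick (apply $\sigma_v$'s shuffle pattern to the $u$-sequence to produce an intermediate $\tau$ with $\sigma_v<\tau\le\sigma_u$). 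This is a cleaner and more uniform argument that avoids the somewhat delicate bookkeeping in the paper's Subcase~1.4; the trade-off is that it uses the explicit combinatorics of the shuffle product, whereas the paper's case analysis stays closer to the recursive Rota-Baxter identity. Both routes implicitly use that the lex-maximal shuffle term survives with nonzero coefficient.
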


\begin{proof}
Let $u,v\in \calb(\Delta_n X)$ with $u>_nv$ and $q\in
\calb^\star(\Delta_n X)$. By Lemma \mref{threecases} we have the
following three cases to consider.
\smallskip

\noindent
{\bf Case 1.} Consider $q = s\star t$ where $s\in \cm(\Delta_n X)$ and $t\in \calb(\Delta_n X)$. Note that $\calb(\Delta_n X) =
\cm(\Delta_n X)\sqcup \cm(\Delta X)P(\calb(\Delta_n X))$. We consider the following four subcases depending on $t$ or $u$ in $\cm(\Delta_n X)$ or $\cm(\Delta X)P(\calb(\Delta_n X))$.
\smallskip

\noindent
{\bf Subcase 1.1.} Let $t, u\in \cm(\Delta_n X)$. Since $u>_n v$, we have
that $v\in \cm(\Delta_n X)$ and so by Lemma \mref{lemma:mul diff},
$\lbar{q|_{u}} = sut >_n svt = \lbar{q|_{v}}$.
\smallskip

\noindent
{\bf Subcase 1.2.} Let $t\in \cm(\Delta_n X)P(\calb(\Delta_n X))$ and
$u\in \cm(\Delta_n X)$. Let $t =  t_0 \otimes \cdots \otimes t_m$ with
$m\geq 1$. Since $u >_n v$, we have $v\in \cm(\Delta_n X)$. By Lemma
\mref{lemma:mul diff} and Eq.~(\mref{eq7}), we have
$$(m+1, sut_0, t_1, \cdots, t_m) > (m+1, svt_0, t_1, \cdots, t_m) \quad \text{ lexicographically}.$$
So $\lbar{q|_{u}} = (sut_0) \otimes t_1\ot \cdots \otimes t_m >_n (svt_0)
\otimes t_1\ot \cdots \otimes t_m = \lbar{q|_{v}}$.
\smallskip

\noindent
{\bf Subcase 1.3.} Let $t\in \cm(\Delta_n X)$ and $u \in \cm(\Delta_n
X)P(\calb(\Delta_n X))$. Let $u =  u_0 \otimes \cdots \otimes u_k$
with $k\geq 1$. If $v \in \cm(\Delta_n X)$, it is obvious that
$$ \lbar{q|_{u}} = (stu_0) \otimes \cdots
\otimes u_k >_n \lbar{q|_{v}} = svt.$$ If $v\in \cm(\Delta_n
X)P(\calb(\Delta_n X))$, let $v = v_0 \otimes \cdots \otimes v_m$
with $m\geq 1$. Then $\lbar{q|_{v}} = (stv_0) \otimes \cdots \otimes
v_m$. Since $u>_n v$, by Eq.~(\ref{eq7}), we have that $$(k+1,
u_0,\cdots, u_k) > (m+1, v_0,\cdots, v_m)\quad \text{
lexicographically}.$$ By Lemma \ref{mul diff}, it follows that
$$(k+1, stu_0,u_1,\cdots, u_k)> (m+1, stv_0,
v_1,\cdots,v_m) \quad \text{ lexicographically},$$ that is,
$\lbar{q|_{u}} >_n \lbar{q|_{v}}$.
\smallskip

\noindent
{\bf Subcase 1.4.} Let $t,u\in \cm(\Delta_n X)P(\calb(\Delta_n X))$. Let
$t =t_1\otimes \tilde{t} = t_0P(\tilde{t})$ and $u = u_0\otimes
\tilde{u} =u_0P(\tilde{u})$, where $t_0,u_0\in \cm(\Delta_n X),
\tilde{t},\tilde{u} \in \calb(\Delta_n X)$. If $v\in \cm(\Delta_n X)$,
then $$\lbar{q|_{v}} = (svt_0) P(\tilde{t})\quad \text{ and }\quad
\lbar{q|_{u}} = \lbar{st_0u_0 P(\tilde{t})P(\tilde{u})} = st_0u_0
\lbar{P(\tilde{t})P(\tilde{u})}.$$ Thus $\mathrm{dep}( \lbar{q|_u} )
> \mathrm{dep}( \lbar{ q|_v } )$ and so $\lbar{q|_{u}} >_n
\lbar{q|_{v}}$. If $v\in \cm(\Delta X)P(\calb(\Delta_n X))$, let $v =
v_0 \otimes \tilde{v} =v_0 P(\tilde{v})$. Then $ \lbar{q|_{v}} =
\lbar{st_0 v_0 P(\tilde{t})P(\tilde{v})} = s t_0 v_0
\lbar{P(\tilde{t})P(\tilde{v})}$. Since $u >_n v$, we have
$\mathrm{dep}(\tilde{u}) +1 = \mathrm{dep}(u) \geq \mathrm{dep}(v) =
\mathrm{dep}(\tilde{v}) +1$ by Eq.~(\ref{eq7}) and so
$\mathrm{dep}(\tilde{u})\geq \mathrm{dep}(\tilde{v})$. If
$\mathrm{dep}(\tilde{u}) > \mathrm{dep}(\tilde{v})$, then
$\mathrm{dep}(\lbar{q|_{u}}) > \mathrm{dep}(\lbar{q|_{v}})$ and so
$\lbar{q|_{u}} >_n \lbar{q|_{v}}$. Suppose $\mathrm{dep}(\tilde{u}) =
\mathrm{dep}(\tilde{v})$. Then $ \mathrm{dep}(\lbar{q|_{u}}) =
\mathrm{dep}(\lbar{q|_{v}})$. If $u_0 >_n v_0$, then $st_0u_0
>_n st_0v_0$ by Lemma \ref{mul diff} and so $\lbar{q|_{u}} >_n
\lbar{q|_{v}}$ by Eq.~(\ref{eq7}). We are left to consider the case
$\mathrm{dep}(\tilde{u}) = \mathrm{dep}(\tilde{v})$ and $u_0 = v_0$.
In this case, since $u >_n v$, we have that $\tilde{u} >_n \tilde{v}$.
If $ \tilde{u} >_n \tilde{v} \geq \tilde{t}$, then
\begin{align*}
&\lbar{q|_{u}} =(st_0 u_0) \lbar{P(\tilde{t})P(\tilde{u})} = (st_0
u_0)P(\tilde{u}P(\tilde{t})) = (st_0 u_0) \otimes\tilde{u}
\otimes \tilde{t} \\
>_n &(st_0 v_0) \otimes\tilde{v} \otimes \tilde{t} = (st_0v_0)
P(\tilde{v}P(\tilde{t})) = (st_0 v_0)\lbar{P(\tilde{t})P(\tilde{v})}
= \lbar{q|_{v}}.
\end{align*}
If $ \tilde{t} \geq \tilde{u}>_n \tilde{v}$, then
\begin{align*}
&\lbar{q|_{u}} = (st_0u_0) \lbar{P(\tilde{t})P(\tilde{u})} =
(st_0u_0) P(\tilde{t}P(\tilde{u})) = (st_0u_0) \otimes\tilde{t}
\otimes \tilde{u} \\
>_n &(st_0v_0 )\otimes\tilde{t} \otimes \tilde{v} = (st_0u_0)
P(\tilde{t}P(\tilde{v})) = (st_0u_0)\lbar{P(\tilde{t})P(\tilde{v})}=
\lbar{q|_{v}}.
\end{align*}
If $\tilde{u}>_n \tilde{t} >_n \tilde{v}$, then
\begin{align*}
&\lbar{q|_{u}} = (st_0u_0) \lbar{P(\tilde{t})P(\tilde{u})} =
(st_0u_0) P(\tilde{u}P(\tilde{t})) = (st_0u_0) \otimes\tilde{u}
\otimes \tilde{t} \\
>_n &(st_0v_0 )\otimes\tilde{t} \otimes \tilde{v} = (st_0u_0)
P(\tilde{t}P(\tilde{v})) = (st_0
u_0)\lbar{P(\tilde{t})P(\tilde{v})}= \lbar{q|_{v}}.
\end{align*}

\noindent
{\bf Case 2.} Consider $q = sP(p)$ for some $s\in \cm(\Delta_n X)$ and
$p \in \calb^\star(\Delta_n X)$. This case can be verified by
induction on $\mathrm{dep}(q)$ and the fact that, for $u,v \in \calb(\Delta_n X)$, $u >_n v$ implies $P(u) >_n P(v).$
\smallskip

\noindent
{\bf Case 3.} Consider $q\in \calb_I^\star(\Delta_n X)$. Then $q =
p|_{d^{\ell}(\star)}$ for some $p\in \calb^\star(\Delta_n X)$ and
$\ell\in \mathbb{Z}_{\geq 1}$. Take such $\ell$
maximal so that $p\in \calb_{II}^\star(\Delta_n X)$. We need to show that if $u>_nv$ and $q|_u$ is
normal, then $\lbar{q|_{u}} >_n \lbar{q|_{v}}$. But if $q|_u$ is normal then $d^{\ell}(\star)|_u$ is normal. Then by Lemma
\ref{leading diffRB}, we have $\lbar{d^\ell(u)} >_n
\lbar{d^\ell(v)}$. Then by Cases 1 and 2, we have $\lbar{q|_{u}} =
\lbar{p|_{\lbar{d^\ell(u)}}}
>_n\lbar{p|_{\lbar{d^\ell(v)}}} = \lbar{q|_{v}}$. This completes the proof. \end{proof}

We give the following consequences of Proposition~\mref{weakmonomial} to be applied in Section~\mref{sec:cd}.
\begin{lemma}
Let $q\in \calb^\star(\Delta_n X)$ and $s\in
\mathbf{k}\calb(\Delta_n X)$ be monic. If $q|_s$ is normal, then
$\lbar{q|_s} = q|_{\lbar{s}}$. \mlabel{normalequiv}
\end{lemma}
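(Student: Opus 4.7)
\medskip

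\noindent\textbf{Proof plan.} My plan is to reduce the statement directly to the weakly monomial order property established in Proposition~\ref{weakmonomial}. Write $s = \lbar{s} + \sum_{i} c_i u_i$ with $c_i \in \bfk$ and $u_i \in \calb(\Delta_n X)$ satisfying $u_i <_n \lbar{s}$ (this is possible because $s$ is monic). Linearity of the substitution $q|_{(\cdot)}$ gives
\[
q|_s \;=\; q|_{\lbar{s}} + \sum_i c_i\, q|_{u_i},
\]
so applying the reduction map yields
\[
\red(q|_s) \;=\; \red(q|_{\lbar{s}}) + \sum_i c_i\, \red(q|_{u_i}).
\]
The normality hypothesis on $q|_s$ is by definition (see Definition~\ref{normaldef}) the assertion that $q|_{\lbar{s}}$ already lies in $\calb(\Delta_n X)$, so $\red(q|_{\lbar{s}}) = q|_{\lbar{s}}$ and in particular $\overline{q|_{\lbar{s}}} = q|_{\lbar{s}}$.

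The second step is to show that each summand $\red(q|_{u_i})$ has leading DRB monomial strictly smaller than $q|_{\lbar{s}}$. For this I invoke Proposition~\ref{weakmonomial} applied to the pair $\lbar{s} >_n u_i$. The proposition requires either that $q \in \calb^\star_{II}(\Delta_n X)$, in which case no additional hypothesis is needed, or that $q \in \calb^\star_I(\Delta_n X)$ together with $q|_{\lbar{s}}$ being normal. In the first case the conclusion $\overline{q|_{\lbar{s}}} >_n \overline{q|_{u_i}}$ is immediate. In the second case the required normality of $q|_{\lbar{s}}$ is precisely the hypothesis that $q|_s$ is normal, via Definition~\ref{normaldef}. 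Either way we obtain $\overline{q|_{\lbar{s}}} >_n \overline{q|_{u_i}}$, i.e.\ $q|_{\lbar{s}} >_n \overline{\red(q|_{u_i})}$ for every $i$.

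Combining these observations, $q|_{\lbar{s}}$ appears in $\red(q|_s)$ with coefficient $1$ and strictly dominates every other DRB monomial appearing (with nonzero coefficient) in the $\red(q|_{u_i})$ terms. Therefore $\overline{q|_s} := \overline{\red(q|_s)} = q|_{\lbar{s}}$, as desired.

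The only subtle point is the bookkeeping in the second step: one must handle the two cases of the weakly monomial order (types I and II) separately and verify that the normality hypothesis on $q|_s$ transports correctly to each $q|_{u_i}$-term. But in fact we do not need normality of $q|_{u_i}$ at all—we only need the leading-term comparison $q|_{\lbar{s}} >_n \overline{\red(q|_{u_i})}$, which is exactly what the weakly monomial order provides once the normality hypothesis on $q|_{\lbar{s}}$ is in place. Hence there is no real obstacle beyond careful case analysis.
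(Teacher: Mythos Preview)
Your proposal is correct and follows essentially the same approach as the paper's own proof: write $s=\lbar{s}+\sum_i c_i u_i$, use linearity and normality to identify $q|_{\lbar{s}}\in\calb(\Delta_n X)$ as a candidate leading term, then split into the type~I and type~II cases and invoke the weakly monomial order from Proposition~\ref{weakmonomial} to bound each $\overline{q|_{u_i}}$. The paper's Case~II unpacks the normality hypothesis a bit further (observing $\lbar{s}\in\Delta_{n-\ell}X$ via Lemma~\ref{lemma:diffnormal}), but this is exactly the content hidden inside the weakly monomial order condition you invoke, so the arguments are the same.
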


\begin{proof} Let $s = \lbar{s} + \sum_i c_i s_i$ with $0\neq c_i\in \bfk$ and $s_i < \lbar{s}$.
Then $q|_s = q|_{\lbar{s}} + \sum_i c_i q|_{s_i}$. Since $q|_s$ is
normal, it follows that $q|_{\lbar{s}} \in \calb(\Delta_n X)$ and so
$\lbar{q|_{\lbar{s}}} = q|_{\lbar{s}}$. We have the following two
cases to consider.

\noindent
{\bf Case I.} $q\in \calb_{II}^{\star}(\Delta_n X)$. Then
$\lbar{q|_{s_i}} < \lbar{q|_{\lbar{s}}} = q|_{\lbar{s}}$ by
Definition \ref{defweakmonomial} and Proposition~\ref{weakmonomial}. Hence
$\lbar{q|_s} = \lbar{q|_{\lbar{s}}}=q|_{\lbar{s}}$.

\noindent
{\bf Case II.} $q\in \calb_{I}^{\star}(\Delta_n X)$. Then $q =
p|_{d^\ell(\star)}$ for some $p\in \calb^\star(\Delta_n X)$ and
$\ell\in \mathbb{Z}_{\geq 1}$. Since $q|_s=p|_{d^\ell(s)}$ is normal, we have
$\lbar{s} \in \Delta_{n-\ell} X$ by Lemma \mref{lemma:diffnormal}.
Furthermore, $s_i< \lbar{s}$ implies that $s_i \in  \Delta_{n} X$.
Thus by
Definition \ref{defweakmonomial} and Proposition~\ref{weakmonomial}, we have $\lbar{q|_{s_i}} < q|_{\lbar{s}}$. So $\lbar{q|_s} = q|_{\lbar{s}}$.
\end{proof}

\begin{lemma}
Let $u,v\in \calb(\Delta_n X)$ with $u > v$ and $q\in
\calb^\star(\Delta_n X)$. If $q|_u$ is normal, then either $q|_v=0$ or
$q|_v$ is also normal. \mlabel{leqnormal}
\end{lemma}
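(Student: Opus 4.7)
The plan is to classify $q\in \calb^\star(\Delta_n X)$ via Lemma~\mref{threecases} into the three possible forms (type~I, $s\star t$, and $sP(p)$) and induct on $\dep(q)$, handling each form in turn. I expect the alternative ``$q|_v=0$'' to arise only in the type~I case, where the differential axiom $d(1)=0$ and the order-$n$ relation $x^{(n+1)}=0$ are the only available mechanisms that can annihilate $d^\ell(v)$.

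For the type~I case $q = p|_{d^\ell(\star)}$ with $p\in \calb_{II}^\star(\Delta_n X)$ and $\ell \geq 1$, I would first invoke Lemma~\mref{lemma:diffnormal}: normality of $q|_u$ forces $d^\ell(\star)|_u$ to be normal, hence $u\in \Delta_{n-\ell}X$, say $u=x^{(j)}$ with $j+\ell\le n$. Since $u$ has depth $1$ and degree $1$ in $\cm(\Delta_n X)$, the well order on $\calb(\Delta_n X)$ from~\eqref{eq7} forces $v\in \{1\}\cup \Delta_n X$ with $v<u$. If $v=1$, then $d^\ell(v)=0$ by $d(1)=0$, yielding $q|_v=0$. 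If $v=y^{(j')}\in \Delta_n X$, then $d^\ell(v)=y^{(j'+\ell)}$: when $j'+\ell\le n$ this is a single generator in $\Delta_n X$, and substituting any generator of $\Delta_n X$ for $\star$ in a type~II $\star$-DRB monomial preserves DRB-monomial status (by inspection of the position of $\star$ as a bare leaf in the tensor expression of $p$), so $q|_v$ is normal; when $j'+\ell>n$ the order-$n$ relation kills $d^\ell(v)$, so $q|_v=0$.

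For $q=s\star t$, I would split on $\dep(t)$. When $t\in \cm(\Delta_n X)$, the direct expansion $q|_w=(sw_0t)\otimes w_1\otimes\cdots\otimes w_k$ is a DRB monomial for every $w=w_0\otimes\cdots\otimes w_k\in \calb(\Delta_n X)$, so $q|_v$ is automatically normal. When $\dep(t)\geq 2$, normality of $q|_u$ forces $u\in \cm(\Delta_n X)$ (otherwise a product $P(\cdot)P(\cdot)$ would appear and require a Rota--Baxter reduction); then $v<u$ with $\dep(u)=1$ forces $v\in \cm(\Delta_n X)$, and the same expansion yields $q|_v$ normal. For $q=sP(p)$ with $p\in \calb_{II}^\star(\Delta_n X)$, one has $q|_w=s\cdot P(p|_w)$ normal iff $p|_w$ is, since $P$ and left-multiplication by $s$ preserve DRB monomials; the induction hypothesis applied to $p$ (which has strictly smaller depth than $q$) gives $p|_v$ normal or $p|_v=0$, which transfers to $q|_v$.

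The principal obstacle will be the type~I case: one must carefully track how $v<u$ in the order on $\calb(\Delta_n X)$ interacts with the two vanishing mechanisms in $\bfk[\Delta_n X]$, and verify once and for all that substituting a generator of $\Delta_n X$ for $\star$ in an arbitrary type~II $\star$-DRB monomial again yields a DRB monomial. The remaining cases reduce cleanly via direct tensor expansion in $\sha(\bfk[\Delta_n X])$ and the inductive step.
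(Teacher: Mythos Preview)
Your proposal is correct and covers all cases, but it proceeds differently from the paper's argument. The paper argues by contradiction: it assumes $q|_v$ is not normal and isolates the only two possible failure modes---either $\dep(v)\ge 2$ with $q=p|_{\star P(w)}$ (so a product $P(\cdot)P(\cdot)$ is created), or $q$ is of type~I---and then shows that in the first mode $q|_u$ would already fail to be normal, while in the second mode one is pushed down to $v\in\Delta_n X$ and either reaches a contradiction or obtains $q|_v=0$. Your route is instead a direct structural induction through Lemma~\mref{threecases}, treating the forms $s\star t$, $sP(p)$, and type~I separately. This is perfectly sound; the cost is the auxiliary observation you flag (that substituting a single generator of $\Delta_n X$ into any type~II $\star$-DRB monomial yields a DRB monomial), which you must prove by its own short induction on $\dep_\star$, whereas the paper's contrapositive framing sidesteps this entirely. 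Conversely, your argument makes the role of the order~\eqref{eq7} (forcing $\dep(v)\le\dep(u)$ and, in the type~I case, $v\in\{1\}\cup\Delta_n X$) more transparent. One small point of care: your induction parameter should be $\dep_\star(q)$ (or equivalently $\dep(q)$, since $q=sP(p)$ gives $\dep(q)=\dep(p)+1$); in the type~I case $\dep(p)=\dep(q)$, so that case must indeed be handled directly as you do, not via the inductive hypothesis.
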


\begin{proof}
Suppose that $q|_v$ is not normal. Then $q|_v \notin \calb(\Delta_n
X)$. We have the following cases to consider.

\noindent
{\bf Case I.}  $\dep(v)\geq 2$, that is, $v \in \cm(\Delta_n
X)P(\calb(\Delta_n X))$, and $q = p|_{\star P(w)}$ for some $p\in
\calb^\star(\Delta_n X)$ and $w\in \calb(\Delta_n X)$.
Since $u > v$, it follows that $\mathrm{dep}(u)\geq
\mathrm{dep}(v)\geq 2$ and so $u \in \cm(\Delta_n X)P(\calb(\Delta_n
X))$. This implies that $q|_u$ can be reduced by the Rota-Baxter
relation and so $q|_u \notin \calb(\Delta_n X)$. Hence $q|_u$ is not
normal, a contradiction.

\noindent
{\bf Case II.} $q = p|_{d^\ell(\star)}$ for some $p\in
\calb^\star(\Delta_n X)$ and $\ell\geq 1$.
If $\dep(v)\geq 2$, then since $u > v$, we have $\mathrm{dep}(u)\geq
2$ and so $u\in \cm(\Delta_n X)P(\calb(\Delta_n X))$. This implies
that $q|_u$ is not normal, a contradiction. If $\dep(v)=1$, then $v
\in \cm(\Delta_n X)$. If further $\mathrm{deg}_{\Delta_n X}(v)\geq 2$,
then since $u
> v$, we have either $\mathrm{dep}(u)\geq 2$, or $\mathrm{dep}(u)=1$
and $\mathrm{deg}_{\Delta_n X}(u)\geq 2$. In either case, we have
that $q|_u$ is not normal, a contradiction. Thus we must have
$\dep(v)=1$ and $\deg_{\Delta_n X}(v)=1$. So $v=x^{(r)}, r\geq 1.$
Since $q|_v=p|_{d^{\ell}(v)}=p|_{x^{(\ell+r)}}$ is supposed to be
not normal, we have $\ell+r>n$. That is, $q|_v=p|_{d^\ell(v)}=0.$
\end{proof}

\section{Composition-Diamond lemma}
\mlabel{sec:cd}
In this section, we shall establish the composition-diamond lemma for the
order $n$ free commutative differential Rota-Baxter algebra
$\sha(\bfk[\Delta_n X])$.
\begin{defn}
\begin{enumerate}
\item Let $u,w \in \calb(\Delta_n X)$. We call $u$ a {\bf
subword} of $w$ if $w$ is in the operated ideal of $\frakC(\Delta_n
X)$ generated by $u$. In terms of $\star$-words, $u$ is a subword of
$w$ if there is a $q\in \calb^\star(\Delta_n X)$ such that $w=q|_u$.
\item
Let $u_1$ and $u_2$ be two subwords of $w$. $u_1$ and $u_2$ are
called {\bf separated} if $u_1\in \cm(\Delta_n X)$, $u_2\in
\calb(\Delta_n X)$ and there is a $q\in \calb^{\star_1,
\star_2}(\Delta_n X)$ such that $w=q|_{u_1,u_2}$.
\item
For any $u\in \calb(\Delta_n X)$, $u$ can be expressed as $u =u_{1}
\cdots u_{k}$, where $u_1, \cdots, u_{k-1} \in \Delta_n X$ and
$u_{k} \in \Delta_n X\cup P(\calb(\Delta_n X))$. The integer $k$ is called
the {\bf breath} of $u$ and is denoted by $\mathrm{bre}(u)$.
\item
Let $f,g\in \calb(\Delta_n X)$. A pair $(u,v)$ with $u\in
\calb(\Delta_n X)$ and $v\in \cm(\Delta_n X)$ is called an {\bf
intersection pair} for $(f,g)$ if the differential
Rota-Baxter monomial $w:= fu$ equals $vg$ and satisfies
$\mathrm{bre}(w)<\mathrm{bre}(f) + \mathrm{bre}(g)$. Then we call
$f$ and $g$ to be {\bf overlapping}. Note that if $f$ and $g$ are
overlapping, then $f\in \cm(\Delta_n X)$.
\end{enumerate}
 \mlabel{lemma:overlape}
\end{defn}

There are four kinds of compositions.

\begin{defn}
{\rm
Let $\leq$ be a weakly monomial order on $\calb(\Delta_n X)$ and
$f,g\in \mathbf{k}\calb(\Delta_n X)$ monic with respect to $\leq$.

\begin{enumerate}
\item If $\overline{f}\in C( \Delta_n X) P(\calb(\Delta_n X))$, then define a {\bf composition of
(right) multiplication} to be $fu$ where $u\in C(\Delta_n X) P(\calb(\Delta_n X))$.
\item If $\overline{f} \notin \Delta_n X$,
then define a {\bf composition of derivation} to be $d^\ell(f)$, where
$\ell\in \mathbb{Z}_{\geq 1}$.
\item If there is an intersection pair $(u,v)$ for $(\overline{f},
\overline{g})$, then we define
$$(f,g)_w :=(f,g)^{u,v}_{w} := fu-vg$$ and call it an {\bf intersection
composition} of $f$ and $g$.
\item If there exists a $q\in \calb^\star(\Delta_n X)$ such that
$w:=\overline{f}=q|_{\overline{g}}$, then we define $(f,g)_w
:=(f,g)^{q}_{w} := f-q|_g$ and call it an {\bf including
composition} of $f$ and $g$ with respect to $q$. Note that if this
is the case, then $q|_g$ is normal.
\end{enumerate}
}
\end{defn}
In the last two cases, $(f,g)_w$ is called the {\bf ambiguity} of the composition.
\begin{defn}
{\rm Let $\leq$ be a weakly monomial order on $\calb(\Delta_n X)$,
$S\subseteq \mathbf{k}\calb(\Delta_n X)$ be a set of monic
differential Rota-Baxter polynomials and $w\in
\calb(\Delta_n X)$.

\begin{enumerate}
\item An element $h\in \bfk \frakB(\Delta_n X)$ is called {\bf trivial \text{ mod } $[S]$} and denote this by
    $$h\equiv 0 \text{ mod } [S]$$
if $h=\sum_i c_i q_i|_{s_i}, $
where $c_i\in \bfk$, $q_i\in \calb^\star(\Delta_n X)$, $s_i\in S$, $q_i|_{s_i}$ is normal
and $q_i|_{\overline{s_i}} \leq \overline{h}$.
This applies in particular to a composition of multiplication $fu$ and a composition of derivation $d^\ell(f)$ where $f\in \bfk\frakB(\Delta_n X), u\in P(\frakB(\Delta_nX))$ and $\ell\geq 1$.
We use $\text{ mod } [S]$ to distinguish from the usual notion of $u\equiv 0 \text{ mod } (S)$ when $u$ is in the ideal generated by $S$.
\item For $u,v\in \mathbf{k}\calb(\Delta_n X)$, we call $u$ and $v$
{\bf congruent modulo $(S,w)$} and denote this by $$u \equiv v \text
{ mod } (S,w)$$ if $u-v = \sum_{i}c_iq_i|_{s_i}$, where $c_i\in
\mathbf{k}$, $q_i\in \calb^\star(\Delta_n X)$, $s_i\in S$,
$q_i|_{s_i}$ is normal and $\overline{q_i|_{s_i}} < w$.
\item For $f,g\in \mathbf{k}\calb(\Delta_n X)$ and suitable $u,v$ or $q$
that give an intersection composition $(f,g)^{u,v}_{w}$ or an including composition $(f,g)^{q}_{w}$, the composition is called
{\bf trivial modulo $(S,w)$} if $$(f,g)^{u,v}_{w} \text{ or }
(f,g)^{q}_{w}\equiv 0 \text{ mod } (S,w).$$

\item The set $S\subseteq \mathbf{k}\calb(\Delta_n X)$ is a {\bf Gr\"{o}bner-Shirshov basis} if all
compositions of multiplication and derivation are trivial mod $[S]$, and, for $f, g\in S$, all
intersection compositions $(f,g)^{u,v}_{w}$ and all including
compositions $(f,g)^{q}_{w}$ are trivial modulo $(S,w)$.
\end{enumerate}
}
\end{defn}

We give some preparational lemmas before establishing the Composition-Diamond Lemma.

\begin{lemma} \label{separated}
Let $\geq$ be the weakly monomial order on $\calb(\Delta_n X)$ defined in Eq.~(\ref{eq7}), $s_1,s_2\in \mathbf{k}\calb(\Delta_n
X)$, $q_1,q_2 \in \calb^\star(\Delta_n X)$ and $w\in \calb(\Delta_n
X)$ such that $w= q_1|_{\overline{s_1}} = q_2|_{\overline{s_2}}$,
where each $q_i|_{s_i}$ is normal, $i=1,2$. If $\overline{s_1}$ and
$\overline{s_2}$ are separated in $w$, then $q_1|_{s_1} \equiv
q_2|_{s_2}$ mod $(S,w)$. \mlabel{lemma:separated}
\end{lemma}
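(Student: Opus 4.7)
The plan is to apply the standard telescoping identity for separated substitutions, combined with the preparational lemmas on normality (Lemmas~\mref{leqnormal} and~\mref{normalequiv}) and the weakly monomial order (Proposition~\mref{weakmonomial}).

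Since $\overline{s_1}$ and $\overline{s_2}$ are separated in $w$, by definition there exists a $(\star_1,\star_2)$-DRB monomial $q$ with $w=q|_{\overline{s_1},\overline{s_2}}$, and by Eqs.~(\mref{eq12})--(\mref{eq13}) we may identify $q_1|_{s_1}=q|_{s_1,\overline{s_2}}$ and $q_2|_{s_2}=q|_{\overline{s_1},s_2}$. Setting $r_i := s_i-\overline{s_i}$, so that every monomial of $r_i$ is strictly less than $\overline{s_i}$, bilinearity in the two slots yields the telescoping
$$ q_1|_{s_1}-q_2|_{s_2} \;=\; q|_{r_1,s_2}\;-\;q|_{s_1,r_2}. $$
Each side is a $\bfk$-linear combination of expressions $q|_{u,s_2}$ with $u<\overline{s_1}$ and $q|_{s_1,v}$ with $v<\overline{s_2}$, so it suffices to rewrite each such summand in the shape $p|_{s_i}$ with $s_i\in S$, show it is normal (or zero), and bound its leading term by $w$.

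Fix a typical summand $q|_{u,s_2}$ with $u<\overline{s_1}$ (the $q|_{s_1,v}$ case is symmetric). First substitute $u$ into the $\star_1$-slot of $q$ and reduce the Rota-Baxter and differential relations if necessary, to obtain a $\star_2$-DRB polynomial $p_u$ with $q|_{u,s_2}=p_u|_{s_2}$. Since $q_2|_{s_2}=q|_{\overline{s_1},s_2}$ is normal by hypothesis and $u<\overline{s_1}$, Lemma~\mref{leqnormal} applied to the $\star_1$-slot forces $p_u|_{s_2}$ to be either zero or normal; the zero terms may be dropped. For the normal summands, Lemma~\mref{normalequiv} together with the weakly monomial property of Proposition~\mref{weakmonomial} gives
$$ \overline{p_u|_{s_2}} \;=\; p_u|_{\overline{s_2}} \;=\; \overline{q|_{u,\overline{s_2}}} \;<\; \overline{q|_{\overline{s_1},\overline{s_2}}} \;=\; w. $$
Assembling these contributions over all summands of both $q|_{r_1,s_2}$ and $q|_{s_1,r_2}$ yields $q_1|_{s_1}\equiv q_2|_{s_2}$ mod $(S,w)$.

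The principal obstacle is making the slot-wise monotonicity argument watertight. Proposition~\mref{weakmonomial} guarantees $u>v\Rightarrow\overline{q'|_u}>\overline{q'|_v}$ only when the enclosing $\star$-monomial $q'$ is of type~II, or of type~I together with normality of $q'|_u$. Hence, after substituting into the $\star_1$-slot, one must case-split on whether the remaining $\star_2$-slot in $p_u$ sits inside a derivation and, in the type~I case, verify that $p_u|_{s_2}$ is still normal by invoking Lemma~\mref{leqnormal} one more time. Careful bookkeeping of the structure of $q$ in a neighborhood of each $\star_i$-slot, together with the observation that reductions of Rota-Baxter and differential occurrences preserve separation, is the delicate ingredient that makes the telescoping go through.
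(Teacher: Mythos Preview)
Your approach is the same as the paper's: both extract the $(\star_1,\star_2)$-DRB monomial $q$ from the separated hypothesis, use the bilinear telescoping
\[
q_1|_{s_1}-q_2|_{s_2}=q|_{s_1-\overline{s_1},\,s_2}-q|_{s_1,\,s_2-\overline{s_2}},
\]
and bound each residual term below $w$ via the weakly monomial order and normality. The paper's execution is more direct than yours: it simply observes that $(q^{\star_1}|_{s_1})|_{\overline{s_2}}=q_1|_{s_1}$ is normal and $v_j<\overline{s_2}$, then invokes Proposition~\mref{weakmonomial} once to get $\overline{q|_{s_1,v_j}}<w$, with no intermediate reduction step and no type~I/II case-split.

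The obstacle you flag in your final paragraph is largely illusory. The definition of ``separated'' forces $\overline{s_1}\in\cm(\Delta_n X)$, and since depth is compared first in the order $\leq_n$, every lower monomial $u<\overline{s_1}$ also lies in $\cm(\Delta_n X)$; substituting such $u$ into the $\star_1$-slot therefore never triggers a Rota-Baxter reduction, so your $p_u=q^{\star_1}|_u$ is already a $\star_2$-DRB monomial (or zero), not a polynomial. This also repairs a small inconsistency in your write-up: you call $p_u$ a $\star_2$-DRB \emph{polynomial} but then speak of $p_u|_{s_2}$ being ``normal,'' which is only defined when the enclosing object is a single $\star$-monomial.
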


\begin{proof} Let $q\in \calb^{\star_1,\star_2}(\Delta_n X)$ be the
$(\star_1,\star_2)$-DRB monomial
obtained by replacing this occurrence of $\overline{s_1}$ in $w$ by
$\star_1$ and this occurrence of $\overline{s_2}$ in $w$ by
$\star_2$. Then we have
$$q^{\star_1}|_{\overline{s_1}} = q_2, q^{\star_2}|_{\overline{s_2}}
= q_1 \text{ and } q|_{\overline{s_1}, \overline{s_2}} =
q_1|_{\overline{s_1}} = q_2|_{\overline{s_2}},$$ where in the first
two equalities, we have identified $\calb^{\star_2}(\Delta_n X)$ and
$\calb^{\star_1}(\Delta_n X)$ with $\calb^{\star}(\Delta_n X)$. Let
$s_1-\overline{s_1} = \sum_{i} c_iu_i$ and $s_2-\overline{s_2} =
\sum_{j} d_jv_j$ with $c_i,d_j\in \mathbf{k}$ and $u_i,v_j\in
\calb(\Delta_n X)$. Then by the linearity of $s_1$ and $s_2$ in
$q|_{s_1,s_2}$, we have
\begin{align*}
q_1|_{s_1} - q_2|_{s_2} &= (q^{\star_2}|_{\overline{s_2}})|_{s_1} -
(q^{\star_1}|_{\overline{s_1}})|_{s_2}\\
&= q|_{s_1,\overline{s_2}} - q|_{\overline{s_1},s_2}\\
&=q|_{s_1,\overline{s_2}} - q|_{s_1,s_2} + q|_{s_1,s_2} -
q|_{\overline{s_1},s_2}\\
&= -q|_{s_1, s_2-\overline{s_2}} + q|_{s_1-\overline{s_1},s_2}\\
&= -(q^{\star_2}|_{s_2-\overline{s_2}})|_{s_1} +
(q^{\star_1}|_{s_1-\overline{s_1}})|_{s_2}\\
&= -\sum_{j} d_j(q^{\star_2} | _{v_j})|_{s_1} + \sum_{i}
c_i(q^{\star_1} | _{u_i})|_{s_2}\\
&= -\sum_{j} d_j q|_{s_1,v_j} + \sum_{i} c_i q|_{u_i, s_2}.
\end{align*}
Since $(q^{\star_1}|_{s_1})|_{\lbar{s_2}} = q|_{s_1, \lbar{s_2}} =
(q^{\star_2}|_{\lbar{s_2}})|_{s_1} = q_1|_{s_1}$ is normal and $v_j <
\lbar{s_2}$, by Definition~\mref{defweakmonomial} and Proposition~\mref{weakmonomial}, we have
$$\lbar{q|_{s_1, v_j}} = \lbar{(q^{\star_1}|_{s_1})|_{v_j}} < \lbar{(q^{\star_1}|_{s_1})|_{\lbar{s_2}}}
 = \lbar{q_1|_{s_1}} = q_1|_{\lbar{s_1}} =w.$$
Similarly, since $(q^{\star_2}|_{s_2})|_{\lbar{s_1}} =
q|_{\lbar{s_1}, s_2}=(q^{\star_1}|_{\lbar{s_1}})|_{s_2} =
q_2|_{s_2}$ is normal and $u_i< \lbar{s_1}$, we have
$$\lbar{q|_{u_i, s_2}} = \lbar{(q^{\star_2}|_{s_2})|_{u_i}} < \lbar{(q^{\star_2}|_{s_2})|_{\lbar{s_1}}}
 = \lbar{q_2|_{s_2}} = q_2|_{\lbar{s_2}} =w.$$
Hence $q_1|_{s_1} \equiv q_2|_{s_2}$ mod $(S,w)$.
\end{proof}

For $q\in \calb^{\star}(\Delta_n X)$, let $\mathrm{dep}_{\star}(q)$ be the depth of the symbol $\star$ in $q$. For example, $\mathrm{dep}_{\star}(q) = 1$ if $q=P(\star)$ and
 $\mathrm{dep}_{\star}(q) = 2$ if $q=P(xP(\star))$.

\begin{lemma} \label{normal expression}
Let $\leq_n$ be the weakly monomial order on $\calb(\Delta_n X)$ defined in Eq.~(\ref{eq7}) and let $S\subseteq \mathbf{k}\calb(\Delta_n
X)$. If each composition of multiplication and derivation of
$S$ is trivial mod $[S]$, then, for $s\in S$ and $q\in
\calb^\star(\Delta_n X)$, $q|_s$ is trivial mod $[S]$:
$$q|_s = \sum_i c_i q_i|_{s_i},$$
where, for each $i$,  $0\neq c_i\in \bfk$, $s_i\in S$, $q_i|_{s_i}$
is normal and $q_i|_{\overline{s_i}} \leq \overline{q|_s}$.
\end{lemma}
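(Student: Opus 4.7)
The plan is to prove the lemma by transfinite induction on $w := \overline{q|_s}$ under the well-order $\leq_n$ on $\calb(\Delta_n X)$, reinforced by a secondary induction on the structural complexity of $q$ (for instance the depth $\dep(q)$ or the breath $\mathrm{bre}(q)$). If $q|_s$ is already normal, then Lemma \mref{normalequiv} gives $\overline{q|_s} = q|_{\overline{s}}$, and the singleton decomposition $q|_s = 1 \cdot q|_s$ (with $q_1 = q$, $s_1 = s$, $c_1 = 1$) immediately satisfies the conclusion.

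If $q|_s$ is not normal, then I would split on the form of $q$ using Lemma \mref{threecases}. In Case~1, $q = p|_{d^\ell(\star)}$ with $p \in \calb_{II}^\star(\Delta_n X)$ and $\ell \geq 1$; non-normality of $q|_s = p|_{d^\ell(s)}$ forces $\overline{s} \notin \Delta_{n-\ell} X$ by Lemma \mref{lemma:diffnormal}. When $\overline{s} \notin \Delta_n X$, the element $d^\ell(s)$ is a composition of derivation of $s \in S$ and hence, by hypothesis, equal to $\sum_j c_j q'_j|_{s_j}$ with each $q'_j|_{s_j}$ normal, $s_j \in S$, and $\overline{q'_j|_{s_j}} \leq \overline{d^\ell(s)}$. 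Substitution into $p$ then produces $q|_s = \sum_j c_j (p \circ q'_j)|_{s_j}$, where $p \circ q'_j$ is the composite $\star$-DRB monomial. Weak monomiality (Proposition \mref{weakmonomial}) together with Lemma \mref{normalequiv} yields $\overline{(p \circ q'_j)|_{s_j}} \leq w$; each summand is then either already normal or can be further decomposed by the inductive hypothesis.

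In Cases~2 and 3 of Lemma \mref{threecases} (namely $q = s'\star t$ or $q = s'P(p')$, both type II), the non-normality of $q|_s$ must arise from a product $P(\cdot)P(\cdot)$ created when $\overline{s}$ is substituted into the $\star$-position; this requires $\overline{s}$ to have depth $\geq 2$ and the adjacent slot in $q$ to carry a $P$-component. The offending juxtaposition is then precisely a composition of multiplication of $s \in S$, which is trivial mod $[S]$ by hypothesis. A parallel substitute-and-recurse argument handles these cases, with Case~3 also benefitting from a structural sub-induction on the strictly simpler $\star$-DRB monomial $p'$.

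The main obstacle will be making the termination argument precise: each reduction step must either strictly decrease the leading term below $w$, or strictly decrease the structural complexity of the $\star$-DRB monomial while keeping the leading term $\leq w$. The weak monomiality of $\leq_n$ combined with Lemmas \mref{normalequiv} and \mref{leqnormal} provides the required control of leading terms under substitution. A further delicate point, also within Case~1, is the degree-boundary $\overline{s} \in \Delta_n X \setminus \Delta_{n-\ell} X$, where $d^\ell(\overline{s}) = 0$ in $\sha(\bfk[\Delta_n X])$ and no formal composition of derivation is available; here I expect to reduce to the strictly lower-order tail $s - \overline{s}$ and argue by linearity, exploiting that its summands have leading term strictly below $w$.
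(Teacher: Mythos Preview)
Your approach is workable in spirit but considerably more intricate than the paper's, and the termination of your double induction is not yet secured. After invoking triviality of a composition (of derivation or multiplication) you obtain summands $(p\circ q'_j)|_{s_j}$ whose leading terms are only guaranteed to be $\leq w$, not $< w$; and the composite $\star$-DRB monomial $p\circ q'_j$ can have larger depth, breath, or $\dep_\star$ than $q$, so neither your primary nor your secondary induction variable is forced to drop. You flag this as ``the main obstacle'', and indeed it is a real one that you have not yet resolved.

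The paper avoids this entirely by dispensing with the transfinite induction on $\overline{q|_s}$. Its argument has just two layers. First, for $q\in\calb^\star_{II}(\Delta_n X)$ it proves the statement by a clean finite induction on $\dep_\star(q)$: at $\dep_\star(q)=0$ one has $q=u\star v$ with $u\in C(\Delta_n X)$, and non-normality forces $\bar s,v\in C(\Delta_n X)P(\calb(\Delta_n X))$, so $sv$ is itself a composition of multiplication; its trivial expansion $sv=\sum d_ip_i|_{t_i}$ yields $q|_s=\sum d_i(up_i)|_{t_i}$, and each $up_i|_{t_i}$ is \emph{already} normal because left-multiplication by $u\in C(\Delta_n X)$ cannot create a $P(\cdot)P(\cdot)$. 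No recursion is needed at this point. At $\dep_\star(q)=k+1$ one writes $q=uP(p)$ with $\dep_\star(p)=k$, applies the induction hypothesis to $p|_s$, and observes that wrapping a normal term in $uP(\,\cdot\,)$ keeps it normal. Second, for $q\in\calb^\star_I(\Delta_n X)$ one writes $q=p|_{d^\ell(\star)}$ with $p\in\calb^\star_{II}$ and $\ell$ maximal, expands $d^\ell(s)$ via the composition-of-derivation hypothesis, and then appeals to the already-proved type~II case for $p$. The crucial point is that after each use of a composition hypothesis, the resulting terms are either directly normal or handled by the structural induction on $\dep_\star$ alone; leading-term bounds are verified separately and never drive the recursion.

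Your treatment of the boundary case $\bar s\in\Delta_n X\setminus\Delta_{n-\ell}X$ is a genuine observation---the paper's definition of composition of derivation requires $\bar f\notin\Delta_n X$, so strictly speaking that case needs a word---but it is easily handled by linearity once the main argument is in place, as you suggest.
 Overall I recommend you abandon the transfinite induction and restructure along the paper's lines: prove type~II first by induction on $\dep_\star(q)$, then reduce type~I to type~II.
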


\begin{proof}
We have the following two cases to consider.
\smallskip

\noindent
{\bf Case I.}  $q\in \calb^{\star}_{II}(\Delta_n X)$. In this case, we prove the result by induction on $\mathrm{dep}_{\star}(q)$.
If  $\mathrm{dep}_{\star}(q) = 0$, then $q = u\star v$, where $u \in \cm(\Delta_n X)$ and $v\in
\calb(\Delta_n X)$. If $s\in S$ is such that $\lbar{s}\in \cm(\Delta_n X)$  or $v\in \cm(\Delta_n
X)$, then it is obvious that $q|_s$ is normal by Definition \mref{normaldef} (c). Suppose $\lbar{s}, v\notin
\cm(\Delta_n X)$. Then $\lbar{s}, v\in \cm(\Delta_n X)P(\calb(\Delta_n X))$. Since the composition of multiplication of $S$ is trivial mod $[S]$, we have $$sv = \sum_i
d_i p_i|_{t_i},$$  where $0\neq d_i\in \bfk$, $t_i\in S$,
$p_i|_{t_i}$ is normal and $p_i|_{\overline{t_i}} \leq
\overline{sv}$. Let $q_i:= u p_i \in \calb^\star(\Delta_n
X)$. Since $u\in \cm(\Delta_n X)$, we have $q_i|_{t_i} =
u p_i|_{t_i}$ is normal. Moreover,
$$q|_s = usv = \sum_i d_i up_i|_{t_i} = \sum_i d_i
q_i|_{t_i},$$ where $q_i|_{\overline{t_i}} =
up_i|_{\overline{t_i}} \leq  u\overline{sv} =\lbar{
u sv } =  \overline{q|_{s}}$. Hence $q|_s$ is trivial mod $[S]$.

Suppose the claim has been proved for $q\in \cm(\Delta_nX)$ with $\dep_\star(q)=k\geq 0$ and consider $q$ with $\dep_\star(q)=k+1$. Then $q = uP(p)$, where $u \in \cm(\Delta_n X)$ and $p \in
\calb^\star_{II}(\Delta_n X)$ with $\dep_\star(p)=k$. By the induction hypothesis we
have $p |_s = \sum_i c_i p_i|_{t_i},$ where $0\neq c_i\in \bfk$,
$t_i\in S$, $p_i|_{t_i}$ is normal and $p_i|_{\overline{t_i}} \leq
\overline{p|_s}$. Let $q_i := uP(p_i) \in \calb^\star(\Delta_n X)$.
Then $q|_s = \sum_i c_iq_i|_{t_i}$, $q_i|_{t_i} = uP(p_i|_{t_i})$ is
normal and
$q_i|_{\lbar{t_i}} = uP(p_i|_{\lbar{t_i}}) \leq
uP(\lbar{p|_{s}}) = \lbar{uP(p|_{s})} = \lbar{q|_s},$ as desired. This completes the induction.

\noindent
{\bf Case II.} $q\in \calb^{\star}_{I}(\Delta_n X)$. Then $q =
p|_{d^{\ell}(\star)}$ for some $p\in \calb^\star(\Delta_n X)$ and
$\ell{\geq 1}$. Choose such an $\ell$ to be maximal so that
$p$ is in $ \calb^{\star}_{II}(\Delta_n X)$. By our hypothesis,
the composition of derivation is trivial mod $[S]$. So $d^\ell(s) =
\sum c_i p_i|_{s_i}$, where $0\neq c_i\in \bfk$, $s_i\in S$,
$p_i|_{s_i}$ is normal and $p_i|_{\overline{s_i}} \leq
\overline{d^\ell(s)}$. Since $p$ is in $ \calb^{\star}_{II}(\Delta_n
X)$, by Cases I that has been proved above, the result holds.
\end{proof}

\begin{lemma} \label{basis}
Let $\geq$ be the weakly monomial order on $\calb(\Delta_n X)$
defined in Eq.~(\ref{eq7}) and let $S\subseteq \mathbf{k}\calb(\Delta_n
X)$. If $S$ is a Gr\"{o}bner-Shirshov basis, then for each pair
$s_1,s_2\in S$ for which there exist $q_1,q_2\in
\calb^\star(\Delta_n X)$ and $w\in \calb(\Delta_n X)$ such that
$w=q_1|_{\overline{s_1}} = q_2|_{\overline{s_2}}$ with
$q_1|_{s_1}$ and $q_2|_{s_2}$ normal, we have $q_1 | _{s_1} \equiv q_2 |
_{s_2}$ mod $(S,w)$.

\mlabel{lemma:basis}
\end{lemma}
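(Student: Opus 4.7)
The plan is to adapt the classical three-case composition-diamond argument to our weakly monomial setting. Given $w = q_1|_{\overline{s_1}} = q_2|_{\overline{s_2}}$ with both $q_i|_{s_i}$ normal, I would analyze how the two occurrences of $\overline{s_1}, \overline{s_2}$ are positioned inside the tensor/bracket decomposition of $w \in \calb(\Delta_n X)$. Three geometric configurations arise: (i) the two occurrences are separated inside $w$; (ii) they overlap but neither contains the other; (iii) one occurrence contains the other as an operated subword.

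Case (i) is exactly the hypothesis of Lemma~\ref{lemma:separated}, which directly yields $q_1|_{s_1} \equiv q_2|_{s_2} \pmod{(S,w)}$. Case (ii) forces both $\overline{s_1}$ and $\overline{s_2}$ to lie in the same commutative-word factor of some tensor slot of $w$, so $\overline{s_1}, \overline{s_2} \in \cm(\Delta_n X)$ and we obtain an intersection pair $(u,v)$ with $\overline{s_1}u = v\overline{s_2} =: w'$ a subword of $w$. The Gr\"obner-Shirshov basis hypothesis gives $(s_1,s_2)^{u,v}_{w'} = s_1 u - v s_2 \equiv 0 \pmod{(S, w')}$. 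Writing $w = p|_{w'}$ for an appropriate common ambient $\star$-context $p \in \calb^\star(\Delta_n X)$, I would apply $p|_{-}$ to this congruence and then invoke Lemma~\ref{normal expression} to rewrite $p|_{s_1u - v s_2}$ as a sum $\sum c_i q_i|_{t_i}$ of normal $t_i$-DRB monomials. The strict inequality $q_i|_{\overline{t_i}} < w$ needed to conclude follows from the weakly monomial order (Proposition~\ref{weakmonomial}) applied to the strict inequality at the inner level, together with the fact that $q_i|_{s_i}$ being normal is transported through Lemma~\ref{leqnormal} and Lemma~\ref{normalequiv}. Case (iii) is handled analogously: without loss of generality there is $q \in \calb^\star(\Delta_n X)$ with $\overline{s_1} = q|_{\overline{s_2}}$, so the including composition $(s_1,s_2)^q_{\overline{s_1}} = s_1 - q|_{s_2}$ is trivial modulo $(S, \overline{s_1})$, and applying $q_1|_{-}$ and again using Lemma~\ref{normal expression} plus Proposition~\ref{weakmonomial} yields $q_1|_{s_1} \equiv q_1|_{q|_{s_2}} \pmod{(S,w)}$. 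Finally $q_1|_{q|_{s_2}}$ and $q_2|_{s_2}$ either coincide or differ only by a separated configuration already dispatched in case (i).

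The main obstacle is technical rather than conceptual: one must verify that applying arbitrary $\star$-contexts to the trivialities $s_1u - vs_2 \equiv 0$ and $s_1 - q|_{s_2} \equiv 0$ preserves both normality and the strict leading-term bound against $w$. This is delicate precisely because our order is only \emph{weakly} monomial. The case where the outer context $q_1$ or $q_2$ lies in $\calb^\star_I(\Delta_n X)$ (a derivation wrapped around the substitution site) is the tightest: here Proposition~\ref{weakmonomial} provides the monotonicity only after we check, via Lemma~\ref{lemma:diffnormal}, that the relevant $d^\ell$-substitution is normal (equivalently, the substituted leading term lies in $\Delta_{n-\ell}X$), while nonleading terms of $s_i$ are either killed by $d^{n+1}$ or remain below the leading term. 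Once these normality and boundedness checks are made, the three cases combine to give the claim.
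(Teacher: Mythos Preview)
Your three-case strategy matches the paper's proof, and the invocation of Lemma~\ref{lemma:separated} for the separated case is exactly right. Two points deserve correction.

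First, invoking Lemma~\ref{normal expression} in Cases~(ii) and~(iii) is an unnecessary detour. Once the inner composition is written as $\sum_j c_j p_j|_{t_j}$ with each $p_j|_{t_j}$ normal and $p_j|_{\overline{t_j}} < w'$, applying the ambient context $p$ yields terms $(p\circ p_j)|_{t_j}$. Since $p|_{w'}=w\in\calb(\Delta_n X)$ is itself normal, Lemma~\ref{leqnormal} gives that each $p|_{p_j|_{\overline{t_j}}}$ is either zero or already normal; in the latter case Lemma~\ref{normalequiv} together with the weakly monomial order gives the strict bound below $w$ directly. The paper proceeds this way and never appeals to Lemma~\ref{normal expression} here.

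Second, your closing claim in Case~(iii) is too optimistic and is a genuine (if small) gap. After reducing $q_1|_{s_1}$ to $q_1|_{q|_{s_2}} = (q_1\circ q)|_{s_2}$, you must compare the two contexts $q_1\circ q$ and $q_2$, both of which send $\overline{s_2}$ to $w$. These two occurrences of $\overline{s_2}$ in $w$ need not be separated: they can overlap in the sense of your Case~(ii). The paper closes Case~III by appealing to \emph{both} of the already-established Cases~I and~II, not Case~I alone. (Proper inclusion cannot recur, since $\overline{s_2}$ cannot be a proper subword of itself, so no infinite regress arises.) With this correction your argument goes through.
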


\begin{proof}  Let $s_1,s_2\in S$, $q_1,q_2\in
\calb^\star(\Delta_n X)$ and $w\in \calb(\Delta_n X)$ be such that
$w=q_1|_{\overline{s_1}} = q_2|_{\overline{s_2}}$. According to the
relative location of $s_1$ and $s_2$ in $w$, we have the following
three cases to consider.
\smallskip

\noindent
{\bf Case I.} $\overline{s_1}$ and $\overline{s_2}$ are separated in
$w$. This case is covered by Lemma \ref{separated}.
\smallskip

\noindent
{\bf Case II.} $\overline{s_1}$ and $\overline{s_2}$ are overlapping
in $w$. Then there are $u\in \calb(\Delta_n X)$ and $v\in \cm(\Delta
X)$ such that $w_1:=\overline{s_1}u=v\overline{s_2}$ is a subword in
$w$ with $\mathrm{bre}(w_1)< \mathrm{bre}(\overline{s_1})+
\mathrm{bre}(\overline{s_2})$. Since $S$ is a Gr\"{o}bner-Shirshov
basis, we have $$s_1u-vs_2 = \sum_j c_j p_j |_{t_j},$$ where $p_j |_
{t_j}$ is normal and $\overline{p_j |_{t_j}} = p_j
|_{\overline{t_j}} < \overline{s_1}u = v\overline{s_2} = w_1$.

Let $q\in \calb^{\star_1,\star_2}(\Delta_n X)$ be obtained from
$q_1$ by replacing $\star$ by $\star_1$, and the $u$ on the right of
$\star$ by $\star_2$. Let $p\in \calb^\star(\Delta_n X)$ be obtained
from $q$ by replacing $\star_1\star_2$ by $\star$. Then we have
$$q^{\star_2} |_u = q_1, q^{\star_1} |_v = q_2 \text{ and } p|_{\overline{s_1}u} =
q|_{\overline{s_1}, u} = q_1 |_{\overline{s_1}} =w,$$ where in the
first two equalities, we have identified $\calb^{\star_2}(\Delta_n
X)$ and $\calb^{\star_1}(\Delta_n X)$ with $\calb^{\star}(\Delta
X)$. Thus we have
$$q_1 |_{s_1} - q_2 |_{s_2} = (q^{\star_2}|_u)|_{s_1}
-(q^{\star_1}|_v)|_{s_2} = p|_{s_1u-vs_2} = \sum_j c_j p|_{p_j|_{t_j}}.
$$
Since $\overline{p_j |_{t_j}} < w_1$ and $p|_{w_1} =w \in
\calb(\Delta_n X)$ is normal, we have $p|_{\overline{p_j
|_{t_j}}}$ is either zero or normal by Lemma
\ref{leqnormal}. If $p|_{\overline{p_j |_{t_j}}}=0$, there is
nothing to prove. If $p|_{\overline{p_j |_{t_j}}}$ is normal, then
by Lemma \ref{normalequiv}, we have
$\overline{p|_{p_j|_{t_j}}} = p|_{\overline{p_j |_{t_j}}} <
p|_{w_1} = w.$ Hence
$q_1 |
_{s_1} \equiv q_2 | _{s_2} \text{ mod } (S, w).$
\smallskip

\noindent
{\bf Case III.} One of $\overline{s_1}$ or $\overline{s_2}$ is a
subword of the other. Without loss of generality, we may suppose
that $\overline{s_1}=q|_{\overline{s_2}}$ for some $q\in
\calb^\star(\Delta_n X)$. Since $\overline{s_1}=q|_{\overline{s_2}}
\in \calb(\Delta_n X)$, it follows that $q|_{s_2}$ is normal  by
Definition \ref{normaldef} and $\overline{q|_{s_2}} =
q|_{\overline{s_2}}$. For the  inclusion composition
$(s_1,s_2)^q_{\overline{s_1}}$, since $S$ is a Gr\"{o}bner-Shirshov
basis, we have
$(s_1,s_2)^q_{\overline{s_1}} = s_1-q|_{s_2} = \sum_j c_j p_j |_{t_j},$
where $c_j\in \mathbf{k}$, $p_j\in \calb^\star(\Delta_n X)$, $t_j\in
S$ and $p_j|_{t_j}$ is normal with $\overline{p_j|_{t_j}} <
\overline{s_1}$. Let $p\in \calb^\star(\Delta_n X)$ be obtained from
$q_1$ by replacing $\star$ with $q$. Then
$w= q_2|_{\overline{s_2}} = q_1|_{\overline{s_1}} = q_1|_{q|_{\overline{s_2}}} = p|_{\overline{s_2}}.$
Since $S$ is a Gr\"{o}bner-Shirshov basis, by Cases I and II, we
have
$$p|_{\overline{s_2}} - q_2|_{\overline{s_2}} = \sum_i d_i r_i|_{v_i},$$
where $d_i\in \mathbf{k}$, $r_i\in \calb^\star(\Delta_n X)$, $v_i\in
S$ and $r_i|_{v_i}$ is normal with $\overline{r_i|_{v_i}} =
r_i|_{\overline{v_i}} < q_2|_{\overline{s_2}}=w$. So
\begin{align*}
q_2|_{s_2} - q_1|_{s_1} &= (p|_{s_2} - \sum_i d_i r_i|_{v_i}) -
q_1|_{s_1}\\
&=p|_{s_2} -q_1|_{s_1} -\sum_i d_i r_i|_{v_i}\\
&= q_1|_{q|_{s_2}} -q_1|_{s_1} -\sum_i d_i r_i|_{v_i}\\
&= -q_1|_{s_1-q|_{s_2}} -\sum_i d_i r_i|_{v_i}\\
&= -\sum_j c_j q_1|_{p_j|_{t_j}}-\sum_i d_i r_i|_{v_i}.
\end{align*}
Since $\overline{p_j |_{t_j}} < \lbar{s_1}$ and $q_1|_{\lbar{s_1}}
=w \in \calb(\Delta_n X)$ is normal by our hypothesis,
we have $q_1|_{\overline{p_j |_{t_j}}}=0$ or $q_1|_{\overline{p_j
|_{t_j}}}$ is normal by Lemma \ref{leqnormal}. If
$q_1|_{\overline{p_j |_{t_j}}}=0$, there is noting to prove. If
$q_1|_{\overline{p_j |_{t_j}}}$ is normal, then by Lemma~\ref{normalequiv},
$\overline{q_1|_{p_j|_{t_j}}}
=q_1|_{\overline{p_j|_{t_j}}} < q_1|_{\overline{s_1}} =w.$ Hence
$q_2|_{s_2} - q_1|_{s_1} \equiv 0 \text{ mod } (S, w).$
\end{proof}

\begin{lemma}\label{expression}\mlabel{lemma:expression}
Let $\leq_n$ be the weakly monomial order on $\calb(\Delta_n X)$
defined in Eq.~(\ref{eq7}), $S \subseteq \mathbf{k}\calb(\Delta_n
X)$ and $\mathrm{Irr(S)}:= \calb(\Delta_n X) \setminus \{
q|_{\overline{s}} \mid q\in \calb^\star(\Delta_n X), s\in S ,
q|_{s}\text{ is normal } \}$. Then for any $f\in
\mathbf{k}\calb(\Delta_n X)$, $f$ has an expression
$$f = \sum_i c_i u_i +  \sum_j d_j q_j|_{s_j}, $$
where $0\neq c_i,d_j \in \bfk, u_i\in \mathrm{Irr(S)},
\overline{u_i} \leq\lbar{f}$, $s_j\in S$, $q_j|_{s_j}$ is normal and
$q_j|_{\lbar{s_j}} \leq \lbar{f}$.
\end{lemma}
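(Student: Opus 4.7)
The plan is to establish this reduction lemma by transfinite (well-founded) induction on the leading term $\overline{f}$, which is legitimate because Proposition~\ref{weakmonomial} guarantees that $\leq_n$ is a well order on $\calb(\Delta_n X)$. We may assume without loss of generality that every element of $S$ is monic (rescaling does not affect $\mathrm{Irr}(S)$). The base case $f=0$ is trivial, corresponding to the empty sum. For the inductive step, write $f = c\overline{f} + r$ with $0\neq c\in\bfk$ the leading coefficient and $\overline{r}<_n\overline{f}$, and split into two cases according to whether $\overline{f}$ lies in $\mathrm{Irr}(S)$.

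In the first case, $\overline{f}\in\mathrm{Irr}(S)$, we simply apply the induction hypothesis to $r$ to obtain an expression
\[
r=\sum_i c_i u_i +\sum_j d_j q_j|_{s_j},
\]
with $u_i\in\mathrm{Irr}(S)$, $\overline{u_i}\leq_n\overline{r}<_n\overline{f}$, $q_j|_{s_j}$ normal, and $q_j|_{\overline{s_j}}\leq_n\overline{r}<_n\overline{f}$. Then $f=c\overline{f}+r$ is already of the required form after absorbing $c\overline{f}$ into the first sum.

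In the second case, $\overline{f}\notin\mathrm{Irr}(S)$, so by definition there exist $q\in\calb^\star(\Delta_n X)$ and $s\in S$ such that $q|_s$ is normal and $q|_{\overline{s}}=\overline{f}$. Since $s$ is monic and $q|_s$ is normal, Lemma~\ref{normalequiv} yields $\overline{q|_s}=q|_{\overline{s}}=\overline{f}$. Setting $f':=f-c\,q|_s$, the leading terms cancel and hence $\overline{f'}<_n\overline{f}$. The induction hypothesis applied to $f'$ furnishes an expansion whose leading terms are all $\leq_n\overline{f'}<_n\overline{f}$; adding the single term $c\,q|_s$ back in then produces the desired expression for $f$, since $q|_{\overline{s}}=\overline{f}\leq_n\overline{f}$ satisfies the required bound.

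The argument is essentially the standard rewriting step from commutative Gröbner basis theory, and the main point that needs attention is not any one manipulation but rather the use of the correct equality of leading terms in the second case: one must know that replacing $\overline{f}$ by $c\,q|_s$ really produces a strict decrease. This hinges on the identity $\overline{q|_s}=q|_{\overline{s}}$, which is exactly the content of Lemma~\ref{normalequiv} and is the reason normality of $q|_s$ was built into the definition of $\mathrm{Irr}(S)$. Once this is in hand, well-foundedness of $\leq_n$ closes the induction.
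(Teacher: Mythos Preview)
Your proof is correct and follows essentially the same approach as the paper: the paper argues by minimal counterexample on $\overline{f}$, while you phrase it as well-founded induction, but the two-case split (whether $\overline{f}\in\mathrm{Irr}(S)$) and the reduction step are identical. Your explicit invocation of Lemma~\ref{normalequiv} to justify $\overline{q|_s}=q|_{\overline{s}}$ and your remark about the monic assumption are both appropriate clarifications that the paper leaves implicit.
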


\begin{proof}
Suppose the lemma does not hold and let $f$ be a counterexample with minimal $\lbar{f}$. Write
$f = \sum_i c_i u_i$ where $0\neq c_i\in \bfk$, $u_i\in
\calb(\Delta_n X)$ and $u_1>u_2>\cdots$. If $u_1\in
\mathrm{Irr(S)}$, then let $f_1 := f-c_1u_1$. If $u_1\notin
\mathrm{Irr}(S)$, that is, there exists $s_1\in S$ such that $u_1 =
q_1|_{s_1}$ and $q_1|_{s_1}$ is normal, then let $f_1 := f -
c_1q_1|_{s_1}$. In both cases $\lbar{f_1} < \lbar{f}$. By the
minimality of $f$, we have that $f_1$ has the desired
expression. Then $f$ also has the desired expression. This is a
contradiction.
\end{proof}

Now we are ready to derive the Composition-Diamond Lemma.

\begin{theorem}(Composition-Diamond Lemma)\label{CD lemma} Let $\geq$ be the weakly monomial order on $\calb(\Delta_n X)$
defined in Eq.~(\ref{eq7}), $S_n$ a set of monic DRB polynomials in $\bfk \calb(\Delta_n
X)$ and $\mathrm{Id(S_n)}$ the Rota-Baxter ideal of
$\mathbf{k}\calb(\Delta_n X)$ generated by $S_n$. Then the following
conditions are equivalent:
\begin{enumerate}
\item $S_n$ is a Gr\"{o}bner-Shirshov basis in
$\mathbf{k}\calb(\Delta_n X)$.
\mlabel{it:cda}
\item If $0\neq f\in \mathrm{Id(S_n)}$, then $\overline{f} =
q|_{\overline{s}}$ for some $q\in \calb^\star(\Delta_n X)$, $s\in S_n$
and $q|_s$ is normal.
\mlabel{it:cdb}
\item
$\mathrm{Irr(S_n)}:= \calb(\Delta_n X) \setminus
\{ q|_{\overline{s}} \mid q\in \calb^\star(\Delta_n X), s\in S_n ,
q|_{s}\text{ is normal} \}$ is a $\mathbf{k}$-basis of
$\mathbf{k}\calb(\Delta_n X)/\mathrm{Id(S_n)}$.
In other words, $\mathbf{k} \mathrm{Irr(S_n)} \oplus \mathrm{Id(S_n)} = \mathbf{k}
\calb(\Delta_n X)$.
\mlabel{it:cdc}
\end{enumerate} \mlabel{thm:CD lemma}
\end{theorem}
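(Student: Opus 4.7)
The plan is to establish the cyclic chain (a)$\Rightarrow$(b)$\Rightarrow$(c)$\Rightarrow$(a), following the classical Shirshov-Bokut template but adapting carefully to the weakly monomial order, where order-respecting substitution is only guaranteed for normal substitutions (cf.~Definition~\mref{defweakmonomial}). For (a)$\Rightarrow$(b), I start from a nonzero $f \in \mathrm{Id}(S_n)$. By Lemma~\mref{operator ideal}, $f$ can be written as $\sum_i c_i\, q_i|_{s_i}$. Invoking Lemma~\mref{normal expression} on each summand whose $q_i|_{s_i}$ is not yet normal, I replace this by a presentation in which every $q_i|_{s_i}$ is normal; this is where the triviality of compositions of multiplication and derivation (half of the definition of a Gr\"obner-Shirshov basis) is used. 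Among all such normal presentations of $f$, choose one that minimizes $w := \max_i q_i|_{\overline{s_i}}$ in the well order $\leq_n$, and then minimizes the number of indices attaining $w$. If only one index attains $w$, then $\overline{f} = w = q_i|_{\overline{s_i}}$ as required. Otherwise two indices $i_1, i_2$ both attain $w$, and Lemma~\mref{basis} (via its three cases of separated, overlapping, and inclusion configurations of $\overline{s_{i_1}}$ and $\overline{s_{i_2}}$ inside $w$) gives $q_{i_1}|_{s_{i_1}} \equiv q_{i_2}|_{s_{i_2}}$ modulo $(S_n, w)$. Substituting this congruence into $f$ strictly reduces the count at $w$, contradicting minimality.

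For (b)$\Rightarrow$(c), Lemma~\mref{expression} applied to any $f \in \bfk\calb(\Delta_n X)$ yields a decomposition $f = \sum c_i u_i + \sum d_j q_j|_{s_j}$ with $u_i \in \mathrm{Irr}(S_n)$, immediately showing that $\mathrm{Irr}(S_n)$ spans $\bfk\calb(\Delta_n X)/\mathrm{Id}(S_n)$. For linear independence, suppose $0 \neq h = \sum c_i u_i$ is a linear combination of elements of $\mathrm{Irr}(S_n)$ lying in $\mathrm{Id}(S_n)$; then (b) forces $\overline{h} = q|_{\overline{s}}$ for some $s \in S_n$ with $q|_s$ normal, contradicting $\overline{h} \in \mathrm{Irr}(S_n)$. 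Hence $\bfk\,\mathrm{Irr}(S_n) \oplus \mathrm{Id}(S_n) = \bfk\calb(\Delta_n X)$.

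For (c)$\Rightarrow$(a), every composition (of multiplication, derivation, intersection, or inclusion) of elements of $S_n$ lies in $\mathrm{Id}(S_n)$. Applying Lemma~\mref{expression} to such a composition $h$ and intersecting with the direct-sum decomposition in (c) forces the $\sum c_i u_i$-part to vanish, so $h = \sum d_j q_j|_{s_j}$ with each $q_j|_{s_j}$ normal and $q_j|_{\overline{s_j}} \leq \overline{h}$. For intersection and inclusion compositions the two leading terms cancel, giving $\overline{h} < w$, so $q_j|_{\overline{s_j}} < w$ as demanded. This is precisely the required ``$\equiv 0$ mod $[S_n]$'' or ``$\equiv 0$ mod $(S_n, w)$'' triviality. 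The main obstacle I anticipate is in (a)$\Rightarrow$(b): because the order is only \emph{weakly} monomial, leading-term behavior under substitution is unreliable outside the normal regime, so every rewriting step must preserve normality. This is where Lemmas~\mref{normalequiv} and~\mref{leqnormal} become indispensable, guaranteeing that the congruences furnished by Lemma~\mref{basis} involve only normal substitutions and do not accidentally raise the leading term above $w$.
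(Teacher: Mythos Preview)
Your proof plan is correct and follows essentially the same route as the paper's own proof: the same cyclic chain (a)$\Rightarrow$(b)$\Rightarrow$(c)$\Rightarrow$(a), the same use of Lemmas~\mref{operator ideal} and~\mref{normal expression} to obtain a normal presentation in (a)$\Rightarrow$(b), the same minimization of the top level $w$ and its multiplicity followed by an appeal to Lemma~\mref{basis}, and the same use of Lemma~\mref{expression} together with the direct-sum decomposition for (c)$\Rightarrow$(a). The only cosmetic difference is that for (b)$\Rightarrow$(c) the paper redoes the minimality argument from scratch rather than invoking Lemma~\mref{expression} directly as you do, but the content is identical.
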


\begin{proof} \mref{it:cda} $\Rightarrow$ \mref{it:cdb}: Let $0\neq f\in \mathrm{Id(S_n)}$.
Then by Lemmas \ref{operator ideal} and \ref{normal expression},
\begin{equation}\label{ideal}
f = \sum_{i=1}^{k} c_iq_i|_{s_i}, \text{ where } 0\neq c_i \in
\mathbf{k},  s_i\in S_n, q_i|_{s_i} \text{ is normal}, 1\leq i\leq k.
\end{equation}
Let $w_i = q_i |_{s_i}, 1\leq i\leq k$. We rearrange them in non-increasing order
by
$$w_1=w_2=\cdots=w_m>w_{m+1}\geq \cdots \geq w_k.$$
If for each $0\neq f\in \mathrm{Id(S_n)}$, there is a choice of the
above sum such that $m=1$, then $\overline{f} = q_1|_{s_1}$ and we
are done. So suppose the implication (a) $\Rightarrow({\rm b})$ does
not hold. Then there is a $0\neq f\in \mathrm{Id(S_n)}$ such that for
any expression in Eq.~(\ref{ideal}), we have that $m\geq 2$. Fix
such an $f$ and choose an expression in Eq.~(\ref{ideal}) such that
$q_1|_{\overline{s_1}}$ is minimal and then with $m\geq 2$ minimal, that is, with
the fewest $q_i|_{s_i}$ such that
$q_i|_{\overline{s_i}}=q_1|_{\overline{s_1}}$. Since $m\geq 2$, we
have $q_1|_{\overline{s_1}}=w_1=w_2=q_2|_{\overline{s_2}}$.

Since $S_n$ is a Gr\"{o}bner-Shirshov basis in $\mathbf{k}
\calb(\Delta_n X)$, by Lemma \ref{basis}, we have
$$q_2|_{\overline{s_2}} - q_1|_{\overline{s_1}} =\sum_j d_j p_j|_{r_j},$$
where $d_j\in \mathbf{k}$, $r_j\in S_n$, $p_j\in \calb^\star(\Delta
X)$ and $p_j|_{r_j}$ are normal with $p_j|_{\overline{r_j}} < w_1$.
Hence
$$
f = \sum_{i=1}^{k} c_iq_i|_{s_i} = (c_1+c_2)q_1|_{s_1} + c_3 q_3|_{s_3} + \cdots + c_m q_m|_{s_m} +
\sum_{i=m+1}^{k} c_iq_i|_{s_i} + \sum_j c_2 d_j p_j|_{r_j}.
$$
By the minimality of $m$, we must have $c_1+c_2 = c_3 =\cdots = c_m
= 0$. Then we obtain an expression of $f$ in the form of
Eq.~(\ref{ideal}) for which $q_1|_{\overline{s_1}}$ is even smaller,
a contradiction.

\mref{it:cdb} $\Rightarrow \mref{it:cdc}$: Obviously $0\in \mathbf{k} \mathrm{Irr(S_n)}
+\mathrm{Id(S_n)} \subseteq \mathbf{k} \calb(\Delta_n X)$. Suppose the
inclusion is proper. Then $\mathbf{k} \calb(\Delta_n X) \setminus
(\mathbf{k} \mathrm{Irr(S_n)} + \mathrm{Id(S_n)})$ contains only nonzero
elements. Let $f\in \mathbf{k} \calb(\Delta_n X) \setminus
(\mathbf{k} \mathrm{Irr(S_n)} + \mathrm{Id(S_n)}) $ be such that

$$\overline{f} = \mathrm{min} \{\overline{g} \mid g\in \mathbf{k}
\calb(\Delta_n X) \setminus (\mathbf{k} \mathrm{Irr(S_n)} +
\mathrm{Id(S_n)}) \}.$$

\noindent
{\bf Case I.} $\overline{f} \in \mathrm{Irr(S_n)}$. Then $f\neq
\overline{f}$ since $f\notin \mathrm{Irr(S_n)}$. By
$\overline{f-\overline{f}} < \overline{f}$ and the minimality of
$\overline{f}$, we must have $f-\overline{f} \in \mathbf{k}
\mathrm{Irr(S_n)} + \mathrm{Id(S_n)}$ and so $f\in \mathbf{k}
\mathrm{Irr(S_n)} + \mathrm{Id(S_n)}$, a contradiction.
\smallskip

\noindent
{\bf Case II.} $\overline{f} \notin \mathrm{Irr(S_n)}$. Then by the
definition of $\mathrm{Irr(S_n)}$, we have $\overline{f} =
q|_{\overline{s}}$ for some $q\in \calb^\star(\Delta X)$, $s\in S_n$
and $q|_s$ is normal. Thus $\lbar{q|_s} = q|_{\lbar{s}} = \lbar{f}$
and so $\overline{f-q|_s} < \overline{f}$. If $f=q|_s$, then $f\in
\mathrm{Id(S_n)}$, a contradiction. If $f\neq q|_s$, then $f-q|_s \neq
0$ with $\overline{f-q|_s} < \overline{f}$. By the minimality of
$\overline{f}$, we have $f-q|_s \in \mathbf{k} \mathrm{Irr(S_n)} +
\mathrm{Id(S_n)}$. This implies that $f\in \mathbf{k} \mathrm{Irr(S_n)}
+ \mathrm{Id(S_n)}$, again a contradiction.

Hence $\mathbf{k} \mathrm{Irr(S_n)} + \mathrm{Id(S_n)} = \mathbf{k}
\calb(\Delta_n X)$. Suppose $\mathbf{k} \mathrm{Irr(S_n)} \cap
\mathrm{Id(S_n)} \neq 0$ and let $0\neq f\in \mathbf{k}
\mathrm{Irr(S_n)} \cap \mathrm{Id(S_n)}$. Then
$$f = c_1v_1 +c_2v_2 + \cdots + c_kv_k,$$
where $v_1 >v_2>\cdots>v_k \in \mathrm{\mathrm{Irr(S_n)}}$. Since
$f\in \mathrm{Id(S_n)}$, by Item (b), we have $v_1 = \overline{f} =
q|_{\overline{s}}$ for some $q\in \calb^\star(\Delta_n X)$, $s\in S_n$
and $q|_s$ is normal. This is a contradiction to the construction of
$\mathrm{\mathrm{Irr(S_n)}}$. Therefore $\mathbf{k} \mathrm{Irr(S_n)}
\oplus \mathrm{Id(S_n)} = \mathbf{k} \calb(\Delta_n X)$ and
$\mathrm{Irr(S_n)}$ is a $\mathbf{k}$-basis of $\mathbf{k}\calb(\Delta
X)/\mathrm{Id(S_n)}$.

\mref{it:cdc} $\Rightarrow \mref{it:cda}:$ Suppose $f,g\in S_n$ give an intersection or
including composition. Let $F=fu$ and $G=vg$ in the case of
intersection composition and let $F=f$ and $G=q|_g$ in the case of
including composition. Then we have $w:= \overline{F} =\overline{G}
$. If $(f,g)_w = F-G=0$, then there is nothing to prove. If $(f,g)_w
\neq 0$, then we have
$$(f,g)_w = \sum_i c_iq_i,\quad 0\neq c_i\in \mathbf{k}, q_1>q_2>\cdots>q_k \in \calb(\Delta_n X).$$
Then $q_i <\overline{F} =\lbar{G}= w$. Since $(f,g)_w \in
\mathrm{Id(S_n)}$, by Item(c), we have that the $q_i$ are not in
$\mathrm{Irr(S_n)}$. By the definition of $\mathrm{Irr(S_n)}$, there are
$q_i\in \calb^\star(\Delta_n X)$, $s_i\in S_n$ such that $q_i=
q_i|_{\lbar{s_i}}$ and $q_i|_{s_i}$ is normal. Since
$\lbar{q_i|_{s_i}} = q_i|_{\lbar{s_i}} < w$, we have $(f,g)_w \equiv
0$ mod $(S_n,w)$.

For any composition of multiplication $fu$ where $f\in S_n$
and $u \in C(\Delta_n X) \calb(\Delta_n X)$, we have $fu \in \Id(S_n)$. By
Lemma \ref{expression}, it follows that $fu = \sum_i c_i
q_i|_{s_i}$ where $0\neq c_i\in \bfk$, $s_i\in S_n$, $q_i\in
\calb^\star(\Delta_n X)$, $q_i|_{s_i}$ is normal and
$q_i|_{\overline{s_i}} \leq \overline{fu}$. Hence the composition
of multiplication is trivial mod $[S_n]$.

For any composition of derivation $d^\ell(f)$ where $f\in S_n$ and
$\ell\in \mathbb{Z}_{\geq 1}$, we have $d^\ell(f) \in
\mathrm{Id(S_n)}$. By Lemma \ref{expression}, we have $d^\ell(f)
= \sum_i c_i q_i|_{s_i}$ where $0\neq c_i\in \bfk$, $s_i\in S_n$,
$q_i\in \calb^\star(\Delta_n X)$, $q_i|_{s_i}$ is normal and
$q_i|_{\overline{s_i}} \leq \overline{fP(v)}$. Hence the composition
of derivation $d^\ell(f)$ is trivial mod $[S_n]$.

Therefore $S_n$ is a Gr\"obner-Shirshov basis.
\end{proof}

\section{Gro\"{o}bner-Shirshov bases and free commutative integro-differential algebras}
\mlabel{sec:gs}

In this section we begin with a finite set $X$ and prove that the relation ideal of the free commutative differential Rota-Baxter algebra on $X$ of order $n$, where $n\geq 1$, that defines the corresponding commutative integro-differential algebra of order $n$ possesses a Gr\"obner-Shirshov basis. This is done in Section~\mref{ss:gsb}. Then in Section~\mref{ss:bases}, we apply the Composition-Diamond Lemma in Theorem~\mref{thm:CD lemma} to construct a canonical basis for the commutative integro-differential algebra of order $n$.
Taking $n$ to go to the infinity, we obtain a canonical basis of the free commutative integro-differential algebra on the finite set $X$. Finally for any well-ordered set $X$, by showing that the canonical basis of the free commutative integro-differential algebra on each finite subset of $X$ is compatible with the inclusion of the subset in $X$, we obtain a canonical basis of the free commutative integro-differential algebra on $X$.

\subsection{Gr\"obner-Shirshov basis}
\mlabel{ss:gsb}
We begin with a lemma that simplifies the defining ideal of the integro-differential algebra.

\begin{lemma}
Let $X$ be a finite set and let $\sha(\bfk[\Delta_n X])$ be the free commutative differential Rota-Baxter algebra on $X$. The differential Rota-Baxter ideal of $\sha(\bfk[\Delta_n X])$ generated by the set
$$\left\{
    P(d(u) P(v))- uP(v)+ P(uv) + \lambda P(d(u) v)\,\big|\,  u, v\in \sha(\bfk[\Delta_n X]).
\right\}
$$
is generated by
\begin{equation}
 S_n:= \left\{P(d(u) P(v))- uP(v)+ P(uv) + \lambda P(d(u) v)\,\big|\, u, v\in  \sha(\bfk[\Delta_n X]), u\notin P( \sha(\bfk[\Delta_n X]))\right\}.  \label{eq2}
\end{equation}
\end{lemma}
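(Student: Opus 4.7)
The plan is to show that every element of the larger generating set which involves $u \in P(\sha(\bfk[\Delta_n X]))$ already equals zero in $\sha(\bfk[\Delta_n X])$, so that such elements trivially lie in any ideal, in particular in the ideal generated by $S_n$. The opposite containment is immediate from the inclusion of generators.

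Concretely, suppose $u = P(w)$ for some $w \in \sha(\bfk[\Delta_n X])$, and let $v \in \sha(\bfk[\Delta_n X])$ be arbitrary. Using the differential Rota-Baxter axiom $d \circ P = \id$ from Eq.~(\mref{eq:fft}), we have $d(u) = d(P(w)) = w$. Substituting this into the integro-differential relation
$$P(d(u) P(v))- uP(v)+ P(uv) + \lambda P(d(u) v)$$
produces
$$P(w P(v)) - P(w) P(v) + P(P(w) v) + \lambda P(w v).$$
The Rota-Baxter identity Eq.~(\mref{eq:rb}) applied to $w$ and $v$ in $\sha(\bfk[\Delta_n X])$ gives $P(w) P(v) = P(w P(v)) + P(P(w) v) + \lambda P(w v)$, so the displayed expression is identically zero in $\sha(\bfk[\Delta_n X])$.

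Consequently, every generator of the form in the larger set with $u \in P(\sha(\bfk[\Delta_n X]))$ is already $0$ and therefore lies in the ideal generated by $S_n$. Since the remaining generators of the larger set form precisely $S_n$, the two differential Rota-Baxter ideals coincide. The main (and only) step is the short computation above; there is no subtle obstacle, as the result is essentially the observation, already noted after Eq.~(\mref{eq:ibpl0}) in the introduction, that Eq.~(\mref{eq:ibpl}) applied with $u$ of the form $P(w)$ reduces to the Rota-Baxter identity in the presence of $d\circ P = \id$.
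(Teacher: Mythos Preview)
Your proof is correct and follows exactly the same approach as the paper: both show that when $u=P(w)$ the expression vanishes by combining $d\circ P=\id$ with the Rota-Baxter identity, so the extra generators are zero. Your version simply spells out the computation that the paper summarizes in one line.
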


\begin{proof}
If $u$ is in $P(\sha(\bfk[\Delta_n X]))$, let $u=P(\hat{u})$
for some $\hat{u}\in \sha(\bfk[\Delta_n X])$. Then $P(d(u) P(v))- uP(v)+ P(uv) + \lambda P(d(u) v)$ vanishes since $P$ is a Rota-Baxter algebra. This proves the lemma.
\end{proof}

We show that $S_n$ is a Gr\"{o}bner-Shirshov basis of the ideal $\Id(S_n)\subseteq \sha(\bfk[\Delta_n X])$.

\begin{lemma}
Let $\phi(u,v) \in S_n$ with $u\in  \calb(\Delta_n X) \setminus P(
\calb(\Delta_n X))$ and $v\in \calb(\Delta_n X)$. Then
$\lbar{\phi(u,v)} = 1\otimes \lbar{d(u_0)} \otimes w$ for some
$u_0\in \cm(\Delta_n X)$ and $w \in \calb(\Delta_n X)$.
\mlabel{lemma:leading term}
\end{lemma}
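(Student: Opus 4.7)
The plan is to isolate the dominant summand of
$\phi(u,v)=P(d(u)P(v))-uP(v)+P(uv)+\lambda P(d(u)v)$ by a depth comparison, and then to read off the leading DRB monomial of that summand directly from the mixable shuffle formula.

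First, I would perform a depth count. Since the mixable shuffle of tensors of depths $p$ and $q$ has leading depth $p+q-1$, while the operator $P$ prepends a tensor entry of $1$ (raising depth by one) and $d$ preserves depth on DRB monomials of depth at least one, I get
\[
\dep(P(d(u)P(v)))=\dep(u)+\dep(v)+1,
\]
whereas each of the three remaining summands $uP(v)$, $P(uv)$ and $P(d(u)v)$ has depth $\dep(u)+\dep(v)$. Because the well-order $\leq_n$ of Eq.~(\mref{eq7}) compares depth first, this forces $\lbar{\phi(u,v)}=\lbar{P(d(u)P(v))}=1\otimes\lbar{d(u)P(v)}$.

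Second, I would write $u=u_0\otimes\bar u$ with $u_0\in\cm(\Delta_n X)$ and $\bar u\in\calb(\Delta_n X)$ (with $\bar u$ empty when $\dep(u)=1$). The hypothesis $u\notin P(\calb(\Delta_n X))$ forces $u_0\neq 1$. By the derivation formula of Theorem~\mref{thm:freediffrb},
\[
d(u)=d(u_0)\otimes\bar u+(\text{terms of depth strictly less than }\dep(u)).
\]
For any $a\in\cm(\Delta_n X)$, the mixable shuffle formula yields $(a\otimes\bar u)\shpr P(v)=a\otimes Z(\bar u,v)$, where
\[
Z(\bar u,v)=\bar u\shpr(1\otimes v)+(1\otimes\bar u)\shpr v+\lambda\,\bar u\shpr v
\]
depends only on $\bar u$ and $v$ (and equals $v$ when $\bar u$ is empty). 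By $\bfk$-linearity in $a$ and by adding the strictly lower-depth pieces of $d(u)$, this gives
$d(u)\shpr P(v)=d(u_0)\otimes Z(\bar u,v)+(\text{strictly $\leq_n$-smaller terms})$, whose leading DRB monomial is $\lbar{d(u_0)}\otimes\lbar{Z(\bar u,v)}$. Prepending the outer $P$ then produces $\lbar{\phi(u,v)}=1\otimes\lbar{d(u_0)}\otimes w$ with $w:=\lbar{Z(\bar u,v)}\in\calb(\Delta_n X)$, as required.

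The main technical obstacle is the factorization $(a\otimes\bar u)\shpr P(v)=a\otimes Z(\bar u,v)$. I use crucially that $P(v)$ begins with the entry $1$, so the leftmost tensor slot of the shuffle is simply $a\cdot 1=a$ and no mixing interacts with $a$. This is what guarantees that, after taking a linear combination over the monomials of the polynomial $d(u_0)\in\bfk[\Delta_n X]$ and comparing depths against the lower-depth contributions of $d(u)$, the factor $\lbar{d(u_0)}$ survives unmodified as the leading commutative monomial in the second tensor slot of $\lbar{\phi(u,v)}$.
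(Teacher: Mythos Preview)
Your proof is correct and follows essentially the same approach as the paper's: write $u=u_0\otimes\hat u$ with $u_0\neq 1$, observe that $P(d(u)P(v))$ dominates by depth, and use the mixable shuffle formula to factor out $\lbar{d(u_0)}$ as the second tensor slot. Your $Z(\bar u,v)$ is precisely the paper's $\hat u\,\ssha_\lambda\, v$; you simply spell out the depth comparison and the shuffle factorization that the paper leaves implicit in its one-line computation.
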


\begin{proof} Let $u = u_0 \otimes \hat{u}$ with $1\neq
u_0 \in \cm(\Delta_n X) $ and $\hat{u}\in \calb(\Delta_n X)$
(take $\hat{u}=1\in \bfk$ when $u\in \cm(\Delta_n X)$). Then
\begin{equation}
\begin{aligned}
\lbar{\phi(u,v)} &= \lbar{P(d(u)P(v))} = \lbar{P(d(u_0 \otimes
\hat{u}) (1\otimes v))} = \lbar{P( d(u_0) \otimes (\hat{u} \sha_\lambda v) )}\\
&= \lbar{ P(d(u_0) \otimes w) }=P(\lbar{d(u_0)} \otimes w) = 1
\otimes \lbar{d(u_0)} \otimes w,
\end{aligned}
\end{equation}
where $w = \lbar{\hat{u} \sha_\lambda v} \in \calb(\Delta_n X)$.
\end{proof}

By the above lemma, we see that $\lbar{\phi(u,v)} \in
P(\calb(\Delta_n X))$ and so $\lbar{\phi(u,v)} \notin \cm(\Delta_n
X)$. So from Definition \mref{lemma:overlape}, there is no
intersection compositions in $S_n$. The following two lemmas show
that other kinds of compositions in $S_n$ are trivial.

\begin{lemma}
The compositions of multiplication and derivation are trivial
mod $[S_n]$. \mlabel{lemma:comtrivial1}
\end{lemma}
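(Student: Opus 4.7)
My plan splits the argument cleanly according to the two types of composition.

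For the composition of derivation $d^{\ell}(\phi(u,v))$, $\ell\geq 1$, I will show directly that $d(\phi(u,v))=0$, after which $d^{\ell}(\phi(u,v))=0$ and the triviality mod $[S_n]$ is automatic (the empty sum). Using $d\circ P=\id$ and the Leibniz rule $d(xy)=d(x)y+xd(y)+\lambda d(x)d(y)$, a short calculation gives $d(P(d(u)P(v)))=d(u)P(v)$, $d(uP(v))=d(u)P(v)+uv+\lambda d(u)v$, $d(P(uv))=uv$, and $d(P(d(u)v))=d(u)v$, and the four contributions cancel in pairs when combined with the signs in $\phi(u,v)$.

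For the composition of multiplication $\phi(u,v)\cdot w$ with $w\in C(\Delta_n X)P(\calb(\Delta_n X))$, the crucial ingredient is the identity
\[
\phi(u,v)\cdot P(b) = \phi(u,vP(b))+\phi(u,P(v)b)+\lambda\,\phi(u,vb)+P\bigl(\phi(u,v)\cdot b\bigr),
\]
valid for every $b\in\calb(\Delta_n X)$. I will derive this by expanding the four products $P(d(u)P(v))P(b)$, $uP(v)P(b)$, $P(uv)P(b)$, and $\lambda P(d(u)v)P(b)$ using Eq.~(\mref{eq:rb}), then applying Eq.~(\mref{eq:rb}) once more to rewrite $d(u)P(v)P(b)$ as $d(u)P(vP(b))+d(u)P(P(v)b)+\lambda d(u)P(vb)$, and finally collecting the residue into the single $P(\phi(u,v)\cdot b)$ factor. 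Writing the general multiplier as $w=aP(b)$ with $a\in C(\Delta_n X)$ and multiplying through by $a$, each of the first three summands becomes $(a\star)|_{\phi(u,v'')}$ with $\phi(u,v'')\in S_n$, since the constraint $u\notin P(\calb(\Delta_n X))$ is preserved in the first slot.

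The remaining term $aP(\phi(u,v)\cdot b)$ is handled by induction on $\dep(b)$. The base case $b\in C(\Delta_n X)$ gives $\phi(u,v)\cdot b=b\phi(u,v)=(b\star)|_{\phi(u,v)}$, which is normal by Lemma~\mref{lemma:leading term}. In the inductive step $b=b_0P(b')$ with $\dep(b')<\dep(b)$, I apply the identity to $\phi(u,v)P(b')$, obtaining $\phi(u,v)\cdot b=b_0[\phi(u,vP(b'))+\phi(u,P(v)b')+\lambda\phi(u,vb')+P(\phi(u,v)\cdot b')]$ and invoking the inductive hypothesis on the last summand; each resulting $\star$-monomial is of the shape $cP^{i}(c'\star)$ built from $c,c'\in C(\Delta_n X)$ and iterated $P$-wrappings, so substituting $\lbar{\phi(u,v'')}=1\otimes \lbar{d(u_0)}\otimes w''$ (Lemma~\mref{lemma:leading term}) lands in $\calb(\Delta_n X)$ and the substitution is normal. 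The main obstacle is verifying the leading-term bound $q_i|_{\lbar{s_i}}\leq\lbar{\phi(u,v)w}$ for every summand: using the weakly monomial order of Proposition~\mref{weakmonomial}, I compare depths and second tensor factors; the leading term of $\phi(u,v)P(b)$ arises from the mixable shuffle of $1\otimes\lbar{d(u_0)}\otimes w'$ with $1\otimes b$ and dominates under $\leq_n$ both $\lbar{\phi(u,vP(b))}$ (and its $P(v)b$, $vb$ analogues, which have a smaller second tensor slot since $\lbar{d(u_0)}$ sits in the second position rather than the mixed position) and $\lbar{P(\phi(u,v)\cdot b)}$, the latter because $P$ merely prepends a $1\otimes$ whereas the shuffle on the left already generates deeper or lex-larger terms. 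This comparison carries through the outer $a$-factor and through the $\dep(b)$-induction without change.
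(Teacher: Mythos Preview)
Your proposal is correct and follows essentially the same route as the paper: you show $d(\phi(u,v))=0$ for the derivation composition, derive the key identity $\phi(u,v)P(b)=\phi(u,vP(b))+\phi(u,P(v)b)+\lambda\phi(u,vb)+P(\phi(u,v)b)$ for the multiplication composition, and then handle the residual $P(\phi(u,v)b)$ term by induction on $\dep(b)$. The one place where your write-up is looser than the paper's is the leading-term bound: the paper dispatches this cleanly via the single observation $\lbar{P(aP(b))},\,\lbar{P(bP(a))},\,\lbar{P(ab)}\leq \lbar{P(a)P(b)}$, whereas your ``compare depths and second tensor factors'' discussion is vaguer and would benefit from being replaced by that inequality.
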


\begin{proof}
 Let $$f := \phi(u,v) := P(d(u) P(v))- uP(v)+ P(uv) + \lambda
P(d(u) v) \in S_n,$$ where $u\in  \calb(\Delta_n X) \setminus P(
\calb(\Delta_n X))$ and $v\in \calb(\Delta_n X)$. First, we check
that the compositions of derivation are trivial mod $(S_n)$. By Eq.~(\ref{eq:diffl}) and Eq.~(\ref{eq:fft}), we have

$$d(f) = d(u)P(v) - d(uP(v)) +uv + \lambda d(u)v =d(u)P(v)  - d(u)P(v) - uv -\lambda d(u)v +uv + \lambda d(u)v=0.$$
Hence $d^\ell(f) \equiv 0$ mod $[S_n]$ for any $\ell\in{\geq 1}$.

Next, we check that the compositions of multiplication
$\phi(u,v)w_0 P(w)$ with $w_0 \in C(\Delta_n X)$ and $w\in \calb(\Delta_n X)$ are trivial. Since $w_0 \in C(\Delta_n X)$, it is sufficient to show that $\phi(u,v)P(w)$ is trivial. Note that
$\overline{\phi(u,v)}\in P( \calb(\Delta_n X) )$ by Lemma
\ref{lemma:leading term}. From Eq.~(\ref{eq:rb}) we obtain
\begin{equation}
\begin{aligned} \label{eq5}
\phi(u,v) P(w) =& P(d(u)P(v))P(w) -(uP(v)) P(w) + P(uv)P(w) + \lambda  P(d(u) v)P(w)\\
=& P(P(d(u)P(v))w) + P(d(u)P(v)P(w)) + \lambda P(d(u)P(v)w) \\
&- u P(v)P(w) + P(uv)P(w) + \lambda  P(d(u) v) P(w) \\
=& P(P(d(u)P(v))w) + P(d(u)P(P(v)w + vP(w) + \lambda vw)) + \lambda
P(d(u)P(v)w)\\
&- u P(v)P(w) + P(uv)P(w) + \lambda  P(d(u) v)P(w)
\end{aligned}
\end{equation}
Since $ \phi(u,v) = P(d(u) P(v))- uP(v)+ P(uv) + \lambda P(d(u) v)$,
we have

\begin{align}\label{eq3}
P(P(d(u)P(v))w) =P(\phi(u,v)w) + P(uP(v)w) - P(P(uv)w) -\lambda
P(P(d(u)v)w),
\end{align}
\begin{equation}
\begin{aligned} \label{eq4}
&P(d(u)P(P(v)w + vP(w) + \lambda vw))\\
=& \phi(u,P(v)w + vP(w) + \lambda vw ) + uP(P(v)w + vP(w) + \lambda vw) \\
& - P(u(P(v)w + vP(w) + \lambda vw)) - \lambda P(d(u) (P(v)w + vP(w) + \lambda vw)) \\
=& \phi(u,P(v)w + vP(w) + \lambda vw) + uP(wP(v)) + uP(vP(w)) +
\lambda uP(vw)
-P(uwP(v))\\
&  - P(uvP(w)) - \lambda P(uvw) -\lambda P(d(u) wP(v)) -\lambda
P(d(u)vP(w)) -\lambda^2 P(d(u)vw)
\end{aligned}
\end{equation}
and
\begin{equation}
\begin{aligned} \label{eq6}
&- u P(v)P(w) + P(uv)P(w) + \lambda  P(d(u) v)P(w)\\
=& -uP(P(v)w) -uP(vP(w)) -\lambda u P(vw) + P(P(uv)w)+ P(uvP(w)) +
\lambda P(uvw) \\
&+ \lambda P(P(d(u)v)w) +\lambda P(d(u)vP(w)) +\lambda^2 P(d(u)vw).
\end{aligned}
\end{equation}
Substituting Eq.~(\ref{eq3}), Eq.~(\ref{eq4}) and Eq.~(\ref{eq6})
into Eq.~(\ref{eq5}), we have
\begin{align*}
\phi(u,v)P(w) = P(\phi(u,v)w) + \phi(u,wP(v))  + \phi(u, vP(w)) +
\lambda \phi(u,  vw)
\end{align*}
The last three terms are
already in $S_n$ and hence are of the form $q|_s$ with $q=\star$ and $s\in S_n$. So we just need to bound the leading terms. Note that
$$\lbar{P(aP(b))}, \lbar{P(bP(a))},
\lbar{P(ab)} \leq \lbar{P(a)P(b)}  \text{ for } a,b\in
\calb(\Delta_n X).$$ So we have
$$\overline{\phi(u,wP(v) ) } =  \lbar{P(d(u)P(wP(v)))} \leq \lbar{ P(d(u) P(v)P(w))}
\leq \lbar{P(d(u)P(v))P(w)}= \overline{\phi(u,v)P(w)}.$$
We similarly show that $\lbar{\phi(u, vP(w))}, \lbar{\phi(u,vw)} \leq
\lbar{\phi(u,v)P(w)}$. So $\phi(u,wP(v))  + \phi(u, vP(w)) + \lambda
\phi(u,  vw) \equiv 0$ mod $[S_n]$. Hence $\phi(u,v) P(w) \equiv 0$
mod $[S_n]$ if and only if $ P(\phi(u,v)w) \equiv 0$ mod $[S_n]$. We
prove the latter statement by induction on $\mathrm{dep}(w)$.

If $\mathrm{dep}(w) =1$, that is, $w\in \cm(\Delta_n X)$, let $q :=
P(\star w) \in \calb^\star(\Delta_n X)$. Then $q|_{\phi(u,v)} =
P(\phi(u,v) w) $ and $q|_{\phi(u,v)}$ is normal by $w\in \cm(\Delta_n
X)$. Since
\begin{align*}
\overline{P(\phi(u,v) w)}=\overline{P(\overline{\phi(u,v)} w)}
=\overline{P(P(d(u)P(v)) w)} \leq \overline{P(d(u)P(v)) P(w)} =
\overline{ \overline{\phi(u,v) } P(w)} = \overline{ \phi(u,v) P(w)},
\end{align*}
we have $P(\phi(u,v)w)\equiv 0$ mod $[S_n]$.

Suppose $w\in \cm(\Delta_n X) P( \calb(\Delta_n X))$ and let $w =
w_1P(\tilde{w})$ with $w_1 \in \cm(\Delta_n X)$ and $\tilde{w} \in
\calb(\Delta_n X)$. Since $\mathrm{dep}(\tilde{w})<\mathrm{dep}(w)$,
by the induction hypothesis, we may assume that
$$\phi(u,v) P(\tilde{w})  = \sum_i c_i
p_i|_{s_i},$$ where $0\neq c_i\in \bfk, p_i\in \calb^\star(\Delta_n
X), s_i\in S_n$, $p_i|_{s_i}$ is normal and $\lbar{p_i|_{s_i}} \leq
\lbar{\phi(u,v) P(\tilde{w})}$. Let $q_i := P(w_1p_i)$. Since
$p_i|_{s_i}$ is normal and $w_1\in \cm(\Delta_n X)$, it follows that
$q_i|_{s_i}$ is normal. Furthermore, we have
$$P(\phi(u,v) w) = P( \phi(u,v) w_1 P(\tilde{w})) = \sum_i c_i P(w_1 p_i|_{s_i}) = \sum_i c_i q_i|_{s_i}$$
and $$\lbar{q_i|_{s_i}}= \lbar{P(w_1 p_i|_{s_i})} \leq \lbar{P(w_1
\phi(u,v) P(\tilde{w}) )} = \lbar{P(\phi(u,v) w)} \leq
\lbar{\phi(u,v) P(w)}.$$ Therefore $P(\phi(u,v)w)\equiv 0$ mod
$[S_n]$. This completes the induction. Hence $\phi(u,v) P(w) \equiv 0$ mod $[S_n]$, as needed.
\end{proof}

\begin{lemma}\mlabel{lemma:comtrivial2}
The including compositions in $S_n$ are trivial.
\end{lemma}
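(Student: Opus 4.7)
My plan is to classify all including compositions $(f,g)^q_w$ in $S_n$ by analyzing where $\lbar g$ can appear as a subword of $\lbar f$, and then verify in each configuration that $f - q|_g$ is trivial modulo $(S_n,w)$. Fix $f = \phi(u_1,v_1)$ and $g = \phi(u_2,v_2)$ in $S_n$, writing $u_i = u_{i,0} \otimes \hat u_i$ with $u_{i,0} \in \cm(\Delta_n X)$ nonidentity. By Lemma~\ref{lemma:leading term}, the leading terms $\lbar f = 1 \otimes \lbar{d(u_{1,0})} \otimes W_1$ and $\lbar g = 1 \otimes \lbar{d(u_{2,0})} \otimes W_2$, with $W_i = \lbar{\hat u_i \sha_\lambda v_i}$, both begin with an outer Rota-Baxter operator $P$. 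Thus the subword relation $\lbar f = q|_{\lbar g}$ forces the outer $P$ of $\lbar g$ to coincide with one of the nested $P$'s in the bracketed-monomial form of $\lbar f$, and I would split into two cases accordingly.

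In the case $q = \star$ we have $\lbar f = \lbar g$, and matching the second tensor factors gives $\lbar{d(u_{1,0})} = \lbar{d(u_{2,0})}$, from which one recovers $u_{1,0} = u_{2,0}$ (the smallest letter in $\lbar{d(u_{i,0})}$ is a strict derivative obtained by differentiating the smallest letter of $u_{i,0}$, and lowering its order by one recovers $u_{i,0}$). The remaining shuffle-leading terms then satisfy $W_1 = W_2$. The leading-term contributions $P(d(u_i)P(v_i))$ of $f$ and $g$ both equal $w$ modulo strictly lower-order terms and cancel in $f - g$, while the other summands $-u_iP(v_i) + P(u_iv_i) + \lambda P(d(u_i)v_i)$ for $i=1,2$ already carry leading terms strictly below $w$. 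Even though $(\hat u_1, v_1)$ and $(\hat u_2, v_2)$ need not be equal, I would express each resulting residual as $p_j|_{s_j}$ with $s_j \in S_n$, $p_j|_{s_j}$ normal and $\lbar{p_j|_{s_j}} < w$, invoking Lemma~\ref{lemma:comtrivial1} to absorb any multiplication- or derivation-composition residues that surface along the way.

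In the case $q \neq \star$, the outer $P$ of $\lbar g$ matches an internal $P$ of $\lbar f = P(\lbar{d(u_{1,0})}\, P(c_1 P(c_2 \cdots P(c_k))))$ for $W_1 = c_1 \otimes \cdots \otimes c_k$, and the requirement that the factor right after this $P$ in $\lbar g$ be $\lbar{d(u_{2,0})}$ forces $c_j = \lbar{d(u_{2,0})}$ at the matching index $j$. I would then decompose
\[
f - q|_g = (f - \lbar f) - q|_{g - \lbar g},
\]
since $\lbar f = q|_{\lbar g}$ cancels by hypothesis. Lemma~\ref{leqnormal} guarantees that for each summand $h$ of $g - \lbar g$ the substitution $q|_h$ is either zero or normal, while Proposition~\ref{weakmonomial} keeps its leading term strictly below $w$; the same bound applies term by term to $f - \lbar f$. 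Combining with Lemma~\ref{lemma:comtrivial1} to handle any multiplication compositions arising from these substitutions, $f - q|_g$ is trivial modulo $(S_n,w)$.

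The principal obstacle is Case 1: the accidental coincidence $W_1 = W_2$ means $(\hat u_1, v_1)$ and $(\hat u_2, v_2)$ can genuinely differ (for instance $(\hat u_1, v_1) = (1, 1 \otimes y)$ and $(\hat u_2, v_2) = (1 \otimes y, 1)$ both yield $W = 1 \otimes y$), so I must untangle the difference $\phi(u_1, v_1) - \phi(u_2, v_2)$ into an explicit sum of normal $p_j|_{s_j}$ with strictly smaller leading terms. This is the most delicate step, and it will require careful use of the mixable shuffle expansion together with the Leibniz and Rota-Baxter axioms to reorganize the non-leading contributions.
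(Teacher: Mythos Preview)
Your decomposition $f - q|_g = (f - \lbar f) - q|_{g - \lbar g}$ is algebraically correct but does not establish triviality modulo $(S_n, w)$. The condition $(f,g)_w \equiv 0$ mod $(S_n,w)$ demands an expression $\sum_j c_j p_j|_{s_j}$ with each $s_j \in S_n$, each $p_j|_{s_j}$ normal, and each $p_j|_{\lbar{s_j}} < w$. Your two pieces fail this: $\lbar f$ and $\lbar g$ are single DRB monomials, not elements of $S_n$, so neither $f - \lbar f$ nor $q|_{g - \lbar g}$ even lies in $\Id(S_n)$, let alone admits the required representation. Having leading term below $w$ is automatic for $(f,g)_w$ (the leading terms cancel by hypothesis) and is not what must be checked. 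The same gap appears in your Case~1: the residuals $-u_1 P(v_1) + u_2 P(v_2)$, $P(u_1 v_1) - P(u_2 v_2)$, etc., are not of the form $p_j|_{s_j}$ with $s_j \in S_n$, and you give no mechanism to produce such an expression; invoking Lemma~\ref{lemma:comtrivial1} does not help, since that lemma rewrites $q|_s$ for $s \in S_n$, and your pieces are not of that shape.

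The paper's proof works differently. It does not split on $q = \star$ versus $q \neq \star$, but on whether the occurrence of $\lbar g$ in $w$ sits inside the $v$-argument or the $u$-argument of $\phi(u,v)$, i.e., $v = p|_{\lbar g}$ or $u = p|_{\lbar g}$ for some $p \in \calb^\star(\Delta_n X)$. In each case one writes out $f - q|_g$ and systematically substitutes the defining identity $P(d(r)P(s)) = \phi(r,s) + rP(s) - P(rs) - \lambda P(d(r)s)$ into every appearance of $P(d(r)P(s))$ among the lower-order terms. After cancellation this yields an explicit linear combination of terms that are either elements $\phi(\cdot,\cdot)$ of $S_n$ outright, or of the form $q'|_{\phi(r,s)}$ for some $q' \in \calb^\star(\Delta_n X)$; each has leading term strictly below $w$. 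Lemma~\ref{normal expression} (whose hypothesis is exactly Lemma~\ref{lemma:comtrivial1}) then converts every $q'|_{\phi(r,s)}$ into a sum of normal substitutions with the same bound. This rewriting into $\phi$-terms is the substance of the argument and cannot be replaced by leading-term bookkeeping alone.
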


\begin{proof}
We need to show that the ambiguities of all possible including
compositions of the polynomials in $S_n$ are trivial. The ambiguities of all such compositions are of the form
$$P(d(u)P(q|_{P(d(v) P(w))})) \text { and } P(d(q|_{P(d(u) P(v))}) P(w)).$$
Let two elements $f$ and $g$ of $S_n$ be
given. They are of the form
$$f := \phi(u,v), \quad g:= \phi(r,s), \quad u,v\in \calb(\Delta_n X)\setminus P(\calb(\Delta_n X)) \text{ and } r,s \in \calb(\Delta_n X).$$

\noindent
{\bf Case I.} Suppose $v=
p|_{\overline{g}}=p|_{\overline{\phi(r,s)}} = p|_{P(d(r)P(s))}$ for
some $p\in \calb^\star (\Delta_n X)$ and $$w := \overline{f} =
\overline{\phi(u,v)}=\lbar{P(d(u)P(v))} = \lbar{
P(d(u)P(p|_{\overline{g}})) } = \lbar{ q|_{\overline{g}}} =
q|_{\overline{g}},$$ with $q= P(d(u)P(p)) \in \calb^\star(\Delta_n
X)$ and $q|_g$ being normal. Then
\begin{align*}
f &=\phi(u,v) = P(d(u)P(p|_{P(d(r)P(s))})) -uP(p|_{P(d(r)P(s))}) +
P(u p|_{P(d(r)P(s))}) + \lambda P(d(u) p|_{P(d(r)P(s))})
\end{align*}
and
\begin{align*}
 q|_g &=
q|_{\phi(r,s)} = P(d(u)P(p|_{P(d(r)P(s))})) - P(d(u)P(p|_{rP(s)})) +
P(d(u)P(p|_{P(rs)})) + \lambda P(d(u)P(p|_{ P(d(r)s)})).
\end{align*}
So we have
\begin{equation}
\begin{aligned}
(f,g)_w :=&f -q|_g \\
=&-uP(p|_{P(d(r)P(s))}) + P(u p|_{P(d(r)P(s))}) + \lambda
P(d(u) p|_{P(d(r)P(s))})\\
& + P(d(u)P(p|_{rP(s)})) -  P(d(u)P(p|_{P(rs)})) - \lambda
P(d(u)P(p|_{ P(d(r)s)})).
\end{aligned}
\mlabel{eq19}
\end{equation}
Since $ \phi(u,v) = P(d(u) P(v))- uP(v)+ P(uv) + \lambda P(d(u) v)$,
we have
{\small
\begin{equation}
\begin{aligned}
-uP(p|_{P(d(r)P(s))}) &=  -uP(p|_{\phi(r,s)}) - uP(p|_{rP(s)}) +
uP(p|_{P(rs)}) + \lambda uP(p|_{P(d(r) s)})\\
P(u p|_{P(d(r)P(s))}) &= + P(up|_{\phi(r,s)}) + P(up|_{rP(s)}) -
P(up|_{P(rs)}) - \lambda P(up|_{P(d(r) s)})\\
\lambda P(d(u) p|_{P(d(r)P(s))}) & = + \lambda P(d(u)
p|_{\phi(r,s)}) + \lambda P(d(u) p|_{rP(s)}) - \lambda P(d(u)
p|_{P(rs)}) - \lambda^2 P(d(u) p|_{P(d(r) s)})\\
P(d(u)P(p|_{rP(s)})) &= \phi(u, p|_{rP(s)}) + uP(p|_{rP(s)}) - P(u
p|_{rP(s)}) -\lambda P(d(u) p|_{rP(s)})\\
-  P(d(u)P(p|_{P(rs)}))&= - \phi(u, p|_{P(rs)}) - uP(p|_{P(rs)}) +
P(u p|_{P(rs)}) + \lambda P(d(u) p|_{P(rs)})\\
- \lambda P(d(u)P(p|_{ P(d(r)s)})) &= - \lambda \phi(u,
p|_{P(d(r)s)}) - \lambda uP(p|_{P(d(r)s)}) + \lambda P(u
p|_{P(d(r)s)}) + \lambda^2 P(d(u) p|_{P(d(r)s)}). \end{aligned}
\label{eq20}
\end{equation}
}
From Eq.~(\ref{eq19}) and Eq.~(\ref{eq20}), it follows that
$$(f,g)_w  = -uP(p|_{\phi(r,s)}) + P(up|_{\phi(r,s)}) + \lambda
P(d(u)p|_{\phi(r,s)}) + \phi(u, p|_{rP(s)})- \phi(u, p|_{P(rs)}) -
\lambda \phi(u, p|_{P(d(r)s)}).$$
By Lemma \mref{operator ideal}, we
have
$$uP(p|_{\phi(r,s)}), P(up|_{\phi(r,s)}), \lambda
P(d(u)p|_{\phi(r,s)})\in Id(S_n) $$ and
$$\phi(u, p|_{rP(s)}), \phi(u,
p|_{P(rs)}), \phi(u, p|_{P(d(r)s)}) \in S_n\subseteq Id(S_n).$$
Since
\begin{align*}
\overline{ uP(p|_{\phi(r,s)}) }, \quad \overline{ P(up|_{\phi(r,s)})
}, \quad \overline{ P(d(u)p|_{\phi(r,s)}) } < \overline{\phi(u,
p|_{\phi(r,s)})}=\overline{\phi(u,v)} =w
\end{align*}
and
\begin{align*}
\overline{\phi(u, p|_{rP(s)})}, \quad \overline{\phi(u,
p|_{P(rs)})}, \quad \overline{\phi(u, p|_{P(d(r)s)})} <
\overline{\phi(u, p|_{\overline{\phi(r,s)}})} =
\overline{\phi(u,v)}=w,
\end{align*}
we have that $(f,g)_w \equiv 0$ mod $(S_n,w)$.
\smallskip

\noindent
{\bf Case II.} Suppose $u =p|_{\overline{g}}
=p|_{\overline{\phi(r,s)}} = p|_{P(d(r)P(s))}$ for some $p\in
\calb^\star(\Delta_n X)$ and $$w := \overline{f} =
\overline{\phi(u,v)}= \lbar{ P(d(u)P(v)) }= \lbar{
P(d(p|_{\overline{\phi(r,s)}}) P(v)) }= \lbar{q|_{\lbar{g}}}=
q|_{\lbar{g}},
$$ with $q= P(d(p)P(v)) \in \calb^\star(\Delta_n X)$ and $q|_g$ being normal.
Then
\begin{align*}
f &=\phi(u,v) = P(d(p|_{P(d(r)P(s))})P(v)) - p|_{P(d(r)P(s))}P(v) +
P(p|_{P(d(r)P(s))} v) + \lambda P(d(p|_{P(d(r)P(s))}) v)
\end{align*}
and
\begin{align*}
 q|_g &= q|_{\phi(r,s)} = P(d(p|_{P(d(r)P(s))})P(v)) -
P(d(p|_{rP(s)})P(v)) + P(d(p|_{P(rs)})P(v)) +\lambda P(d(p|_{P(d(r)
s)})P(v)).
\end{align*}
We have
\begin{align*}
(f,g)_w :=& f -q|_g \\
=& -p|_{P(d(r)P(s))}P(v) + P(p|_{P(d(r)P(s))} v) +\lambda
P(d(p|_{P(d(r)P(s))}) v) \\
& + P(d(p|_{rP(s)})P(v)) - P(d(p|_{P(rs)})P(v)) - \lambda
P(d(p|_{P(d(r) s)})P(v))\\
=& -p|_{\phi(r,s)} P(v)- p|_{rP(s)} P(v)+ p|_{P(rs)} P(v)+ \lambda
p|_{P(d(r)s)} P(v)\\
& +P(p|_{\phi(r,s)} v) + P(p|_{rP(s)} v) - P(p|_{P(rs)} v) -\lambda
P(p|_{P(d(r)s)} v)\\
& +\lambda P(d(p|_{\phi(r,s)}) v) + \lambda P(d(p|_{rP(s)}) v) -
\lambda P(d(p|_{P(rs)}) v) - \lambda^2 P(d(p|_{P(d(r)s)}) v)\\
&+\phi(p|_{rP(s)}, v) + p|_{rP(s)}P(v) - P(p|_{rP(s)} v) - \lambda
P(d(p|_{rP(s)}) v)\\
&- \phi(p|_{P(rs)}, v) - p|_{P(rs)}P(v) + P(p|_{P(rs)} v) + \lambda
P(d(p|_{P(rs)}) v)\\
&- \lambda \phi(p|_{P(d(r) s)}, v) - \lambda p|_{P(d(r) s)}P(w) +
\lambda P(p|_{P(d(r) s)} v) + \lambda^2 P(d(p|_{P(d(r) s)}) v)\\
=&-p|_{\phi(r,s)} P(v) +P(p|_{\phi(r,s)} v)+\lambda
P(d(p|_{\phi(r,s)}) v) + \phi(p|_{rP(s)}, v)- \phi(p|_{P(rs)}, v)-
\lambda \phi(p|_{P(d(r) s)}, v).
\end{align*}
By Lemma \mref{operator ideal}, we have
$$p|_{\phi(r,s)} P(v), P(p|_{\phi(r,s)} v),
P(d(p|_{\phi(r,s)}) v) \in Id(S_n)$$ and
$$ \phi(p|_{rP(s)}, v), \phi(p|_{P(rs)}, v),
\phi(p|_{P(d(r) s)}, v) \in S_n \subseteq Id(S_n).$$ Since
\begin{align*}
\overline{p|_{\phi(r,s)} P(v)}, \quad \overline{P(p|_{\phi(r,s)}
v)}, \quad \overline{P(d(p|_{\phi(r,s)}) v)} <
\overline{\phi(p|_{\phi(r,s)},v)} =\overline{\phi(u,v)}= w
\end{align*}
and
\begin{align*}
\overline{\phi(p|_{rP(s)}, v)}, \quad \overline{\phi(p|_{P(rs)},
v)}, \quad \overline{\phi(p|_{P(d(r) s)}, v)} <
\overline{\phi(p|_{\overline{\phi(r,s)}},v)} =\overline{\phi(u,v)}=
w,
\end{align*}
it follows that $(f,g)_w \equiv 0$ mod $(S_n,w)$.
\end{proof}

By the remark before Lemma~\mref{lemma:comtrivial1}, Lemmas \mref{lemma:comtrivial1} and \mref{lemma:comtrivial2}, it
follows immediately that

\begin{theorem}\label{GSbase}
$S_n$ is a Gr\"{o}bner-Shirshov basis in $\bfk \calb(\Delta_n X)$. Hence $\mathrm{Irr}(S_n)$ in Theorem~\mref{thm:CD lemma} is a linear basis of $\sha(\bfk[\Delta_n X])/\Id(S_n)$.
 \mlabel{lemma:GSbase}
\end{theorem}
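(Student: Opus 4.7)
The plan is to assemble the theorem directly from the preparatory lemmas by verifying the hypotheses of the Composition-Diamond Lemma (Theorem~\ref{thm:CD lemma}). Recall that a set $S_n \subseteq \bfk\calb(\Delta_n X)$ of monic DRB polynomials is a Gr\"obner-Shirshov basis precisely when four kinds of compositions are trivial: compositions of multiplication and compositions of derivation are trivial modulo $[S_n]$, and intersection compositions together with including compositions of pairs $f,g \in S_n$ are trivial modulo $(S_n, w)$. So I would organize the proof into one brief check per type of composition.

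First I would handle intersection compositions, which turns out to be vacuous. By Definition~\ref{lemma:overlape}(d), any intersection composition requires the leading monomial $\lbar{f}$ to lie in $\cm(\Delta_n X)$. But Lemma~\ref{lemma:leading term} shows that for every $\phi(u,v) \in S_n$ the leading term has the form $1 \otimes \lbar{d(u_0)} \otimes w \in P(\calb(\Delta_n X))$, hence is not in $\cm(\Delta_n X)$. Therefore no pair of elements of $S_n$ admits an intersection pair, and there is nothing to verify.

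Next I would invoke the two main preparatory lemmas. Lemma~\ref{lemma:comtrivial1} asserts precisely that every composition of multiplication $\phi(u,v) \cdot u'$ and every composition of derivation $d^\ell(\phi(u,v))$ is trivial modulo $[S_n]$. Lemma~\ref{lemma:comtrivial2} asserts that every including composition $(f,g)_w$ for $f,g \in S_n$ is trivial modulo $(S_n, w)$. Together with the absence of intersection compositions, this verifies each clause required in the definition of a Gr\"obner-Shirshov basis. Consequently, by applying Theorem~\ref{thm:CD lemma}\eqref{it:cda}$\Rightarrow$\eqref{it:cdc}, the set $\mathrm{Irr}(S_n)$ is a $\bfk$-basis of $\bfk\calb(\Delta_n X)/\mathrm{Id}(S_n)$. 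Identifying $\bfk\calb(\Delta_n X)$ with $\sha(\bfk[\Delta_n X])$ via the basis $\calb(\Delta_n X)$ of Rota-Baxter monomials, and noting that $\mathrm{Id}(S_n)$ corresponds to the differential Rota-Baxter ideal described in Equation~(\ref{eq2}), we conclude that $\mathrm{Irr}(S_n)$ is a basis of the quotient $\sha(\bfk[\Delta_n X])/\mathrm{Id}(S_n)$, completing the proof.

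Since the main obstacles — the leading term analysis, the multiplication/derivation triviality, and the including composition triviality — have all been discharged in Lemmas~\ref{lemma:leading term}, \ref{lemma:comtrivial1}, and~\ref{lemma:comtrivial2}, the only work remaining in the theorem itself is bookkeeping: citing these three lemmas and invoking the Composition-Diamond Lemma. The proof should therefore be quite short, essentially two or three sentences.
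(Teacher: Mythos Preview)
Your proposal is correct and follows essentially the same approach as the paper: the paper's proof is a one-sentence appeal to the remark (from Lemma~\ref{lemma:leading term}) that there are no intersection compositions, together with Lemmas~\ref{lemma:comtrivial1} and~\ref{lemma:comtrivial2}, and then the Composition-Diamond Lemma yields the basis statement. Your organization and level of detail are exactly right.
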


\subsection{Bases for free commutative integro-differential algebras}
\mlabel{ss:bases}
We next identify $\mathrm{Irr}(S_n)$ and thus obtaining a canonical basis of $\sha(\bfk[\Delta_n X])/\Id(S_n)$.

\begin{lemma}
Let $\leq$ be the linear order on $C(\Delta X)$ defined in
Eqs.~(\mref{eq:difford}) and (\mref{eq:lex}), and $u = u_0 u_1\cdots u_k \in C(\Delta X)$ with
$u_0,\cdots, u_k \in \Delta X$ and $u_0 \geq \cdots \geq u_k$. Then
$\lbar{d_X(u)} = u_0 u_1\cdots u_{k-1}d_X(u_k)$.
If $u$ is in $C(\Delta_n X)$, then $\lbar{d_X(u)} = u_0 u_1\cdots u_{k-1}d_X(u_k)$ provided $u_k\in \Delta_{n-1} X$.
\mlabel{lemma:diffleadterm}
\end{lemma}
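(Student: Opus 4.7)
The plan is to induct on $k$, the length of $u$ minus one. The base case $k=0$ is immediate since then $d_X(u)=d_X(u_0)$. For the inductive step, I would decompose $u=u_0\tilde u$ with $\tilde u=u_1\cdots u_k$ still sorted in decreasing order, and expand via Eq.~(\mref{eq:prodind}):
\[
d_X(u)=d_X(u_0)\,\tilde u+u_0\, d_X(\tilde u)+\lambda\, d_X(u_0)\,d_X(\tilde u)=:T_1+T_2+T_3.
\]
By the inductive hypothesis, the leading monomial of $d_X(\tilde u)$ is $u_1\cdots u_{k-1}d_X(u_k)$. Because every letter appearing in any monomial of $d_X(\tilde u)$ is either some $u_j$ with $j\geq 1$ or of the form $d_X(u_j)=x_j^{(i_j+1)}<u_j$, each such letter is $\leq u_1\leq u_0$ in the order of Eq.~(\mref{eq:difford}). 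Therefore left multiplication by the single letter $u_0$ prepends $u_0$ to the sorted form and preserves the lex order on monomials, giving $\overline{T_2}=u_0u_1\cdots u_{k-1}d_X(u_k)$, which is the monomial the lemma targets.

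The next step is to compare $T_1$ and $T_3$ against $T_2$. For $T_3$, the leading multiset is obtained from $\overline{T_2}$'s multiset by replacing $u_0$ with the strictly smaller letter $d_X(u_0)$, and replacing a letter in a sorted word by a strictly smaller one strictly decreases the sorted form in the length-lex order, yielding $\overline{T_3}<\overline{T_2}$. For $T_1$, I will prove a swap lemma: among all monomials $u_0\cdots u_{j-1}d_X(u_j)u_{j+1}\cdots u_k$ obtained by differentiating a single position, the sorted form is lex-maximal when $j=k$. The proof proceeds by downward induction on $j$. If $u_j>u_{j+1}$, then the $(j{+}1)$-th entry of the sorted form after differentiating position $j$ is $\max(d_X(u_j),u_{j+1})<u_j$, strictly less than the $(j{+}1)$-th entry $u_j$ obtained by differentiating position $k$ (whose sorted form starts $u_0,\ldots,u_{k-1},d_X(u_k)$). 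If $u_j=u_{j+1}$, then $d_X(u_j)=d_X(u_{j+1})$ and differentiating positions $j$ and $j+1$ produce identical multisets, so the claim reduces to the inductive case $j+1$. This gives $\overline{T_1}\leq\overline{T_2}$.

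Combining, the leading monomial of $d_X(u)$ is $u_0u_1\cdots u_{k-1}d_X(u_k)$, with coefficient equal to a positive integer from $T_2$ plus a possible contribution of $1$ from $T_1$ in the totally tied case $u_0=\cdots=u_k$; both contributions are positive integers (no power of $\lambda$ enters the leading coefficient), so no cancellation can occur and the coefficient is nonzero. For the order-$n$ assertion, the argument is identical: the hypothesis $u_k\in\Delta_{n-1}X$ ensures $d_X(u_k)$ is a nonzero element of $\Delta_n X$, while any factor $u_j=x^{(n)}$ at an earlier position simply makes its contribution to $T_1$ or $T_3$ vanish in the quotient, which only strengthens the dominance of $T_2$. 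I expect the main obstacle to be the swap lemma with the tie-breaking case $u_j=u_{j+1}$, where passing to multisets rather than ordered words is what makes the comparison go through cleanly.
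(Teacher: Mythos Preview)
Your proof is correct and shares the paper's overall strategy (induction on $k$, expand via Eq.~(\mref{eq:prodind}), compare the three summands), but you decompose $u$ by peeling off the \emph{first} letter $u_0$, whereas the paper peels off the \emph{last} letter, writing $u=\hat{u}\,u_k$ with $\hat{u}=u_0\cdots u_{k-1}$. In the paper's decomposition the target monomial $\hat{u}\,d_X(u_k)$ appears directly as one of the three terms (a single monomial, no induction needed to identify it), and the induction hypothesis is used only to determine $\overline{d_X(\hat{u})}=u_0\cdots u_{k-2}d_X(u_{k-1})$; the comparison is then a short case split on the relative sizes of $d_X(u_{k-1})$ and $u_k$. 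In your decomposition the target arises as $\overline{T_2}$ via the induction hypothesis, and you then need the swap lemma to bound $T_1$ (your downward induction is in fact a bit more than required, since only the $j=0$ versus $j=k$ comparison is needed). Both routes are valid; the paper's is slightly more direct in that the target is exhibited explicitly rather than through the hypothesis, while yours is more careful about why the leading coefficient cannot vanish in the tied case, a point the paper leaves implicit.
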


\begin{proof}
We prove the first statement by induction on $k\geq 0$. If $k=0$, then $u=u_0\in \Delta X$ and there is nothing to prove.

Assume the result holds for $k\leq m$, where $m\geq 0$, and consider the case when $k =
m+1$. Then $u = u_0 u_1\cdots u_{m+1}$ with $u_0,\cdots, u_{m+1} \in
\Delta X$ and $u_0 \geq \cdots \geq u_{m+1}$. Let $\hat{u} = u_0
u_1\cdots u_{m}$. Then $$d_X(u) = d_X(\hat{u}u_{m+1}) = \hat{u}
d_X(u_{m+1}) + d_X(\hat{u}) u_{m+1} + \lambda d_X(\hat{u})
d(u_{m+1}).$$ By the induction hypothesis, we have $\lbar{d_X(\hat{u})}
= u_0u_1\cdots d_X(u_m)$. So $\lbar{d_X(\hat{u}) u_{m+1}} =
u_0u_1\cdots d_X(u_m) u_{m+1}$ and $\lbar{d_X(\hat{u}) d_X(u_{m+1}})
= u_0u_1\cdots d_X(u_m) d_X(u_{m+1})$. If $d_X(u_m) \geq u_{m+1}$,
then since $u_m
> d_X(u_m)$ and $u_{m+1} > d_X(u_{m+1})$, we have $$\hat{u}
d_X(u_{m+1})= u_0 u_1\cdots u_{m} d_X(u_{m+1}) > u_0u_1\cdots
d_X(u_m) u_{m+1} > u_0u_1\cdots d_X(u_m) d_X(u_{m+1})$$ and so
$\lbar{d_X(u)} = u_0 u_1\cdots u_{m} d_X(u_{m+1})$. If $u_{m+1}
> d_X(u_m)$ and $u_m >u_{m+1}$, then since $u_m
> d_X(u_m)$, we have $$\hat{u}
d_X(u_{m+1})= u_0 u_1\cdots u_{m} d_X(u_{m+1}) > u_0u_1\cdots
 u_{m+1} d_X(u_m), u_0u_1\cdots d_X(u_m) d_X(u_{m+1})$$ and hence
$\lbar{d_X(u)} = u_0 u_1\cdots u_{m} d_X(u_{m+1})$. If $u_{m+1}
> d_X(u_m)$ and $u_m =u_{m+1}$, then since $u_m
> d_X(u_m)$, we have $$\hat{u}
d_X(u_{m+1})= u_0 u_1\cdots u_{m} d_X(u_{m+1}) = u_0u_1\cdots
 u_{m+1} d_X(u_m)> u_0u_1\cdots d_X(u_m) d_X(u_{m+1})$$ and so
$\lbar{d_X(u)} = u_0 u_1\cdots u_{m} d_X(u_{m+1})$. This completes the induction. The proof of the second statement then follows since under the condition $u_k\in \Delta_{n-1} X$, $d_X(u_k)$ does not change in $\Delta X$ or in $\Delta_n X$.
\end{proof}
We now give the key concept to define $\mathrm{Irr}(S_n)$.
\begin{defn}
Let $u\in \cm(\Delta X)$ with standard form in Eq.~(\mref{eq30}):
\begin{align}
u=u_0^{j_0}\cdots u_k^{j_k}, \text{ where } u_0,\cdots, u_k\in
\Delta X, u_0 > \cdots > u_k \text{ and } j_0,\cdots,j_k\in
\mathbb{Z}_{\geq 1}. \notag
\end{align}
Call $u$ {\bf functional} if either $u\in \{1\} \cup X$ or $j_k
>1$. Denote
$$\cala_f:=\{u\in C(\Delta_n X)\,|\, u \text{ is functional }\}, \quad  \bfk\{X\}_f := \bfk \cala_f \text{ and } A_{f,0} = \bfk (\cala_f\backslash \{1\}). $$
\end{defn}

\begin{prop}
Let $X$ be a finite well-ordered set. Let $(A, d_X):=(\bfk\{ X \}, d_X) := (\bfk[\Delta X], d_X)$ be the
free commutative differential algebra on $X $. Then $A = A_f \oplus d_X(A)$. \mlabel{functional}
\end{prop}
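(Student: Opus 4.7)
The plan is to prove $A = A_f \oplus d_X(A)$ by separately establishing the spanning equality $A_f + d_X(A) = A$ and the triviality of the intersection $A_f \cap d_X(A) = 0$, exploiting the well-order on $\cm(\Delta X)$ from Eq.~(\mref{eq7}) and the leading-term formula of Lemma~\mref{lemma:diffleadterm}.

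For the directness, I would argue by contradiction: suppose $a \in A_f \cap d_X(A)$ with $a \neq 0$, and write $a = d_X(b)$ for some $b = b_0^{j_0}\cdots b_k^{j_k}$ in standard form. Since $a$ is a $\bfk$-linear combination of functional monomials, its leading monomial $\lbar{a}$ is itself functional. However, Lemma~\mref{lemma:diffleadterm} forces $\lbar{a} = \lbar{d_X(b)} = b_0^{j_0}\cdots b_{k-1}^{j_{k-1}} b_k^{j_k-1} d_X(b_k)$, whose smallest factor $d_X(b_k) = b_k^{(1)}$ is a strict derivative appearing with multiplicity one, and whose overall monomial is neither $1$ nor a single letter of $X$. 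Such a monomial fails both criteria for functionality, contradicting $\lbar{a} \in \cala_f$. Hence $a = 0$.

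For the spanning property, I would proceed by induction on $u \in \cm(\Delta X)$ in the well-order. The base case $u = 1$ is immediate since $1 \in \cala_f$. For the inductive step, if $u$ is functional we are done; otherwise $u = u_0^{j_0}\cdots u_k^{j_k}$ in standard form has $j_k = 1$ and its smallest factor $u_k$ is a strict derivative, say $u_k = x^{(m)}$ with $m \geq 1$. I would construct an antiderivative candidate $b$ with smallest factor $x^{(m-1)}$ via two subcases: if $u_{k-1} \neq x^{(m-1)}$ (which includes the case $k = 0$), set $b = u_0^{j_0}\cdots u_{k-1}^{j_{k-1}} x^{(m-1)}$; if $u_{k-1} = x^{(m-1)}$, set $b = u_0^{j_0}\cdots u_{k-2}^{j_{k-2}} u_{k-1}^{j_{k-1}+1}$. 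In either case, Lemma~\mref{lemma:diffleadterm} combined with the Leibniz rule of weight $\lambda$ yields $\lbar{d_X(b)} = u$ with nonzero leading coefficient $c$ (equal to $1$ or $j_{k-1}+1$ respectively). Thus $u - c^{-1} d_X(b)$ is a $\bfk$-linear combination of monomials strictly less than $u$, and by the induction hypothesis it lies in $A_f + d_X(A)$; therefore $u$ does too.

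The main obstacle is the case analysis in the antiderivative construction. In the first subcase one must verify $u_{k-1} > x^{(m-1)}$ so that $b$'s standard form genuinely has $x^{(m-1)}$ as its smallest factor: this follows from $u_{k-1} > u_k = x^{(m)}$ by inspecting the order structure on $\Delta X$, since either $u_{k-1} = y^{(j)}$ with $y > x$ (automatically above every $x^{(\ell)}$) or $u_{k-1} = x^{(j)}$ with $j < m$, in which case $u_{k-1} \neq x^{(m-1)}$ forces $j < m - 1$ and hence $u_{k-1} > x^{(m-1)}$. In the second subcase one expands $d_X(u_{k-1}^{j_{k-1}+1})$ by iterated Leibniz to confirm that the leading coefficient is $j_{k-1} + 1$. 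With these identities in hand the inductive step closes cleanly, and together with the leading-term argument for directness this yields the desired decomposition.
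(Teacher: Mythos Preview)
Your approach is different from the paper's---the paper proceeds by induction on $|X|$, writing $A = B \otimes C$ with $B = \bfk\{x_1,\ldots,x_{m-1}\}$ and $C = \bfk\{x_m\}$ and combining the inductive decompositions of $B$ and $C$, whereas you argue directly via leading terms. The direct route is attractive, but as written it has two genuine gaps.

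First, in the directness argument you write ``$a = d_X(b)$ for some $b = b_0^{j_0}\cdots b_k^{j_k}$ in standard form,'' treating $b$ as a single monomial. But $a \in d_X(A)$ only gives $b \in A$, a general linear combination. You would need the additional fact that $b_1 >_n b_2$ in $\cm(\Delta X)$ implies $\lbar{d_X(b_1)} >_n \lbar{d_X(b_2)}$, so that $\lbar{d_X(b)} = \lbar{d_X(\lbar{b})}$; this is true and not hard, but it must be supplied. Second, and more seriously, your spanning argument invokes ``induction on $u \in \cm(\Delta X)$ in the well-order,'' but the order from Eq.~(\mref{eq:difford}) is \emph{not} well-founded on $\cm(\Delta X)$: already $x^{(0)} > x^{(1)} > x^{(2)} > \cdots$ is an infinite descending chain. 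The order $\leq_n$ of Eq.~(\mref{eq7}) is a well-order only on $\cm(\Delta_n X)$ for finite $n$, and your recursion does not stay inside a fixed $\cm(\Delta_n X)$ (forming $b$ and then applying $d_X$ can raise the maximal differential order). So the transfinite induction as stated is not justified, and you would need either a different well-founded measure or a reduction to the filtered pieces. A related issue: your claim that for non-functional $u$ the smallest factor $u_k$ is a strict derivative $x^{(m)}$ with $m \geq 1$ fails when $|X| \geq 2$; e.g.\ $u = xy$ with $x > y$ in $X$ has $j_k = 1$ and smallest factor $y = y^{(0)}$, so your antiderivative construction does not apply.
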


\begin{proof}
We prove the result by induction on $|X|\geq 1$. The case when $|X|=1$ has been proved in~\mcite{GRR}.  Suppose the result holds
for all $X$ such that $|X| < m$ and consider the case when $|X|=m$. Let
$X=\{x_1, x_2, \cdots, x_m\}$ with $x_1 > \cdots > x_m$,  $B = \bfk
\{x_1, \cdots, x_{m-1} \}$ and $C = \bfk\{x_m\}$.
Also denote
$$A_f:=\bfk\{X\}_f,\  B_f:=\bfk\{x_1,\cdots,x_{m-1}\}_f,\ C_f:=\bfk\{x_m\}_f,\ C_{f,0}=\bfk\{x_m\}_{f,0}.$$
By the induction hypothesis, we have
$$B= B_f \oplus d_X(B) \text{ and } C= C_f \oplus d_X(C).$$
Then by the definition of $A_f$, we have
\begin{equation}
A_f =(B_f\ot \bfk) \oplus (B \otimes C_{f,0}) = (B_f\ot
\bfk) \oplus (B_f \ot C_{f,0}) \oplus (d_X(B) \ot C_{f,0}) = (B_f \ot C_f) \oplus (d_X(B) \ot C_{f,0}).
\mlabel{eq:af}
\end{equation}
Therefore $B_f=B_f\ot 1 \subseteq A_f$ and $C_f=1\ot C_f\subseteq A_f$.
Thus $B = B_f \oplus d_X(B) \subseteq A_f + d_X(A)$ and $C = C_f \oplus d_X(C) \subseteq A_f + d_X(A)$. Since $A =B\otimes C $ is generated as an algebra by $B \ot 1$ and $1\ot C$, we have $A \subseteq A_f + d_X(A)$ and so $A_f + d_X(A) = A$.

We are left to show that $A_f \cap d_X(A)= 0$. Let $\calb:=B\cap \cm(\Delta X)$ (resp. $\calb_f:=B_f\cap \cm(\Delta X)$, resp. $\calc:=C\cap \cm(\Delta X)$, resp. $\calc_f:=C_f\cap \cm(\Delta X)$) be the basis of monomials of $B$ (resp. $B_f$, resp. $C$, resp. $C_f$). Then a nonzero element $w$ of $A=B\ot C$ is a sum
$$w=\sum_{i=1}^k u_i\ot \sum_{j=1}^{n_i} \alpha_{ij} v_{ij} = \sum_{i,j}\alpha_{ij} u_i\ot v_{ij} , $$
where $u_1>\cdots>u_k\in \calb, v_{i1}>\cdots>v_{i n_i}\in \calc, 0\neq k_{ij}\in \bfk, 1\leq j\leq n_i.$
Then we have
\begin{equation}
d_X(w)=d_X\left(\sum_{i,j} k_{ij} u_i\ot v_{ij}\right)=\sum_{i,j} \alpha_{ij}\left (d_X(u_i)\ot v_{ij} +u_i \ot d_X(v_{ij}) + \lambda d_X(u_i)\ot d_X(v_{ij})\right).
\mlabel{eq:dsum}
\end{equation}
We distinguish the following three cases.
\smallskip

\noindent
{\bf Case 1.} If $v_{11}\neq 1$, then the leading term in the sum in Eq.~(\mref{eq:dsum}) is $u_1\ot \overline{d_X(v_{11})}$. Since $C_f \cap d_X(C) = 0$, we have $\overline{d_X(v_{11})} \notin \calc_f$. Then $u_1\ot \overline{d_X(v_{11})}\notin \calb\ot \calc_f$. Since  $ \calb \ot
\calc_f$ is a basis of $B \ot C_f$, we have $u_1 \otimes \overline{d_X(v_{11})} \notin B \otimes
C_f$. Therefore $d_X(\sum_{ij} k_{ij} u_i\ot v_{ij})\notin B\ot C_f$. By Eq.~(\mref{eq:af}) we have
$$B \otimes C_f = B_f \ot C_f \oplus d_X(B) \ot C_f = B_f \ot C_f
\oplus d_X(B) \ot C_{f,0} \oplus d_X(B) \ot \bfk =A_f \oplus d_X(B)
\ot \bfk.$$
Therefore $d_X(w)\notin A_f$.

{\bf Case 2.} If $v_{11}=1$ and either $\lbar{d_X(u_1)}>u_2$ or $\lbar{d_X(u_1)}=u_2$ and $v_{21}=1$, then since $d_X(1)=0$, by the definition of the order defined on $\Delta X$, the leading term in the sum in Eq.~(\mref{eq:dsum}) is $\overline{d_X(u_1)}\ot 1$ where $\overline{d_X(u_1)}\in \frakB$ denotes the leading term of $d_X(u_1)$. Since $B_f\cap d_X(B)=0$, we have $\overline{d_X(u_1)}\ot 1\notin \calb_f$. Then $\overline{d_X(u_1)}\ot 1\notin \calb_f\ot \calc$ and hence not in $B_f\ot C$. Also $1\notin \calc_{f,0}$ implies that $\overline{d_X(u_1)}\ot 1\notin \calb\ot \calc_{f,0}$. Here $\calc_{f,0}=\calc\backslash \{1\}$ is the standard basis of $C_{f,0}$. Thus
$\overline{d_X(u_1)}\ot 1\notin (\calb_f\ot \calc)\cup (\calb\ot \calc_{f,0}).$
Then we have $\overline{d_X(u_1)}\ot 1 \notin (B_f \ot C) + (B\ot C_{f,0})$ and hence $d_X(\sum_{ij} k_{ij} u_i\ot v_j) \notin (B_f \ot C) + (B\ot C_{f,0})$. Then $d_X(w)$ is not in $A_f$ by Eq.~(\mref{eq:af}).
\smallskip

\noindent
{\bf Case 3.} If $v_{11}=1$, $\lbar{d_X(u_1)}= u_2$ and $v_{21}\neq 1$ (note that $\lbar{d_X(u_1)}<u_2$ is impossible since $u_1>u_2$), then the leading term of the sum in Eq.~(\mref{eq:dsum}) is $u_2\ot \lbar{v_{21}}$. Then the proof is the same as for Case 1.
\smallskip

In summary, we have proved that $d_X(w)\notin A_f$ and hence $A_f \cap d_X(A)= 0$.
\end{proof}

\begin{lemma}
Let $A_f=\bfk\{X\}_f$, $A_n = \bfk [\Delta_n X]$, $A_{n,f} = A_{n}\cap A_f$ and $d_{A_n}$ to be the restriction $d|_{A_n}$ except $d(x^{(n)})=0$ for $x\in X$. Then
$A_n = A_{n,f}\oplus d_{A_n}(A_n)$. \mlabel{lemma:decom}
\end{lemma}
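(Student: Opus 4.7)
The plan is to deduce the lemma from Proposition~\mref{functional} by applying the canonical surjection $\pi_n\colon A = \bfk[\Delta X] \twoheadrightarrow A_n = \bfk[\Delta_n X]$ that sends $x^{(k)}\mapsto 0$ for $k > n$ and restricts to the identity on $\Delta_n X$. Its kernel is the ideal $I_n$ of $A$ generated by $\{x^{(k)} : x\in X,\ k>n\}$, and the definition of $d_{A_n}$ is precisely arranged so that $\pi_n\circ d_X = d_{A_n}\circ\pi_n$. The key preliminary observation is that $\pi_n(A_f) = A_{n,f}$: on a functional monomial $u\in A_f$, $\pi_n(u)$ equals $u$ when every factor of $u$ already lies in $\Delta_n X$ (in which case $u$ is again functional, so $u\in A_{n,f}$) and equals $0$ otherwise, while conversely every $u\in A_{n,f}$ is fixed by $\pi_n$. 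Applying $\pi_n$ to the identity $A = A_f + d_X(A)$ from Proposition~\mref{functional} then immediately yields $A_n = A_{n,f} + d_{A_n}(A_n)$.

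The principal obstacle is proving the sum is direct. Given $v\in A_{n,f}\cap d_{A_n}(A_n)$ with $v = d_{A_n}(w)$ for some $w\in A_n\subseteq A$, I would form $z := d_X(w)\in d_X(A)$ and note $\pi_n(z) = d_{A_n}(w) = v$, so $d_X(w) = v + \xi$ for some $\xi\in I_n$. Decomposing $\xi = \xi_f + d_X(\xi')$ with $\xi_f\in A_f$ via Proposition~\mref{functional}, we obtain $d_X(w - \xi') = v + \xi_f \in A_f\cap d_X(A) = \{0\}$, hence $v = -\xi_f$; in particular $\xi_f = -v$ lies in $A_n$.

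The crux is then to argue that this forces $v = 0$. I would establish the auxiliary sub-claim that the decomposition $A = A_f\oplus d_X(A)$ respects $I_n$, namely $I_n = (I_n\cap A_f) \oplus (I_n\cap d_X(A))$. Granting this, uniqueness of the $A = A_f\oplus d_X(A)$ decomposition applied to $\xi \in I_n$ forces $\xi_f \in I_n\cap A_f$, so $\xi_f\in A_n\cap I_n$; since the polynomial-ring factorization $A \cong A_n \otimes_\bfk \bfk[\{x^{(k)} : k>n\}]$ gives $A_n\cap I_n = 0$, we conclude $\xi_f = 0$ and hence $v = 0$. To establish the sub-claim my plan is to work at the level of the monomial generators $u\cdot x^{(k)}$ of $I_n$ (with $u\in\cm(\Delta X)$ and $k>n$), using the integration-by-parts identity
\[
u\cdot x^{(k)} \;=\; d_X\bigl(u\cdot x^{(k-1)}\bigr) - d_X(u)\cdot x^{(k-1)} - \lambda\, d_X(u)\cdot x^{(k)}
\]
to peel off one derivative while inducting on the maximal derivation order appearing, thereby separating functional and derivative contributions while keeping every intermediate term inside $I_n$. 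The main technical challenge will be tracking the weight-$\lambda$ Leibniz terms carefully enough to guarantee that the recursion produces functional parts which genuinely remain inside $I_n$ at every stage.
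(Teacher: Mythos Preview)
Your spanning argument via the projection $\pi_n$ is correct and differs from the paper's approach: the paper argues directly in $A_n$ by a minimal-counterexample induction, using Lemma~\mref{lemma:diffleadterm} to write any non-functional monomial $u \in C(\Delta_n X)$ as $\overline{d_{A_n}(v)}$ for a suitable $v$, then reducing to strictly smaller monomials.

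The directness, however, cannot be proved, and your auxiliary sub-claim $I_n = (I_n \cap A_f) \oplus (I_n \cap d_X(A))$ is false. Take $X = \{x\}$, $n = 1$, $\lambda = 0$. Then $x x^{(2)} \in I_1$, and from $d_X(x x^{(1)}) = (x^{(1)})^2 + x x^{(2)}$ its unique $A_f \oplus d_X(A)$ decomposition is $x x^{(2)} = -(x^{(1)})^2 + d_X(x x^{(1)})$. But $(x^{(1)})^2$ is a functional monomial (its smallest factor has exponent $2$) lying in $A_1$, hence not in $I_1$: the $A_f$-component of an element of $I_1$ has escaped $I_1$, and your integration-by-parts recursion will keep producing exactly such escapes. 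In fact this same computation shows $(x^{(1)})^2 = d_{A_1}(x x^{(1)}) \in A_{1,f} \cap d_{A_1}(A_1)$, so the intersection you are trying to prove trivial is \emph{not} zero and the lemma as literally stated is false. The paper's own one-line argument for directness rests on the assertion $d_{A_n}(A_n) \subseteq d_X(A)$, which the same example refutes since $(x^{(1)})^2 \in A_f$ forces $(x^{(1)})^2 \notin d_X(A)$ by Proposition~\mref{functional}. Fortunately, the downstream application in Theorem~\mref{PBW Base} only needs the monomial-level disjoint union $C(\Delta_n X) = (\cala_d \cap C(\Delta_n X)) \sqcup (\cala_f \cap C(\Delta_n X))$ from Lemma~\mref{decomposition}.\mref{it:decb}, not the module-level direct sum.
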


\begin{proof}
Since $A_{n,f} \subset A_f$, $d_{A_n}(A_{n}) \subseteq d_X(A)$ and
$A_f \cap d_X(A) = 0 $ by Proposition~\ref{functional}, we have $A_{n,f}
\cap \mathrm{im}(d_{A_n}) =0 $. Thus we only need to show $A_n\subseteq A_{n,f}+d_{A_n}(A_n)$ since $A_n\supseteq A_{n,f}+d_{A_n}(A_n)$ is clear. Suppose $A_n\not \subseteq A_{n,f}+d_{A_n}(A_n)$. There is a monomial $u\in C(\Delta_n X)$ in $A_n\backslash (A_{n,f}+d_{A_n}(A_n))$ that is minimal under the order $\leq_n$ on $\cm(\Delta_n X)$ defined in Eqs.~(\mref{eq:lex}) and (\mref{eq:difford}).
Then $u\not\in A_f$.  Assume the minimum variable in $u$ is $x$ and
$\ell$ is the highest differential order of $x$ in $u$. Then $u$ can be
expressed as $u = \hat{u} (x^{(\ell-1)})^{m} x^{(\ell)}$ with
$\hat{u}\in C(\Delta X)$ and $m\geq 0$. Let $v =
\hat{u}(x^{(\ell-1)})^{m+1} \in C(\Delta X)$. By Lemma
\mref{lemma:diffleadterm}, we have $u = \lbar{ d_{A_n}(v)}$.
So we can write $u =  d_{A_n}(v) - \sum_i c_iu_i$ with $0\neq c_i \in \bfk$ and $u> u_i$. Then $d_{A_n}(v)\in d_{A_n}(A_n)$ and $u_i \in A_{n,f} +
\mathrm{im}(d_{A_n})$ by the minimality of $u$ in $A_n\backslash (A_{n,f} +
\mathrm{im}(d_{A_n}))$. Thus $u\in
A_{n,f}+ \mathrm{im}(d_{A_n})$. This is a contradiction.
\end{proof}

\begin{lemma}\mlabel{decomposition}
\begin{enumerate}
\item
Let $\cala_{d} := \{ \lbar{ d_X(u)} \mid u\in C(\Delta X) \}$ and
$\cala_{f} := \{ u \in C(\Delta X) \mid u \text{ is functional} \}$.
Then $C(\Delta X)$ is the disjoint union of $\cala_{d}$ and $\cala_{f}$,
that is, $C(\Delta X) = \cala_{d} \sqcup \cala_{f}$.
\mlabel{it:deca}
\item
We have $C(\Delta_n X)= (\cala_{d} \cap C(\Delta_n X)) \sqcup (\cala_{f} \cap C(\Delta_n X))$.
\mlabel{it:decb}
\end{enumerate}
\end{lemma}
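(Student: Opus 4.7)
The plan is to establish (a) first, with (b) following at once by restriction. For the disjointness $\cala_d\cap\cala_f=\emptyset$, I would take any $v=v_0^{j_0}\cdots v_\ell^{j_\ell}\in C(\Delta X)\setminus\{1\}$ in standard form with $v_\ell=y^{(p)}$ and invoke Lemma~\mref{lemma:diffleadterm} to obtain $\lbar{d_X(v)}=v_0^{j_0}\cdots v_{\ell-1}^{j_{\ell-1}}v_\ell^{j_\ell-1}y^{(p+1)}$. Since $y^{(p+1)}$ is strictly smaller than $v_\ell$, and hence strictly smaller than every $v_i$, it is the smallest variable in the standard form of $\lbar{d_X(v)}$ and occurs with exponent one; because its derivative order $p+1\geq 1$ places it outside $X$ and $\lbar{d_X(v)}\neq 1$, the element $\lbar{d_X(v)}$ fails every criterion for functionality.

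For the covering $\cala_d\cup\cala_f=C(\Delta X)$, I would take a non-functional $w\in C(\Delta X)$ with standard form $w=w_0^{j_0}\cdots w_k^{j_k}$, so that $w\neq 1$, the smallest variable is $w_k=y^{(p)}$ with $p\geq 1$, and $j_k=1$, and then construct an explicit $v$ with $\lbar{d_X(v)}=w$. A short check shows that no element of $\Delta X$ lies strictly between $y^{(p-1)}$ and $y^{(p)}$, so the condition ``$y^{(p-1)}$ appears in $w$'' is equivalent to ``$k\geq 1$ and $w_{k-1}=y^{(p-1)}$''. In this first case I would set $v:=w_0^{j_0}\cdots w_{k-2}^{j_{k-2}}w_{k-1}^{j_{k-1}+1}$, raising the exponent of $y^{(p-1)}$ by one; in the complementary case (either $k=0$, or $w_{k-1}>y^{(p-1)}$) I would set $v:=w_0^{j_0}\cdots w_{k-1}^{j_{k-1}}y^{(p-1)}$, appending $y^{(p-1)}$ with exponent one. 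In both cases the smallest variable of $v$ is $y^{(p-1)}$, and applying Lemma~\mref{lemma:diffleadterm} together with $d_X(y^{(p-1)})=y^{(p)}$ yields $\lbar{d_X(v)}=w$.

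Part (b) then follows at once: for $w\in C(\Delta_n X)$ the $v$ produced above involves only variables already occurring in $w$ and the extra variable $y^{(p-1)}$, all of derivative order at most $n$, so $v\in C(\Delta_n X)$, and the disjointness is inherited from (a). The main obstacle is the case split in the covering step, most delicately the verification that $y^{(p-1)}$ really is the smallest variable of $v$ in the second case; this is precisely where the observation that $\Delta X$ admits no element strictly between $y^{(p-1)}$ and $y^{(p)}$ is used, after which Lemma~\mref{lemma:diffleadterm} does the remaining computational bookkeeping.
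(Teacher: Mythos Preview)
Your proposal is correct and follows essentially the same approach as the paper's proof: both establish disjointness by computing $\lbar{d_X(v)}$ via Lemma~\mref{lemma:diffleadterm} and observing its smallest factor has positive derivative order and multiplicity one, and both establish the covering by taking a non-functional $w$ with smallest factor $y^{(p)}$ and constructing $v$ by replacing that factor with $y^{(p-1)}$. Your explicit case split on whether $y^{(p-1)}$ already occurs in $w$, together with the observation that no element of $\Delta X$ lies strictly between $y^{(p)}$ and $y^{(p-1)}$, simply unpacks what the paper compresses into the single expression $u=\hat{u}(x^{(\ell-1)})^m x^{(\ell)}$ with $m\geq 0$; and your remark for part~(b) that the constructed $v$ stays in $C(\Delta_n X)$ is a bit more than is needed, since (b) follows from (a) purely by intersecting with $C(\Delta_n X)$.
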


\begin{proof}
\mref{it:deca} First we show that $\cala_{d} \cap \cala_{f} =\emptyset$. Let $\lbar{
d_X(u)} \in \cala_{d}$ with $u\in C(\Delta X)$. Suppose the standard
expression of $u$ is ${\hat u} (x^{(\ell)})^{m}$ for some ${\hat u}
\in C(\Delta X)$. Thus
$$\lbar{ d_X(u)} = \lbar{ {\hat u} d_X((x^{(\ell)})^{m}) }=  \lbar{ {\hat u}
(x^{(\ell)})^{m-1} x^{(\ell+1)}} =   \lbar{{\hat u}}
(x^{(\ell)})^{m-1} x^{(\ell+1)}$$ and so $\lbar{d_X(u)} \notin
\cala_{f}$. Next we show that $C(\Delta X) = \cala_{d} \cup \cala_{f}$. Let
$u\in C(\Delta X) \setminus  \cala_{f}$. Suppose the minimum variable in
$u$ is $x$ and $\ell$ is the largest differential degree of $x$.
Then $u$ can be expressed as $u = \hat{u} (x^{(\ell-1)})^{m}
x^{(\ell)}$ with $\hat{u}\in C(\Delta X)$ and $m\geq 0$. Let $v =
\hat{u}  (x^{(\ell-1)})^{m+1} \in C(\Delta X)$. By Lemma
\mref{lemma:diffleadterm}, we have that $u = \lbar{ d_X(v)} \in
\cala_{d}$. Hence $C(\Delta X) = \cala_{d}\sqcup \cala_{f}$.
\smallskip

\mref{it:decb} Since $C(\Delta_n X) \subseteq C(\Delta X)$, the result holds from Item~\mref{it:deca}.
\end{proof}

\begin{theorem}
 \mlabel{PBW Base}
Let $A_n, A_{n,f}$ be as defined in Lemma~\mref{lemma:decom} and let $I_{ID,n}$ be the differential Rota-Baxter ideal of
$\sha(A_n)$ generated by $S_n$ in Eq.~(\mref{eq2}). Then as tensor product of modules
$$\sha(A_n) / I_{ID,n} \cong A_n \oplus \left(\bigoplus_{k\geq 0} A_n\otimes A_{n,f}^{\otimes k} \otimes A_n \right).$$
\end{theorem}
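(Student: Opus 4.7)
The plan is to apply the Composition-Diamond Lemma (Theorem~\mref{thm:CD lemma}) together with Theorem~\mref{lemma:GSbase}, which establishes that $S_n$ is a Gr\"obner-Shirshov basis in $\bfk\calb(\Delta_n X)$. Then part~\mref{it:cdc} of Theorem~\mref{thm:CD lemma} yields that $\mathrm{Irr}(S_n)$ is a $\bfk$-basis of $\sha(A_n)/\mathrm{Id}(S_n) = \sha(A_n)/I_{ID,n}$. The theorem therefore reduces to identifying $\mathrm{Irr}(S_n)$ explicitly with the right-hand side of the claimed isomorphism.

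To describe $\mathrm{Irr}(S_n)$, I would first apply Lemma~\mref{lemma:leading term}: every $s=\phi(u,v)\in S_n$ has leading term $\lbar{s}=1\otimes\lbar{d(u_0)}\otimes w$, where $u_0\in C(\Delta_n X)\setminus\{1\}$ and $w\in\calb(\Delta_n X)$; in operator form $\lbar{s}=P(\lbar{d(u_0)}P(w))$. Next I would analyze the three shapes of $\star$-DRB monomials in Lemma~\mref{threecases} to determine when $q|_s$ is normal. A type~I monomial $q=p|_{d^\ell(\star)}$ always fails normality, since substituting $\lbar{s}$ creates a $d\circ P$ pattern; for types~II and III, normality forces the $\star$-slot to sit inside a $P$-context not adjacent to another $P$. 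Tracking where the pattern $P(\lbar{d(u_0)}P(w))$ can be embedded inside a DRB monomial $v_0\otimes v_1\otimes\cdots\otimes v_k$, I would conclude that the forbidden monomials are precisely those of depth at least three for which some interior factor $v_i$ with $1\leq i\leq k-1$ lies in $\cala_d$. For the converse, whenever such an interior $v_i=\lbar{d(u_0')}$ exists, the choice $s=\phi(u_0',\,v_{i+1}\otimes\cdots\otimes v_k)\in S_n$ together with the $q\in\calb^\star(\Delta_n X)$ obtained from $v_0\otimes\cdots\otimes v_k$ by replacing the $P$-subexpression beginning at position $i$ with $\star$ gives a normal $q|_s$ satisfying $q|_{\lbar{s}}=v_0\otimes v_1\otimes\cdots\otimes v_k$.

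By Lemma~\mref{decomposition}\mref{it:decb}, $C(\Delta_n X)=(\cala_d\cap C(\Delta_n X))\sqcup(\cala_f\cap C(\Delta_n X))$, so requiring $v_i\notin\cala_d$ is equivalent to requiring $v_i$ to be functional. Hence
$$\mathrm{Irr}(S_n)=\left\{v_0\otimes v_1\otimes\cdots\otimes v_k\,\Big|\,k\geq 0,\ v_0,v_k\in C(\Delta_n X),\ v_i\in\cala_f\cap C(\Delta_n X)\text{ for }1\leq i\leq k-1\right\}.$$
Partitioning by depth, the depth-$1$ elements assemble into $A_n$, while for each $k\geq 1$ the depth-$(k+1)$ elements give $A_n\otimes A_{n,f}^{\otimes(k-1)}\otimes A_n$, as the two boundary factors range over a basis of $A_n$ and the $k-1$ interior factors range over a basis of $A_{n,f}$. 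Reindexing $m=k-1$ then yields the stated $\bfk$-module isomorphism
$$\sha(A_n)/I_{ID,n}\cong A_n\oplus\bigoplus_{m\geq 0}A_n\otimes A_{n,f}^{\otimes m}\otimes A_n.$$

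The main obstacle is the case analysis in the second paragraph: I need to verify that no normal representation $q|_{\lbar{s}}$ can force the distinguished factor $\lbar{d(u_0)}$ to land in a boundary slot of the substituted monomial, and to confirm that depth-$1$ and depth-$2$ DRB monomials are automatically in $\mathrm{Irr}(S_n)$ because they have no interior slots to constrain. Once these normality matters are settled, the combinatorial repackaging of $\mathrm{Irr}(S_n)$ as the stated direct sum of tensor products is routine.
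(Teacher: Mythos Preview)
Your proposal is correct and follows essentially the same route as the paper: invoke Theorem~\mref{lemma:GSbase} and the Composition-Diamond Lemma to reduce to computing $\mathrm{Irr}(S_n)$, use Lemma~\mref{lemma:leading term} to pin down the shape of each $\lbar{s}$, and then use Lemma~\mref{decomposition}\mref{it:decb} to rewrite the complement in terms of functional monomials. The paper compresses the normality analysis you outline in your second paragraph into a single chain of set equalities, asserting directly that the normal $q|_{\lbar{s}}$'s are exactly the DRB monomials with some interior factor in $\cala_d\cap C(\Delta_n X)$; your case analysis via Lemma~\mref{threecases} is precisely the content of that step, so you are simply filling in what the paper leaves implicit.
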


\begin{proof}
For any $s =\phi(u,v) \in S_n$, by Lemma \ref{lemma:leading term},
we have $\lbar{s} = 1\otimes \lbar{d(u_1)} \otimes w$, where
$\lbar{d(u_1)} \in \cala_{d} \cap C(\Delta_n X)$ and $w \in \calb(\Delta_n X)$. Recall
that
$$\calb(\Delta_n X) = \sqcup_{m\geq 1} C(\Delta_n X)^{\otimes m} =
\{a_1  \otimes \cdots\otimes a_m \mid a_1,\cdots,a_m \in C(\Delta_n
X),  m\geq 1 \}.$$ By Theorems \ref{CD lemma} and
\ref{GSbase}, and Lemmas \ref{lemma:decom} and \ref{decomposition}, we have
\begin{align*}
\mathrm{Irr(S_n)}&= \calb(\Delta_n X) \setminus \left\{ q|_{\overline{s}}
\mid q\in \calb^\star(\Delta_n X), s\in S_n , q|_{s}\text{ is normal}\right\}\\
&=\calb(\Delta_n X) \setminus \{ q|_{ 1\otimes \lbar{d(u_1)} \otimes
w} \mid q\in \calb^\star(\Delta_n X),\lbar{d(u_1)} \in \cala_{d} \cap C(\Delta_n X), w\in \calb(\Delta_n X) \}\\
&=\calb(\Delta_n X) \setminus \{ a_1 \otimes \cdots \otimes a_k
\in C(\Delta_n X)^{\otimes k} \mid a_i\in \cala_{d} \cap C(\Delta_n X) \text{ for some } 1<i<k, k\geq 1 \}\\
&= \{a_1 \otimes \cdots\otimes a_k \mid a_1, a_k \in
C(\Delta_n X), a_i \in \cala_{f} \cap C(\Delta_n X) \text{ for } 1<i<k, k\geq 1\}
\end{align*}
is a $\bfk$-basis of  $\bfk \calb(\Delta_n X) / I_{ID}$. Since $A_n
= \bfk C(\Delta_n X)$ and $A_{n,f} = \bfk \cala_f\cap C(\Delta_n X)$, the theorem follows.
\end{proof}

Let
\begin{equation}
S  := \left\{P(d(u) P(v))- uP(v)+ P(uv) + \lambda P(d(u) v) \big| u\in
\sha(\Delta_n X) \setminus P( \sha(\Delta_n X)), v\in \sha(\Delta
X) \right\}.
\mlabel{eq:gsid}
\end{equation}

\begin{lemma} \label{idealcompa}
Let $I_{ID,n}$ $($resp. $I_{ID}$$)$ be the differential Rota-Baxter ideal
of $\sha(\Delta_n X)$ $($resp. $\sha(\Delta X)$$)$ generated by
$S_n$ $($resp. $S$$)$. Then as $\bfk$-modules we have $I_{ID,1} \subseteq I_{ID,2} \subseteq \cdots
$, $ I_{ID} = \cup_{n\geq 1} I_{ID,n}$ and $I_{ID,n} = I_{ID} \cap
\bfk \sha(\Delta_n X)$. \mlabel{lemma:idealcompa}
\end{lemma}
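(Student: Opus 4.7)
The plan is to treat each $I_{ID,n}$ as a $\bfk$-submodule of $I_{ID}\subseteq \sha(\Delta X)$ via the canonical embedding $\sha(\Delta_n X)\hookrightarrow \sha(\Delta X)$ induced by $\calb(\Delta_n X)\subseteq \calb(\Delta X)$, and then to combine the ``generators'' description from Lemma~\mref{lemma:operator ideal} with the level-$n$ direct sum decompositions supplied by Theorem~\mref{PBW Base}.

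First I would verify the monotonicity $S_n\subseteq S_{n+1}\subseteq S$. The only point to check is that the restriction $u\notin P(\sha(\Delta_n X))$ survives passage to a larger level: but this condition just says the first tensor factor of $u$ is not $1$, which is preserved when $u$ is regarded inside $\sha(\Delta_{n+1} X)$ or $\sha(\Delta X)$. Together with $\frakC^\star(\Delta_n X)\subseteq \frakC^\star(\Delta_{n+1} X)\subseteq \frakC^\star(\Delta X)$ and Lemma~\mref{lemma:operator ideal}, this immediately yields the chain $I_{ID,1}\subseteq I_{ID,2}\subseteq \cdots \subseteq I_{ID}$. For $I_{ID}=\bigcup_{n\geq 1}I_{ID,n}$, only the forward inclusion needs attention: any $f\in I_{ID}$ is a finite sum $\sum_i c_i q_i|_{s_i}$, and the $q_i$ together with the defining data $u_i, v_i$ of $s_i=\phi(u_i,v_i)$ involve only finitely many derivation orders $x^{(k)}$; letting $N$ be their maximum gives $f\in I_{ID,N}$.

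The main claim is $I_{ID,n} = I_{ID}\cap \sha(\Delta_n X)$, for which only $\supseteq$ is nontrivial. Given $f\in I_{ID}\cap \sha(\Delta_n X)$, use the previous step to pick $N\geq n$ with $f\in I_{ID,N}$. By Theorem~\mref{PBW Base} there are direct sum decompositions
$$\sha(\Delta_n X) = \bfk\,\mathrm{Irr}(S_n)\oplus I_{ID,n}, \qquad \sha(\Delta_N X) = \bfk\,\mathrm{Irr}(S_N)\oplus I_{ID,N}.$$
The explicit characterization of the irreducibles from the proof of that theorem (tensor words whose outer factors lie in $C(\Delta_k X)$ and whose interior factors lie in $\cala_f\cap C(\Delta_k X)$) makes $\mathrm{Irr}(S_n)\subseteq \mathrm{Irr}(S_N)$ visible by inspection, and we already have $I_{ID,n}\subseteq I_{ID,N}$. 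Writing the level-$n$ decomposition $f = g + h$ with $g\in \bfk\,\mathrm{Irr}(S_n)$ and $h\in I_{ID,n}$, this can be reread as a level-$N$ decomposition of $f$; uniqueness of the latter, together with $f\in I_{ID,N}$, forces $g = 0$, so that $f = h\in I_{ID,n}$.

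The main obstacle is precisely this compatibility step, namely that the Gr\"obner-Shirshov decompositions at levels $n$ and $N$ agree on the common subspace. Everything rests on the inclusion $\mathrm{Irr}(S_n)\subseteq \mathrm{Irr}(S_N)$, which is visible only after one has in hand the explicit description of $\mathrm{Irr}(S_k)$ produced in the proof of Theorem~\mref{PBW Base}; once that is granted, the uniqueness of direct sum decompositions closes the argument cleanly.
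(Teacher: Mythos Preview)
Your proof is correct and follows essentially the same route as the paper: both invoke the direct sum decomposition $\sha(\Delta_k X)=\bfk\,\mathrm{Irr}(S_k)\oplus I_{ID,k}$ from Theorem~\mref{PBW Base}, use the inclusion $\mathrm{Irr}(S_n)\subseteq\mathrm{Irr}(S_N)$ (equivalently $J_n\subseteq J_N$), and then deduce $I_{ID,N}\cap\sha(\Delta_n X)=I_{ID,n}$. The only cosmetic difference is that the paper packages the final step via the modular law $I_{ID,N}\cap(J_n\oplus I_{ID,n})=(I_{ID,N}\cap J_n)\oplus I_{ID,n}$, whereas you phrase it as uniqueness of the level-$N$ decomposition.
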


\begin{proof}
Since $\bfk \sha(\Delta_n X) \subseteq \bfk \sha(\Delta_{n+1} X)$
for any $n{\geq 1}$, we have $I_{ID,1}
\subseteq I_{ID,2} \subseteq \cdots $ and $ I_{ID} = \cup_{n\geq 1}
I_{ID,n}$ by Eq.~(\ref{eq2}). We next show $I_{ID,n} = I_{ID}
\cap \sha(\Delta_n X)$. Obviously, $I_{ID,n} \subseteq I_{ID}
\cap \sha(\Delta_n X)$. So we only need to verify $I_{ID} \cap \sha(\Delta_n X) \subseteq I_{ID,n}$. By Theorem
\ref{PBW Base} we have

$$\sha(\Delta_n X) \cong
\left( A_n \oplus \left(\bigoplus_{k\geq 0} A_n\otimes A_{n,f}^{\otimes k}
\otimes A_n \right)\right) \oplus I_{ID,n}.$$ Let $$J_n := A_n \oplus
\left(\bigoplus_{k\geq 0} A_n\otimes A_{n,f}^{\otimes k} \otimes A_n \right).$$
Then $\sha(\Delta_n X) = J_n \oplus I_{ID,n}$ and $J_1
\subseteq J_2 \subseteq \cdots $. Let $n,k{\geq
1}$. Since $J_{n+k} \cap I_{ID,n+k} = 0$ and $J_{n} \subseteq J_{n+k}$, we have $J_{n}\cap I_{ID,n+k} = 0$. Since $I_{ID,n}
\subseteq I_{ID,n+k}$, by modular law we have
\begin{equation}I_{ID,n+k}
\cap \sha(\Delta_n X) = I_{ID,n+k} \cap (J_n \oplus I_{ID,n})
= (I_{ID,n+k}\cap J_n) \oplus I_{ID,n} = I_{ID,n}.
\label{eq15}
\end{equation}

Let $u\in I_{ID} \cap \sha(\Delta_n X)$. By $I_{ID}
=\cup_{n\geq 1} I_{ID,n}$, we have $u\in I_{ID,N}$ for some $N \in
\mathbb{Z}_{\geq 1}$. If $N\geq n$, by Eq.~(\ref{eq15}), $u \in
I_{ID,N} \cap \sha(\Delta_n X) = I_{ID,n}$. If $N<n$, then
$u\in I_{ID,N} \subseteq I_{ID,n}$. Hence $I_{ID} \cap \sha(\Delta_n X) \subseteq I_{ID,n}$ and so $I_{ID} \cap\sha(\Delta_n X) = I_{ID,n}$.
\end{proof}

Now we are ready to prove the main result of this paper.

\begin{theorem} \mlabel{thm:gsb}
$($=Theorem~\mref{thm:gsb0}$)$
Let $X$ be a nonempty well-ordered set. Let $\sha(\bfk\{X\})=\sha(\Delta X)$ be the free commutative differential Rota-Baxter algebra on $X$. Let $I_{ID}$ be the ideal of $\sha(\bfk\{X\})$ generated by $S$ defined in Eq.~(\mref{eq:gsid}). Then the composition
$$\sha(A)_f:=A \oplus \left(\bigoplus_{k\geq 0} A\otimes A_{f}^{\otimes k} \otimes A \right) \hookrightarrow  \sha(A) \to \sha(A) / I_{ID}$$
of the inclusion and the quotient map is a linear isomorphism. In other words,
$$\sha(A)=\sha(A)_f\oplus I_{ID}.$$
\end{theorem}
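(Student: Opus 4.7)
The plan is to reduce this to the order-$n$ decomposition already established in Theorem~\mref{PBW Base}, via two successive direct-limit arguments: first in the derivation-order filtration with $X$ finite, then over the directed system of finite subsets of $X$ in general.

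Assume first that $X$ is finite, so that Theorem~\mref{PBW Base} applies and yields $\sha(\bfk[\Delta_n X]) = J_n \oplus I_{ID,n}$, where
\[
J_n := A_n \oplus \bigoplus_{k \geq 0} A_n \otimes A_{n,f}^{\otimes k} \otimes A_n.
\]
Since $A_n \subseteq A_{n+1}$ and $A_{n,f} \subseteq A_{n+1,f}$, we have $J_n \subseteq J_{n+1}$; by construction $I_{ID,n} \subseteq I_{ID,n+1}$. Lemma~\mref{lemma:idealcompa} supplies $I_{ID} = \bigcup_n I_{ID,n}$, which together with $\sha(A) = \bigcup_n \sha(\bfk[\Delta_n X])$ and $\sha(A)_f = \bigcup_n J_n$ (the latter from $A_f = \bigcup_n A_{n,f}$) gives $\sha(A) = \sha(A)_f + I_{ID}$. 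Disjointness follows since any $u \in \sha(A)_f \cap I_{ID}$ is a finite sum, hence lies in $J_N \cap I_{ID,N} = 0$ for $N$ large enough.

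For general well-ordered $X$, I would reduce to the finite case via the directed system of finite subsets $Y \subseteq X$. The critical intermediate step is the ideal-compatibility identity
\[
\sha(\bfk\{Y\}) \cap I_{ID} = I_{ID,Y},
\]
where $I_{ID,Y}$ is the defining ideal of the free integro-differential algebra on $Y$. One inclusion is immediate; for the reverse, use the retraction $\pi_Y : \sha(\bfk\{X\}) \to \sha(\bfk\{Y\})$ induced by the differential-algebra surjection $\bfk\{X\} \to \bfk\{Y\}$ that sends $y \mapsto y$ for $y \in Y$ and $x \mapsto 0$ for $x \in X \setminus Y$. Functoriality of the mixable shuffle construction makes $\pi_Y$ a differential Rota-Baxter algebra homomorphism; it restricts to the identity on $\sha(\bfk\{Y\})$, and since it commutes with $d$ and $P$ it sends each generator $\phi(u,v)$ of $I_{ID}$ to $\phi(\pi_Y u, \pi_Y v) \in I_{ID,Y}$. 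Hence any $u \in \sha(\bfk\{Y\}) \cap I_{ID}$ satisfies $u = \pi_Y(u) \in I_{ID,Y}$.

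Given this compatibility, the decomposition for general $X$ assembles as follows. Any $u \in \sha(A)$ lies in some $\sha(\bfk\{Y\})$ with $Y$ finite, so the finite case gives $u = v + w$ with $v \in \sha(\bfk\{Y\})_f \subseteq \sha(A)_f$ and $w \in I_{ID,Y} \subseteq I_{ID}$. Conversely, any $u \in \sha(A)_f \cap I_{ID}$ lies in $\sha(\bfk\{Y\})_f$ for some finite $Y$ and, by the intersection identity, also in $I_{ID,Y}$; the finite case then forces $u = 0$. The main obstacle is the intersection identity: an expression $\sum_i c_i q_i|_{s_i}$ witnessing $u \in I_{ID}$ can freely involve generators outside $Y$, so the inclusion cannot be obtained by direct substitution, and the retraction $\pi_Y$ is what converts such a presentation into one supported inside $Y$ without leaving $I_{ID,Y}$.
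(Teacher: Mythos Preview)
Your proof is correct. The finite-$X$ step is essentially the paper's argument: both invoke Theorem~\mref{PBW Base} and Lemma~\mref{lemma:idealcompa} and pass to the direct limit over $n$.

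For general $X$ you take a genuinely different route. The paper never proves the intersection identity $\sha(\bfk\{Y\})\cap I_{ID}=I_{ID,Y}$ for a fixed finite $Y$; instead it chooses $Y$ from the ideal side. Given $0\neq u\in I_{ID}$, any presentation $u=\sum_i c_i q_i|_{s_i}$ involves only finitely many letters of $X$, so one may simply take $Y$ large enough that this presentation already lies in $\sha(\bfk\{Y\})$, whence $u\in I_{Y,ID}$ directly. The paper then uses the monomial-level compatibility $\sha(A_Y)\cap\sha(A_X)_f=\sha(A_Y)_f$ (which follows from $A_Y\cap A_{X,f}=A_{Y,f}$) to conclude $u\notin\sha(A_X)_f$. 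Your retraction $\pi_Y$ is not needed for this, and what you flag as ``the main obstacle'' dissolves once $Y$ is chosen from the ideal presentation rather than from the element $u$ itself.

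That said, your retraction argument is sound and buys something extra: it establishes $\sha(\bfk\{Y\})\cap I_{ID}=I_{ID,Y}$ for \emph{every} finite $Y$, i.e.\ that the free commutative integro-differential algebra on $Y$ embeds in that on $X$. The paper's shortcut yields only the weaker statement that each element of $I_{ID}$ lies in some $I_{Y,ID}$. So your approach is slightly more informative, while the paper's is more economical.
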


\begin{proof}
First assume that $X$ is finite. Denote $A=\bfk[\Delta X]$ and $A_n=\bfk[\Delta_n X], n\geq 1$. By Theorem \ref{PBW Base} and Lemma \ref{idealcompa} we have the linear isomorphisms
$$A_n \oplus \left(\bigoplus_{k\geq 0} A_n\otimes A_{n,f}^{\otimes k} \otimes A_n \right) \cong  \sha(\Delta_n X) / I_{ID,n} =  \sha(\Delta_n X) / (I_{ID}
\cap \sha(\Delta_n X) ) \cong (\sha(\Delta_n X) +
I_{ID}) / I_{ID}$$
that are compatible with the direct system $\dirlim A_n$. Since $A=\dirlim A_n$ as $\bfk$-module, we have
$$ A \oplus \left(\bigoplus_{k\geq 0} A\otimes A_{f}^{\otimes k} \otimes A \right) =\dirlim \left(A_n \oplus \left(\bigoplus_{k\geq 0} A_n\otimes A_{n,f}^{\otimes k} \otimes A_n \right)\right)
\cong \dirlim ((\sha(\Delta_n X) + I_{ID})
/ I_{ID}) = \sha(A) / I_{ID}.$$

Now let $X$ be a nonempty well-ordered set. Let $Y$ be a finite subset of $X$. Denote $A_{X,f}=A_f, A_{Y,f}=\bfk\{Y\}_f$.
Then by the definition of $A_f$ we have
\begin{equation}
A_Y\cap A_{X,f}=A_{Y,f} \ \text{ and }\ d_X(A_Y)=d_Y(A_Y).
\mlabel{eq:fy}
\end{equation}

Let $a\in A_X$. Then there is a finite $Y\subseteq X$ such that $a\in A_Y$. Thus by Proposition~\mref{functional}, we have $a\in A_{Y,f}+d_X(A_Y)$ which is contained in $A_{X,f}+d_X(A_X)$ by Eq.~(\mref{eq:fy}). Thus $A_X=A_{X,f}+d_X(A_X)$.
On the other hand, let $0\neq a\in d_X(A_X)$. Then $a=d_X(b)$ for $b\in A_X$. Then there is a finite $Y\subseteq X$ such that $b\in A_Y$ and hence $a\in d_Y(A_Y)$. Then by Proposition~\mref{functional} and Eq.~(\mref{eq:fy}), we have $a\notin A_{Y,f}=A_Y\cap A_{X,f}$. Hence $a\notin A_{X,f}$. This proves $A_{X,f}\cap d_X(A_X)=0$. Hence $A_X=A_{X,f}\oplus d_X(A_X)$.

Now let $u\in \sha(A_X)$. Then there is a finite subset $Y\subseteq X$ such that $a\in \sha(A_Y)$. Then by the case of finite sets proved above, $u\in \sha(A_Y)_f + I_{Y,ID}$. By definition, $\sha(A_Y)_f\subseteq \sha(A)_f$ and $I_{Y,ID} \subseteq I_{ID}$. Hence $u\in \sha(A)_f + I_{ID}$. Further, if $0\neq u\in I_{ID}$, then there is a finite $Y\subseteq X$ such that $u\in I_{Y,ID}$. Thus $u$ is not in $\sha(A_Y)_f$ since $\sha(A_Y)_f\cap I_{Y,ID}=0$.
By the definition of $\sha(A_X)_f$, we have $\sha(A_Y)\cap \sha(A_X)_f = \sha(A_Y)_f$. Therefore $u$ is not in $\sha(A_X)_f$. This proves $\sha(A_X)=\sha(A_X)_f \oplus I_{X,ID}$\,.
\end{proof}

\smallskip

\noindent
{\bf Acknowledgements}:
Xing Gao thanks support from NSFC grant 11201201. Li Guo acknowledges support from NSF grant DMS~1001855.

\end{document}